\newtheorem{theorem}{Theorem}[section]
\newtheorem{corollary}[theorem]{Corollary}
\newtheorem{lemma}[theorem]{Lemma}
\newtheorem{proposition}[theorem]{Proposition}
\theoremstyle{definition}
\theoremstyle{remark}
\newtheorem{example}[theorem]{Example}
\newtheorem{remark}[theorem]{Remark}
\newtheorem{question}[theorem]{Question}
\DeclareSymbolFont{largeletters}{OT1}{cmr}{m}{n}
\DeclareMathSymbol{\lcolon}{\mathopen}{operators}{`:}
\DeclareMathSymbol{\rcolon}{\mathclose}{operators}{`:}
\DeclareMathSymbol{\symm}{\mathop}{largeletters}{`S}
\DeclareMathSymbol{\inner}{\mathbin}{AMSa}{"79}
\DeclareMathOperator{\tr}{tr}
\DeclareMathSymbol{\symm}{\mathop}{largeletters}{`S}
\DeclareMathSymbol{\inner}{\mathbin}{AMSa}{"79}
\DeclareMathSymbol{\juxta}{\mathbin}{operators}{`\#}
\DeclareMathSymbol{\astrut}{\mathalpha}{letters}{"5F}
\DeclareMathSymbol{\astar}{\mathalpha}{symbols}{"03}
\DeclareMathSymbol{\orint}{\mathalpha}{symbols}{"22}
\DeclareMathSymbol{\orcir}{\mathalpha}{AMSa}{"09}
\newcommand{\sah}{a}
\newcommand{\sahi}{a^{-1}}
\newcommand{\ch}{\operatorname{ch}}
\newcommand{\dual}{\spcheck}
\newcommand{\End}{\mathrm{End}}
\newcommand{\Set}[1]{\left\{#1\right\}}
\newcommand{\id}{\mathrm{id}}
\newcommand{\SG}[1]{\mathfrak{S}_{#1}}
\newcommand{\sheaf}[1]{\mathcal #1}
\newcommand{\Hom}{\mathrm{Hom}}
\newcommand{\td}{\operatorname{td}}
\newcommand{\Ext}{\mathrm{Ext}}
\newcommand{\HH}{\mathrm{HH}}
\newcommand{\HT}{\mathrm{HT}}
\newcommand{\HO}{\mathrm{H\Omega}}
\newcommand{\DC}{\mathcal{D}}
\newcommand{\DCb}{\DC^{\mathrm \flat}}
\newcommand{\HKR}{\mathrm{HKR}}
\newcommand{\Ko}{\mathrm K}
\newcommand{\Alb}{\mathrm{Alb}}
\newcommand{\LieG}[1]{\mathrm{#1}}
\newcommand{\Sp}{\LieG{Sp}}
\newcommand{\Sym}{\mathrm S}
\newcommand{\HILB}{\mathrm{Hilb}}
\newcommand{\CC}{\mathbb{C}}
\newcommand{\qqed}{\hspace*{\fill}$\Box$}
\newcommand{\rk}{{\rm rk}}
\title{Remarks on derived equivalences of Ricci-flat manifolds}
\author{Daniel Huybrechts and Marc Nieper-Wi\ss kirchen}
\date{\today}
\begin{document}

    \begin{abstract}
        After a finite \'etale cover, any Ricci-flat K\"ahler manifold decomposes into
        a product of complex tori, irreducible holomorphic symplectic manifolds, and Calabi--Yau
        manifolds. We present results indicating that this decomposition is an invariant
        of the derived category. The main idea to distinguish the
        derived category of an irreducible holomorphic symplectic
        manifold from that of a Calabi--Yau manifold is that point
        sheaves do not deform in certain (non-commutative)
        deformations of the former, whereas they do for the
        latter. On the way, we prove a conjecture of C\u{a}ld\u{a}raru
        on the module structure of the
        Hochschild--Kostant--Rosenberg isomorphism for manifolds
        with trivial canonical bundle as a direct consequence of
        recent work by  Calaque,  van den Bergh, and
        Ramadoss.
    \end{abstract}

    \maketitle

    A \emph{Calabi--Yau manifold} $Z$ is a simply-connected compact K\"ahler manifold with trivial canonical
    bundle and $H^0(Z, \Omega^i_Z) = 0$ for all $0 < i < \dim Z$. Note that any Calabi--Yau manifold of dimension at least
    three is projective.

    A \emph{holomorphic symplectic manifold $Y$} is a compact K\"ahler manifold
    which admits an everywhere non-degenerate holomorphic two-form. The canonical
    bundle of such a
    $Y$ is again trivial. We call $Y$ \emph{irreducible}
    if it is in addition simply-connected and the holomorphic
    two-form is unique up to scaling. A projective holomorphic symplectic manifold will simply
    be called \emph{projective symplectic}.

    We recall the following classical decomposition theorem for Ricci-flat mani\-folds (see
    \cite{beauville:Progr,beauville:kummer}).

    \begin{theorem}
        \label{thm:decomp}
        Let $X$ be a compact K\"ahler manifold with $0 = c_1(X) \in H^2(X, {\mathbb R})$.
        Then there exists a finite \'etale cover $\widehat X \to X$ such that $\widehat X$ itself
        decomposes as
        \[
            \widehat X \cong \prod_i A_i \times \prod_j Y_j \times \prod_k Z_k,
        \]
        where the $A_i$ are simple complex tori, the $Y_j$ are irreducible holomorphic symplectic, and the $Z_k$
        are Calabi--Yau varieties of dimension at least three.
    \end{theorem}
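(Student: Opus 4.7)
The plan is to follow the classical strategy of Bogomolov and Beauville, combining Yau's solution of the Calabi conjecture, the de Rham / Cheeger--Gromoll decomposition of a complete Ricci-flat manifold, Berger's holonomy classification, and a Bieberbach-type descent argument. First, since $c_1(X) = 0$ in $H^2(X,\mathbb{R})$, Yau's theorem supplies a Ricci-flat K\"ahler metric $g$ in any given K\"ahler class on $X$. Pulling $g$ back to the universal cover $\widetilde X \to X$ gives a complete, simply-connected, Ricci-flat K\"ahler manifold on which $\Gamma := \pi_1(X)$ acts by holomorphic isometries with compact quotient.

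Next, I would apply the Cheeger--Gromoll splitting theorem to extract the flat factor and then the de Rham decomposition theorem to split the remaining part according to the action of the restricted holonomy. This yields an isometric and holomorphic decomposition
\[
\widetilde X \cong \mathbb{C}^m \times \prod_i \widetilde M_i,
\]
in which each $\widetilde M_i$ is a simply-connected, compact, Ricci-flat K\"ahler manifold whose holonomy representation on the tangent space is irreducible; compactness of the non-flat factors is forced by the cocompactness of the $\Gamma$-action on $\widetilde X$. Berger's classification then shows that each $\widetilde M_i$ has holonomy either $\mathrm{SU}(n_i)$ or $\mathrm{Sp}(n_i/2)$. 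In the first case, the only parallel holomorphic forms are multiples of the holomorphic volume form, so $H^0(\widetilde M_i,\Omega^p_{\widetilde M_i}) = 0$ for $0 < p < n_i$ and $\widetilde M_i$ is a Calabi--Yau manifold. In the second, the holonomy preserves a parallel non-degenerate $(2,0)$-form, unique up to scaling, identifying $\widetilde M_i$ as an irreducible holomorphic symplectic manifold.

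Finally, I would descend to a finite \'etale cover of $X$. Since the de Rham decomposition is canonical, $\Gamma$ permutes the factors $\widetilde M_i$ among those isomorphic to one another and acts on $\mathbb{C}^m$ by complex-affine isometries. Cocompactness forces the linear parts of the induced action on $\mathbb{C}^m$ to lie in a finite subgroup of $\mathrm{U}(m)$, and the permutation action on the indexing set of the $\widetilde M_i$ is obviously finite. The kernel $\Gamma' \subset \Gamma$ of the resulting map to this finite group is of finite index; it preserves each non-flat factor individually and acts on $\mathbb{C}^m$ by translations. The corresponding finite \'etale cover $\widehat X = \widetilde X/\Gamma'$ therefore splits as the product of a complex torus and the $\widetilde M_i$. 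A further isogeny decomposes the torus factor into simple subtori, and any two-dimensional Calabi--Yau factors (automatically K3 surfaces) are absorbed into the irreducible holomorphic symplectic group, yielding the stated form.

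The hard part will be the Bieberbach-type descent in the last step: upgrading the de Rham splitting of $\widetilde X$ to a product structure on a finite \'etale cover of $X$. This depends crucially on the cocompactness of $\Gamma$ in the isometry group of $\widetilde X$, which is what makes both the linear parts of the action on the flat factor and the permutation of isomorphic irreducible factors finite. The remaining ingredients (Yau's theorem, the Cheeger--Gromoll and de Rham splittings, and Berger's list) enter more or less as black boxes.
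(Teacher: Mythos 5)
The paper does not actually prove this statement: it is recalled as the classical Bogomolov--Beauville decomposition theorem, with references to Beauville's papers. Your sketch follows exactly the route taken in those references --- Yau's theorem, the Cheeger--Gromoll and de Rham splittings of the universal cover, Berger's holonomy list to identify the irreducible compact factors as Calabi--Yau or irreducible holomorphic symplectic, and a Bieberbach-type descent --- and all of those ingredients are correctly deployed.

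There is, however, one genuine gap in the descent step as you have written it. Killing the finite permutation group of the factors and the finite group of linear parts on $\mathbb{C}^m$ produces a finite-index subgroup $\Gamma'$ that acts on $\mathbb{C}^m$ by translations and \emph{preserves} each compact factor $\widetilde M_i$, but it may still act on each $\widetilde M_i$ by a nontrivial holomorphic isometry; in that case $\widetilde X/\Gamma'$ is only a fibre bundle over a torus with fibre $\prod_i \widetilde M_i$ (exactly as for bielliptic surfaces), not a product. The missing ingredient is that the group of holomorphic isometries of each compact irreducible factor is \emph{finite}: it is compact because $\widetilde M_i$ is a compact Riemannian manifold, and discrete because $H^0(\widetilde M_i, \sheaf T_{\widetilde M_i}) = 0$ for holonomy $\SU(n_i)$ or $\Sp(n_i/2)$ (holomorphic vector fields are parallel by the Bochner principle, and the standard representations of these groups have no invariant vectors). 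Hence the image of $\Gamma$ in $\prod_i \Aut(\widetilde M_i)$ is finite, and only after also passing to the kernel of this map does one obtain a finite-index subgroup acting trivially on the compact part, so that the quotient splits as a product; Bieberbach's theorem applied to the residual, now properly discontinuous and cocompact, action on $\mathbb{C}^m$ then finishes the argument. With this correction your proof is the standard one.
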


    We would like to answer the following:
    \begin{question}
        \label{qu:general}
        Suppose $X$ and $X'$ are two smooth complex projective varieties with $c_1(X) = 0$ and $c_1(X') = 0$
        such that there exists an exact equivalence
        \[
            \DCb(X) \cong \DCb(X')
        \]
        between their bounded derived categories of coherent sheaves. Are then the symplectic factors and the
        Calabi--Yau factors of $X$ and $X'$ also derived equivalent, i.e.~
        \[
            \DCb(Y_j) \cong \DCb(Y'_{\sigma(j)})\quad\text{and}\quad\DCb(Z_k) \cong \DCb(Z'_{\tau(k)})
        \]
        for suitable permutations $\sigma$ and $\tau$?
    \end{question}
As only for projective manifolds the (derived) category of
coherent sheaves encodes enough relevant geometric information, we
restrict ourselves to this case. Clearly, in the theorem the
manifold $X$ is projective if and only if all the factors $A_i$,
$Y_j$, and $Z_k$ are projective.

To be more specific, one could first consider the following case:
Suppose $Y, Y'$ and $Z, Z'$ are irreducible projective symplectic
manifolds and Calabi--Yau manifolds, respectively. Suppose further
that there exists an exact equivalence $\DCb(Y \times Z) \cong
\DCb(Y' \times Z')$. Does this imply $\DCb(Y) \cong \DCb(Y')$ and
    $\DCb(Z) \cong \DCb(Z')$?

\begin{remark}
Allowing \'etale covers makes things more subtle. E.g.\ for $A$ an
abelian surface, one knows that the Hilbert scheme $\HILB^3(A)$ of
three points on $A$ is derived equivalent to the Hilbert scheme
$\HILB^3(\hat A)$, where $\hat A$ denotes the dual abelian surface
(see \cite{ploog:thesis}).

After an \'etale cover $\HILB^3(A)$ is isomorphic to $A \times K^2
(A)$ and similarly for $\hat A$. Here, $K^2 (A)$ denotes the
$4$-dimensional generalised Kummer variety of $A$. However, it is
unknown whether $K^2 (A)$ and $K^2(\hat A)$ are derived equivalent
(see \cite{namikawa}).
\end{remark}

So far, we are only able to show that derived equivalence does
distinguish between the three types in the decomposition theorem.
More precisely, we prove the following:
\begin{theorem}
Let $X$ and $X'$ be smooth projective varieties with equivalent
derived categories, i.e.\ there exists an exact equivalence
$\DCb(X) \cong \DCb(X')$. If $X$ is either an abelian variety or
an irreducible projective symplectic manifold, then $X'$ is of the
same type.
\end{theorem}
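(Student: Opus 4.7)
The plan is to extract from the derived equivalence $\DCb(X)\cong\DCb(X')$ enough Hochschild-theoretic data to constrain the Beauville decomposition of $X'$ provided by Theorem~\ref{thm:decomp}. Since $\omega_X$ is trivial, the Serre functor of $\DCb(X)$ is the shift by $\dim X$, and this transfers to $\DCb(X')$; hence $\omega_{X'}$ is trivial and $\dim X'=\dim X$. Applying Theorem~\ref{thm:decomp} to $X'$ yields a finite étale cover $\pi\colon\widehat{X'}\to X'$ with $\widehat{X'}\cong\prod_i A_i\times\prod_j Y_j\times\prod_k Z_k$ and deck group $G$; the task is to rule out the wrong factor types.

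For the abelian case, the decisive invariant is first Hochschild cohomology, which by HKR splits as $\HH^1(X)=H^0(X,T_X)\oplus H^1(X,\sheaf O_X)$. If $X$ is abelian of dimension $n$, both summands have dimension $n$, so $\dim\HH^1(X)=2n$; by derived invariance, the same equality holds for $X'$. Now $H^0(T)$ and $H^1(\sheaf O)$ vanish on every irreducible symplectic and Calabi--Yau factor, so Künneth gives each of $H^0(T_{\widehat{X'}})$ and $H^1(\sheaf O_{\widehat{X'}})$ of dimension $\sum_i\dim A_i$, and passage to $G$-invariants only decreases these. Thus $2n=\dim\HH^1(X')\le 2\sum_i\dim A_i\le 2\dim X'=2n$, and equality throughout forces $\widehat{X'}$ to be abelian and $h^{1,0}(X')=\dim X'$. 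The Albanese morphism $X'\to\Alb(X')$ is then a surjection between smooth projective varieties of the same dimension, and triviality of $\omega_{X'}$ makes it étale; it is therefore a finite étale cover of an abelian variety, so $X'$ itself is abelian.

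For the irreducible symplectic case, the same $\HH^1$ analysis (now giving $\HH^1(X)=0$) eliminates abelian factors from $\widehat{X'}$, so $\widehat{X'}$ is a product of irreducible symplectic and Calabi--Yau pieces. To exclude Calabi--Yau factors one invokes the C\u{a}ld\u{a}raru-type refinement of HKR proved in the paper, which makes the summand $H^0(X,\wedge^2 T_X)\subset\HH^2(X)$ intrinsic to the derived category via its module action on $\HH_\ast$. Its geometric characterisation is that a class in $\HH^2$ lies in $H^0(\wedge^2 T)$ precisely when the associated first-order non-commutative deformation of $\DCb(X)$ obstructs every point sheaf. For an irreducible symplectic $Y$ this summand is one-dimensional, spanned by the Poisson bivector inverse to the symplectic form, and each $k(y)$ is indeed obstructed; for a Calabi--Yau $Z$ of dimension at least three, $H^0(\wedge^2 T_Z)=0$ and no such obstruction arises on the Calabi--Yau side of a product. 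Matching these derived-invariant data therefore forces $\widehat{X'}$ to contain no Calabi--Yau factor; combined with further invariants accessible through the same HKR-module structure (notably $h^{2,0}(X)=1$ and simple-connectedness), this pins down $X'$ as a single irreducible symplectic manifold.

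The main obstacle is the symplectic step. Ordinary HKR is only an additive isomorphism, so individual summands of $\HH^\ast$ are not a priori derived invariant; isolating $H^0(\wedge^2 T)$ requires both the module-level compatibility of HKR (C\u{a}ld\u{a}raru's conjecture, proved here following Calaque--van den Bergh--Ramadoss) and the geometric criterion via non-deformability of point sheaves in non-commutative directions. The abelian case, by contrast, needs only the additive HKR decomposition and reduces to a dimension count.
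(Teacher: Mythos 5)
Your abelian case is essentially the paper's Proposition~\ref{mainprop:AV}: the $\HH^1$ dimension count against the Beauville cover is identical. You finish by asserting that the Albanese map $X'\to\Alb(X')$ is a surjection which triviality of $\omega_{X'}$ makes \'etale; this needs justification (surjectivity of the Albanese map for $\kappa=0$ is a nontrivial theorem), whereas the paper gets \'etaleness more cheaply from functoriality of the Albanese applied to the composite $B\to X'\to\Alb(X')$, which is a morphism of abelian varieties of equal dimension inducing an isomorphism on $H^1(\sheaf O)$. This part is acceptable.

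The symplectic case has a genuine gap. First, your proposed characterisation --- a class of $\HH^2$ lies in $H^0(X,\bigwedge^2\sheaf T_X)$ precisely when it obstructs every point sheaf --- is false as stated: a class $v_1+v_2$ with $v_1\in H^1(\sheaf T)$ and $v_2\in H^0(\bigwedge^2\sheaf T)$ nowhere degenerate also obstructs every $k(x)$, while a bivector with zeros does not; and even a corrected version would only detect whether $H^0(\bigwedge^2\sheaf T_{X'})\neq 0$, which cannot exclude a Calabi--Yau factor from a product $Y_1\times Z_1$, since such a product still carries a nonzero bivector. Second, after excluding abelian and Calabi--Yau factors you still must show $X'$ is a \emph{single, simply connected} symplectic manifold rather than a product or an \'etale quotient; you declare $h^{2,0}(X')=1$ and simple-connectedness ``accessible through the HKR-module structure,'' but Hodge numbers and $\pi_1$ are not known derived invariants, and this is exactly where the paper needs Lemma~\ref{lem:qform} (Verbitsky's theorem transporting the Beauville--Bogomolov relation $\alpha^{n+1}=0$ for $q(\alpha)=0$ into $\Sym\HH^2(X')$) together with the holonomy argument of Proposition~\ref{prop:ihs}. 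Finally, the paper's actual mechanism is absent from your sketch: assuming $X'$ carries no symplectic form, Lemma~\ref{lem:qform} gives $H^0(\bigwedge^2\sheaf T_{X'})=0$, whence $\ch(k(x))\in R(X')$ by Proposition~\ref{prop:cy}; the module compatibility of $I_\Ko$ (Lemma~\ref{lem:IKAR} or \ref{lem:IKARweak}) transports this to a class $\gamma\in R(Y)$, which Proposition~\ref{prop:deforming} places in $H^n(Y,\Omega_Y^n)$, primitive on a hyper-K\"ahler rotation; the Hodge--Riemann relations then force $\int_Y\gamma\wedge\gamma\neq 0$, contradicting $\chi(k(x),k(x))=0$. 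Without this Mukai-pairing contradiction the argument does not close.
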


The result is proved by using particular algebraic properties of
Hochschild (co)homology which are invariant under derived
equivalence. Following general principles, abelian varieties and
symplectic varieties will be studied by means of Hochschild
cohomology of degree one (see Proposition \ref{mainprop:AV})
respectively degree two (see Theorem \ref{mainthm:HK}).

\medskip

On the way, we observe that a conjecture of C\u{a}ld\u{a}raru
\cite{caldararu:mukai2} can in the case of mani\-folds with
trivial canonical bundle rather easily be proved  by combining the
recent articles of van den Bergh and Calaque \cite{vdbergh} and
Ramadoss \cite{ramadoss}. C\u{a}ld\u{a}raru's conjecture asserts
 that Hochschild homology and Dolbeault cohomology are
isomorphic as modules over Hochschild cohomology via the modified
Hochschild--Kostant--Rosenberg isomorphism
$I^K:\HH^*(X)\stackrel\sim\to \HT^*(X)$, which is proved to be a
ring isomorphism  in \cite{vdbergh} (confirming a claim of
Kontsevich in \cite{kontsevich}). Putting things together one
obtains (see Corollary \ref{cor:Calconj}):

\begin{theorem}
Suppose $X$ is a manifold with trivial canonical bundle. Then the
modified Hochschild--Kostant--Rosenberg isomorphism defines an
isomorphism
$$(I^\Ko,I_\Ko):(\HH^*(X),\HH_*(X))\stackrel\sim\to
(\HT^*(X),\HO_*(X))$$ compatible with the ring structure of
$\HH^*(X)$ and $\HT^*(X)$ and the module structure of $\HH_*(X)$
and $\HO_*(X)$.
\end{theorem}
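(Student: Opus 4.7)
The plan is to deduce the theorem directly from the two recent results recalled in the excerpt, namely the Calaque--van den Bergh theorem giving the ring isomorphism assertion for $I^\Ko$, and Ramadoss's compatibility theorem relating the Hochschild-cohomology action on Hochschild homology to its HKR counterpart up to a Todd-class correction.

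First I would set up the relevant isomorphisms explicitly. Both modified HKR maps arise from the classical HKR isomorphisms by twisting with $\sqrt{\td_X}$: concretely, $I^\Ko = I^{\HKR} \circ (\sqrt{\td_X}\wedge -)$ on polyvector fields and $I_\Ko = I_{\HKR} \circ (\sqrt{\td_X}\wedge -)$ on Dolbeault forms. The ring isomorphism statement for $I^\Ko$ is then precisely the Kontsevich--Calaque--van den Bergh theorem and requires nothing further; the entire substance of the corollary lies in the module compatibility of $I_\Ko$.

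For that compatibility I would invoke Ramadoss's theorem, which states that the unmodified pair $(I^{\HKR}, I_{\HKR})$ intertwines the Hochschild-cohomology action on homology with the contraction action on Dolbeault cohomology only up to an explicit correction term involving $\td_X$. Passing from $(I^{\HKR}, I_{\HKR})$ to $(I^\Ko, I_\Ko)$ inserts the same factor $\sqrt{\td_X}$ on both the algebra side and the module side, and a direct book-keeping shows that this combined insertion precisely cancels Ramadoss's correction, leaving the desired module-algebra compatibility. In effect, the statement is proved by writing out the two cited theorems side by side and observing that the Todd-class twists compose to the identity modification.

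The hypothesis that the canonical bundle is trivial enters only to guarantee that no further Serre-duality or normalisation ambiguity intervenes; in particular the Mukai-style pairings that figure in Ramadoss's formulation behave rigidly rather than only projectively, so that the $\sqrt{\td_X}$ factors appearing on the ring and the module sides are literally the same element and not merely equal up to a scalar. The main — essentially the only — obstacle is therefore careful bookkeeping of conventions: the choice of the square root of $\td_X$, the sign of the contraction pairing, and the identification of $\HH^*(X)$ and $\HH_*(X)$ with $\HT^*(X)$ and $\HO_*(X)$ in a way that is consistent on both factors, so that the Todd-class corrections being cancelled are genuinely identified rather than merely matched up to a unit.
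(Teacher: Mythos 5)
There is a genuine gap. Your plan rests on the premise that Ramadoss proves the module compatibility of the \emph{unmodified} pair $(I^{\HKR},I_{\HKR})$ up to an explicit Todd correction, so that inserting $\sqrt{\td_X}$ on both sides cancels it formally. That is not what Ramadoss's theorem says, and no such formal cancellation is available. The input from \cite{ramadoss} used in the paper is the comparison $\widetilde I_\HKR=\td(X)\wedge I_\HKR$ between the \emph{two} HKR isomorphisms on Hochschild homology (the one defined via the adjunction $\iota_!\dashv\iota^*$ in the second variable and the one defined via $\iota^*\dashv\iota_*$ in the first variable); it is a statement comparing two identifications of $\HH_*(X)$ with $\HO_*(X)$, not a statement about the $\HH^*(X)$-action. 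Had Ramadoss proved the module statement up to a Todd twist, C\u{a}ld\u{a}raru's conjecture would already have been settled in general; the entire content of Proposition~\ref{prop:module} is to extract the module compatibility from this weaker input. Moreover, the "book-keeping" you invoke would require an identity of the shape $(a^{-1}\inner w)\inner(a\wedge\beta)=a\wedge(w\inner\beta)$ for arbitrary $\beta\in\HO_*(X)$, and this is false in general: the exchange $(\beta'\inner w)\inner\beta=\pm\,\beta'\wedge(w\inner\beta)$ of Lemma~\ref{lem:innermax} holds only when $\beta$ has maximal holomorphic degree.

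This is why the paper's proof is structured as it is: one first proves commutativity of the module diagram only for classes $\alpha$ with $I_\Ko(\alpha)\in H^i(X,\Omega_X^n)$ — using the functoriality of $I(\sheaf F)$ to get $\widetilde I_\HKR(v\cap\iota_*\beta)=I^\HKR(v)\inner\beta$, Ramadoss's comparison to replace $\widetilde I_\HKR$ by $I_\HKR$ on the top holomorphic degree, and Lemma~\ref{lem:innermax} to trade the contraction twist $a^{-1}\inner\cdot$ for the wedge twist $a\wedge\cdot$ — and then bootstraps to the $\HH^*(X)$-submodule generated by such classes using the multiplicativity of $I^\Ko$ (Calaque--van den Bergh). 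You have also mislocated the role of the hypothesis $\omega_X\cong\sheaf O_X$: it is not about rigidity of normalisations or pairings, but is exactly what guarantees that $H^0(X,\omega_X)$ generates $\HO_*(X)$ as a $\HT^*(X)$-module, so that the submodule on which the argument works is all of $\HH_*(X)$. Without that generation step your argument proves nothing beyond a proper submodule, and without the reduction to top-degree classes the claimed cancellation of Todd twists does not occur.
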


Working more locally, the proof should also lead to a proof of
C\u{a}ld\u{a}raru's conjecture for manifolds with non-trivial
canonical bundle, but this shall be pursued elsewhere.
\medskip

{\bf Acknowledgments.} We wish to thank E.\ Macr\`i and P.\
Stellari for useful discussions. Financial support of the
following institutions is gratefully acknowledged: Imperial
College London, Max-Planck-Institute Bonn, and SFB/TR 45.

\section{Hochschild (co-)homology}

This section contains only well-known material presented in a form
ready for later use. The general references are
\cite{caldararu1,vdbergh,kontsevich}. In Section \ref{Sect14} we
discuss an open conjecture of C{\u a}ld{\u a}raru on the module
structure of Hochschild homology under the
Hochschild--Kostant--Rosenberg isomorphism and prove special cases
of it which are crucial for the proof of our main result.

\subsection{Hochschild (co-)homology}\label{Sect11}
Let $X$ be a smooth projective variety of dimension $n$. By
$\iota\colon X \stackrel\sim\to \Delta \subset X \times X$, we
denote the diagonal embedding. One defines the \emph{Hochschild
cohomology of $X$} by \[\HH^*(X) := \bigoplus_i
\HH^i(X)[-i]\quad\text{with}\quad
        \HH^i(X): = \Ext^i_{X\times X}(\iota_* \sheaf O_X, \iota_* \sheaf O_X)\]
and the \emph{Hochschild homology} by
\[\HH_*(X) := \bigoplus_i \HH_i(X)[i]\quad\text{with}\quad
\HH_i(X) := \Ext^{-i}_{X\times X}(\iota_* \omega_X\dual[- n], \iota_* \sheaf
O_X).
    \]

\begin{remark}
There are other conventions on the grading of Hochschild homology
in the literature. The grading used in this paper is analogous to
the homological and the cohomological  grading in algebraic
topology. Moreover, the Mukai pairing becomes homogeneous of
degree zero in this setting.
\end{remark}

The Yoneda product, i.e.\ composition in the derived category,
endows $\HH^*(X)$ with the structure of a graded $\CC$-algebra and
$\HH_*(X)$ becomes a module over it. The product in Hochschild
cohomology shall be denoted by
$$\cup\colon\HH^p(X)\otimes\HH^q(X)\to\HH^{p+q}(X)$$
and the module structure by
$$\cap\colon\HH^p(X)\otimes\HH_q(X)\to \HH_{q-p}(X).$$

\subsection{Polyvector fields}\label{Sect12} A similar structure can be defined by using
polyvector fields and forms.  Let $\Omega_{X, *}$ denote the
complex with trivial
    differentials and $\Omega^i_X$ in degree $-i$, i.e.\
    \[
        \Omega_{X, *} := \bigoplus_q \Omega^q_X[q],
    \]
    and let $\bigwedge^* \sheaf T_X$ be its dual:
    \[\bigwedge\nolimits^* \sheaf T_X := \bigoplus_q \bigwedge\nolimits^q \sheaf T_X[-q]\cong\Omega_{X,*}\dual.
    \]
The cup-product defines a ring structure on the graded
    $\CC$-algebra
    \[
        \HT^*(X) :=\bigoplus \HT^i(X)[-i]~~~~~~{\rm with}~~~~~~\HT^i(X):=\bigoplus_{p+q=i} H^p(X, \bigwedge\nolimits^q \sheaf
        T_X)
    \]
    and contraction with polyvector fields endows
    \[
        \HO_*(X) := \bigoplus \HO_i(X)[i]~~~~~~{\rm with}~~~~~~\HO_i(X) = \bigoplus_{q-p=i}
        H^{p}(X, \Omega_X^q)
    \]
    with the structure of a $\HT^*(X)$-module. We shall write
these two operations as
$$\wedge\colon \HT^p(X)\otimes\HT^q(X)\to\HT^{p+q}(X)$$
and $$\inner\colon\HT^p(X)\otimes\HO_q(X)\to\HO_{q-p}(X).$$

 \subsection{The Hochschild--Kostant--Rosenberg
 isomorphism}\label{Sect13}

    There is a unique natural isomorphism
    \begin{equation}\label{eqn:HKRcompl}
        I\colon \iota^* \iota_* \sheaf O_X \to \Omega_{X,*}
    \end{equation}
    in $\DCb(X)$
    such that its adjoint morphism $\iota_* \sheaf O_X \to \iota_* \Omega_{X, *}$ coincides with the exponential
    of the universal Atiyah class $\alpha\colon \iota_* \sheaf O_X \to \iota_* \Omega_X[1]$ of $X$
    (see~\cite{caldararu:mukai2} or~\cite{buchweitz-flenner}).

Consider the  adjunctions $\iota^*\dashv \iota_*$ and
$\iota_!\dashv \iota^*$, where $\iota_!({\mathcal
F})=\iota_*({\mathcal F}\otimes\omega_X\dual[-n])$. Using those,
the isomorphism $I$ yields the two
\emph{Hochschild--Kostant--Rosenberg isomorphisms}
    \[
        \begin{aligned}
            I^\HKR\colon \HH^i(X) = \Ext^i_{X\times X}(\iota_* \sheaf O_X, \iota_* \sheaf O_X)
            & \stackrel{\sim}{\to} \Ext^i_X(\iota^* \iota_* \sheaf O_X, \sheaf O_X)
            \\
            & \stackrel{\sim}{\to}
            \Ext^i_X(\Omega_{X, *}, \sheaf O_X) = \HT^i(X)
        \end{aligned}
    \]
and

\[
        \begin{aligned}
          I_\HKR\colon \HH_i(X) & = \Ext^{-i}_{X\times X}(\iota_* \omega_X\dual[-n], \iota_* \sheaf O_X)
          \stackrel{\sim}{\to} \Ext^{-i}_X(\sheaf O_X, \iota^*\iota_*\sheaf O_X)
        \\
          & \stackrel{\sim}{\to}
          \Ext^{-i}_X(\sheaf O_X, \Omega_{X, *}) = \bigoplus H^{-i+j}(X,\Omega_X^j)=\HO_i(X).
        \end{aligned}
    \]

In general, these two morphisms respect neither the algebra nor
the module structure. In order to compare $(\cup,\cap)$ with
$(\wedge,\inner)$, the original HKR-isomorphisms have to be
modified as follows.
    Let $a := \hat A^{\frac{1}{2}} \in H\Omega_0(X)$, where $\hat A=\hat A(X)$ is the characteristic
    class on $X$ corresponding to the $\hat A$-genus. Then consider the induced graded isomorphisms
    \[
        a^{-1} \inner \cdot\colon \HT^*(X) \to \HT^*(X), v \mapsto a^{-1} \inner v
    \]
    and
    \[
        a \wedge \cdot\colon \HO_*(X) \to \HO_*(X), \alpha \mapsto a \wedge \alpha.
    \]
    Here, $\inner$ denotes the contraction of a polyvector field by a form
    \[
      \inner\colon  \HO_{-p}(X) \otimes \HT^q(X) \to \HT^{p + q}(X),\ (\alpha, v) \mapsto \alpha \inner v
    \]
    and $\wedge$ denotes the map
    \[
        \wedge\colon \HO_p(X) \otimes \HO_q(X) \to \HO_{p + q}(X),\ (\alpha, \beta) \mapsto \alpha \wedge \beta
    \]
induced by the exterior product of forms. Earlier we used the
contraction of  a form $\alpha$ by a polyvector field $v$, which
was denoted by $v\inner\alpha$. The order of the arguments should
avoid any confusion. We have to stress, however, that we adopt the
following sign convention for the inner product: For an odd vector
space $V$ and vectors $v\in V$, $w\in V\dual$ one has $v\inner
w=-w\inner v$, that is $\inner$ shall be symmetric in the graded sense. \label{page:conven}

Following Kontsevich (\cite{kontsevich}), one then defines the
following modified HKR-isomorphisms
 \[
        I^\Ko := (a^{-1}\inner \cdot) \circ I^\HKR\colon \HH^*(X) \stackrel{\sim}{\to} \HT^*(X)
    \]
and
    \[
        I_\Ko := (a \wedge \cdot) \circ I_\HKR\colon \HH_*(X)
        \stackrel{\sim}{\to}
        \HO_*(X).
    \]

By \cite{kontsevich} and \cite{vdbergh} one knows that $I^\Ko
\colon \HH^*(X) \stackrel{\sim}{\to} \HT^*(X)$ is an isomorphism
of $\CC$-algebras, i.e.\ $I^\Ko(v_1\cup v_2)=I^\Ko(v_1)\wedge
I^\Ko(v_2)$ for all $v_1,v_2\in \HT^*(X)$.

In Section \ref{Sect14} we will comment on the multiplicativity of
$I_\Ko$.

\begin{remark}
Recall that $\hat A(X)$ is obtained from the power series
$x/(e^{x/2}-e^{-x/2})$, whereas one uses $x/(1-e^{-x})$ for the
Todd-genus $\td(X)$. Hence $\td(X)=\exp({\rm c}_1(X)/2)\wedge\hat
A(X)$. Since inner product with ${\rm c}_1(X)$ acts as a
derivation on the algebra $\HT^*(X)$, the modified HKR-isomorphism
$I^\Ko$ is multiplicative  if and only if $(\exp(t\cdot {\rm
c}_1(X))\wedge a)\inner\cdot)\circ I^\HKR$ is multiplicative,
where $t\in{\mathbb C}$. Thus, one could as well use
$a=\sqrt{\td(X)}$ for the definition of $I^\Ko$.

Similar arguments apply to the definition of $I_\Ko$ and its
conjectured multiplicativity (see Section \ref{Sect14}).
\end{remark}

\subsection{Hochschild (co-)homology and Chern characters under derived equivalence.}
\label{Sect15} Suppose
    \[
        \Phi\colon \DCb(X_1) \stackrel\sim\longrightarrow \DCb(X_2)
    \]
is an exact equivalence of two smooth projective varieties $X_1$
and $X_2$. Due to results of C\u{a}ld\u{a}raru \cite{caldararu1}
and Orlov \cite{orlov} (cf.\ \cite[Ch.\ 6]{huybrechts:fm}), one
knows that $\Phi$ induces an isomorphism of $\CC$-algebras
    \[
        \Phi^{\HH^*}\colon \HH^*(X_1) \stackrel\sim\longrightarrow \HH^*(X_2)
    \]
    and an isomorphism of $\HH^*(X_1)$-modules (via $\Phi^{\HH^*}$)
    \[
        \Phi^{\HH_*}\colon \HH_*(X_1) \stackrel\sim\longrightarrow \HH_*(X_2).
    \]

Under the same assumptions, the following diagram commutes (see
\cite{caldararu1}):

\begin{equation}
        \label{eq:diach}
        \begin{CD}
            \DCb(X_1) @>\Phi>> \DCb(X_2) \\
            @V{\ch}VV @VV{\ch}V \\
            H\Omega_0(X_1) &&  H\Omega_0(X_2) \\
            @V{I_\HKR^{-1}}VV @VV{I_\HKR^{-1}}V \\
            \HH_0(X_1) @>>{\Phi^{\HH_*}}> \HH_0(X_2).
        \end{CD}
\end{equation}

In fact, the composition $I_\HKR^{-1}\circ\ch$, i.e.\ the
Hochschild--Chern character, admits a different description in
terms of Fourier--Mukai transforms, but we shall not need this
(see \cite{caldararu1}).


\section{The multiplicative structure of Hochschild homology}\label{Sect14}

In \cite{caldararu:mukai2} C{\u a}ld{\u a}raru conjectured that
the modified HKR-isomorphism $I_\Ko$ is an isomorphism of
$\HH^*(X)$-modules, where the $\HT^*(X)$-module $\HO_*(X)$ is
viewed as an $\HH^*(X)$-module via $I^\Ko$. In the proof of the
main result (Theorem \ref{mainthm:HK}) a special case of this
conjecture is used. So, as the general conjecture is still open,
we will have to provide an alternative argument for this point and
in fact we will give two.

Firstly, we will show how the recent paper of Ramadoss
\cite{ramadoss} can be used to establish C{\u a}ld{\u a}raru's
conjecture for varieties with trivial canonical bundle. Secondly,
as \cite{ramadoss} is technically involved and for the general
benefit of having an alternative proof, we shall also explain a
more direct approach showing multiplicativity of $I_\Ko$ for
classes $v\in \HH^2(X)$ and algebraic classes $\beta\in \HH_0(X)$
(under additional assumptions on the manifold). Fortunately, this
special case is sufficient for the proof of Theorem
\ref{mainthm:HK}. Moreover, the argument does apply also to other
manifolds whose canonical bundle is not necessarily trivial. This
justifies further the inclusion of the second argument, which
relies on graph homology and the wheeling theorem.

\subsection{$I_\Ko$ for varieties with trivial canonical bundle.}
Let us first prove that \cite{ramadoss} does imply C{\u a}ld{\u
a}raru's conjecture for varieties with trivial canoni\-cal bundles
or, more general, for certain submodules  $\HH_*'\subset\HH_*$
respectively $\HO'_*\subset\HO_*$. Let
$$\HO_*'(X)\subset \HO_*(X)$$ be the $\HT^*(X)$-submodule
generated by the subspace $\bigoplus_i H^i(X,\Omega_X^n)$.
Simi\-larly, we define
$$\HH'_*(X)\subset \HH_*(X)$$ to be the $\HH^*(X)$-submodule
generated by the subspace
$$I_\HKR^{-1}\left(\bigoplus_i H^i(X,\Omega_X^n)\right)=I_\Ko^{-1}\left(\bigoplus_i
H^i(X,\Omega_X^n)\right).$$

\begin{proposition}\label{prop:module}
The map $I_K$ defines an isomorphism of $\HH^*(X)$-modules
$$I_K:\HH_*'(X)\stackrel{\sim}{\to} \HO_*'(X),$$ where the $\HT^*(X)$-module $\HO_*'(X)$ is viewed as
an $\HH^*(X)$-module via $I^K$.
\end{proposition}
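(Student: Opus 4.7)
The plan is to reduce the module-compatibility statement to a single generating subspace and then invoke Ramadoss's theorem. First, by construction $\HO_*'(X)$ is the $\HT^*(X)$-submodule of $\HO_*(X)$ generated by $W:=\bigoplus_i H^i(X,\Omega_X^n)$, and $\HH_*'(X)$ is the $\HH^*(X)$-submodule of $\HH_*(X)$ generated by $S:=I_\HKR^{-1}(W)=I_K^{-1}(W)$. The equality of these two preimages is a key initial observation: for any $\alpha\in W$, writing $a=\sum_p a_p$ with $a_p\in H^p(X,\Omega_X^p)$, each component $a_p\wedge\alpha$ lies in $H^{p+i}(X,\Omega_X^{n+p})$, which vanishes for $p>0$ on dimensional grounds; hence $a\wedge\alpha=\alpha$. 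Consequently it suffices to establish
\[
I_K(v\cap\beta)=I^K(v)\inner I_K(\beta)\quad\text{for all } v\in\HH^*(X)\text{ and }\beta\in S.
\]

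For this compatibility on the generating subspace, the decisive input is Ramadoss's work \cite{ramadoss}. His analysis quantifies the precise discrepancy between $I_\HKR(v\cap\beta)$ and $I^\HKR(v)\inner I_\HKR(\beta)$ in terms of characteristic classes derived from the universal Atiyah class recalled in Section \ref{Sect13}, and shows that the discrepancy is absorbed exactly by the $\hat A^{1/2}$-twist entering the definitions of $I^K$ and $I_K$. Combined with the vanishing noted above, this yields the desired identity on all of $S$.

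Once the compatibility holds on generators, the remainder is formal. Since $I^K$ is already known to be a ring isomorphism by van den Bergh \cite{vdbergh} (confirming Kontsevich \cite{kontsevich}), the $\HH^*(X)$-orbit under $\cap$ of any $\beta\in S$ is carried by $I_K$ bijectively onto the $\HT^*(X)$-orbit under $\inner$ of $I_K(\beta)\in W$. Assembling these orbits provides a well-defined surjection $I_K\colon \HH_*'(X)\to\HO_*'(X)$ of $\HH^*(X)$-modules; injectivity is automatic because $I_K$ is already a bijection on all of $\HH_*(X)$. This gives the claimed isomorphism.

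The principal obstacle will be extracting from the technical apparatus of \cite{ramadoss} — derived Atiyah classes, graph homology, the wheeling theorem — the clean compatibility on $S$ stated above. Once that translation has been carried out, the verification $a\wedge\alpha=\alpha$ on $W$ and the formal extension by the module action amount to bookkeeping.
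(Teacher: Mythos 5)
Your reduction to the generating subspace is exactly the paper's first step: the formal computation $v\cap(v_0\cap\alpha_0)=(v\cup v_0)\cap\alpha_0$ together with the multiplicativity of $I^\Ko$ (van den Bergh/Kontsevich) reduces everything to checking $I_\Ko(v\cap\beta)=I^\Ko(v)\inner I_\Ko(\beta)$ for $\beta$ in (the preimage of) $\bigoplus_i H^i(X,\Omega_X^n)$, and your observation that $a\wedge\alpha=\alpha$ on top-degree forms, so that $I_\HKR^{-1}$ and $I_\Ko^{-1}$ agree there, is also correct and used in the paper.

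The gap is in the step you flag yourself as "the principal obstacle": you assert that Ramadoss's theorem "quantifies the precise discrepancy between $I_\HKR(v\cap\beta)$ and $I^\HKR(v)\inner I_\HKR(\beta)$" and that this discrepancy "is absorbed exactly by the $\hat A^{1/2}$-twist." That is not what Ramadoss proves, and stated this way it is essentially the conclusion you are trying to reach. Ramadoss compares two \emph{different} homology HKR maps --- $I_\HKR$ (adjunction in the second variable) and an auxiliary $\widetilde I_\HKR$ (adjunction in the first variable, composed with $\bigwedge^p\sheaf T_X\otimes\omega_X\cong\Omega_X^{n-p}$) --- and shows $\mathrm{td}(X)\wedge I_\HKR=\widetilde I_\HKR$; he says nothing directly about module structures. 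The paper's proof needs three further ingredients that your proposal omits entirely: (i) the functoriality of the isomorphism $I(\sheaf F)\colon\Ext^*_{X\times X}(\iota_*\sheaf F,\iota_*\sheaf O_X)\to\Ext^*_X(\Omega_{X,*}\otimes\sheaf F,\sheaf O_X)$ in $\sheaf F$, applied to a class $\beta\colon\omega_X\dual[-n]\to\sheaf O_X[i-n]$, which is what actually yields $\widetilde I_\HKR(v\cap\iota_*\beta)=I^\HKR(v)\inner\beta$ --- this is the real mechanism converting the Yoneda action into contraction, and Ramadoss only enters to translate back from $\widetilde I_\HKR$ to $I_\HKR$ on the top-degree component; (ii) the linear-algebra identity $(-1)^\ell(\beta'\inner w)\inner\beta=\beta'\wedge(w\inner\beta)$ for $\beta$ of maximal holomorphic degree (Lemma~\ref{lem:innermax}), needed to move the $a^{\pm1}$-twists past the contraction; and (iii) after all this one is left with a spurious factor $\exp(-\mathrm{c}_1(X)/2)$, whose removal requires the nontrivial vanishing $\mathrm{c}_1(X)\inner\cdot=0$ on $H^*(X,\sheaf T_X)$ (Lemma~\ref{lem:second}, proved via the symmetry of the Atiyah class). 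Without (i)--(iii) the compatibility on the generators is not established. (A minor misattribution: graph homology and the wheeling theorem belong to the paper's \emph{second}, independent argument for algebraic classes, not to Ramadoss's result or to this proposition.)
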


\begin{proof} We have to show that for any $\alpha\in \HH_q'(X)$ the following
diagram commutes

\begin{equation}\label{eqn:mult}\begin{CD}
\HH^p(X)@>{\mbox{}\cap \alpha}>>\HH'_{q-p}(X)\\
@V{I^\Ko}VV@VV{I_\Ko}V\\
\HT^p(X)@>>{\inner I_\Ko(\alpha)}>\HO_{q-p}'(X).
\end{CD}
\end{equation}

Suppose we have shown this already for all $\alpha_0$ with
$I_\Ko(\alpha_0)\in H^i(X,\Omega_X^n)$. Then the general result
follows from the following computation for a class of the form
$\alpha=v_0\cap\alpha_0$ with $I_\Ko(\alpha_0)\in
H^i(X,\Omega_X^n)$ and any $v\in\HH^p(X)$:
\begin{eqnarray*} I_\Ko(v\cap
\alpha)&=&I_\Ko(v\cap(v_0\cap\alpha_0))=I_\Ko((v\cup
v_0)\cap\alpha_0)\\
&\stackrel{(\sharp)} =&I^\Ko(v\cup v_0)\inner I_\Ko(\alpha_0)
\stackrel{(*)}{=}(I^\Ko(v)\wedge
I^\Ko(v_0))\inner I_\Ko(\alpha_0)\\
&=&I^\Ko(v)\inner (I^\Ko(v_0)\inner
I_\Ko(\alpha_0))\stackrel{(\sharp)}=I^\Ko(v)\inner
I_\Ko(v_0\cap\alpha_0)\\
&=&I^\Ko(v)\inner I_\Ko(\alpha).
\end{eqnarray*}
Here ($*$) is due to the multiplicativity of $I^\Ko$ (see
\cite{kontsevich} and \cite{vdbergh}) and ($\sharp$) follows from
the assumed commutativity of (\ref{eqn:mult}) for $\alpha_0$.

Thus, it remains to prove the commutativity of (\ref{eqn:mult})
for all $\alpha\in \HH_*(X)$ with $I_\Ko(\alpha)\in
H^i(X,\Omega_X^n)$.

But before, let us revisit the definition of the HKR-isomorphism
for Hochschild homology. By construction, $I_\HKR$ is given by
composing the adjunction morphism $\Ext_{X\times
X}^{-i}(\iota_*\omega_X\dual[-n],\iota_*\sheaf
O_X)=\Ext^{-i}_{X\times X}(\iota_!\sheaf O_X,\iota_*\sheaf
O_X)\cong\Ext^{-i}_X(\sheaf O_X,\iota^*\iota_*\sheaf O_X)$ in the
second variable with $I\colon\iota^*\iota_*\sheaf
O_X\stackrel{\sim}\to\Omega_{X,*}$. A priori, one could also apply
the adjunction $\iota^*\dashv\iota_*$ and composite with $I$ in
the first factor (analogously to the definition of the
HKR-isomorphism for Hochschild cohomology). More precisely, one
could compose the adjunction morphism $\Ext_{X\times
X}^{-i}(\iota_*\omega_X\dual[-n],\iota_*\sheaf
O_X)\cong\Ext_X^{-i}(\iota^*\iota_*\sheaf
O_X\otimes\omega_X\dual[-n],\sheaf O_X)$ with $I$ to obtain $\widetilde
I_\HKR\colon\HH_i(X)\stackrel\sim\to H^{-i}(X,\bigwedge^*\sheaf
T_X\otimes\omega_X[n])\stackrel\sim\to\HO_i(X)$, where we
implicitly use the natural isomorphism $R\colon \bigwedge^p\sheaf
T_X\otimes\omega_X\cong\Omega_X^{n-p}$. 

The two isomorphisms
$I_\HKR,\widetilde I_\HKR\colon\HH_i(X)\stackrel\sim\to\HO_i(X)$ differ
by the Todd genus. This is the main result of \cite{ramadoss}.
More precisely, following Ramadoss one has $${\rm td}(X)\wedge
I_\HKR=\widetilde I_\HKR.$$ Note that there is an extra sign
operator in \cite{ramadoss} which is caused by a different sign convention
for the isomorphism $R$ and thus $\widetilde I_\HKR$.

In particular
$${\rm pr}_{n}\circ I_\HKR={\rm pr}_{n}\circ\widetilde
I_\HKR,$$ where $\mathrm{pr}_{n}\colon\HO_i(X)=\bigoplus_{q-p=i}H^p(X,\Omega_X^q)\to
H^{n-i}(X,\Omega^n_X)$ denotes the projection.

Back to the commutativity of (\ref{eqn:mult}) for classes in
$H^i(X,\Omega_X^n)$.  First observe that
$H^i(X,\Omega_X^n)=\Ext^i_X(\omega_X\dual,\sheaf O_X)=
\Ext^{i-n}_X(\omega_X\dual[-n],\sheaf O_X)$ embeds via the
diagonal into $\HH_{n-i}(X)$, i.e.\ there is a natural map
$$\iota_*\colon H^i(X,\Omega_X^n)\hookrightarrow \HH_{n-i}(X).$$
Moreover, it is easy to check that the composition of $\widetilde
I_\HKR\circ\iota_*\colon H^i(X,\Omega_X^n)\to \HO_{n-i}(X)$ with the
projection $\HO_{n-i}(X)\to H^i(X,\Omega_X^n)$ is the identity.
Hence, $\alpha=\iota_*(I_\Ko(\alpha))$ for $I_\Ko(\alpha)\in
H^i(X,\Omega_X^n)$ and $I_\Ko(\iota_*\beta)=\beta$ for any
$\beta\in H^i(X,\Omega_X^n)$.

The HKR-isomorphism $I\colon\iota^*\iota_*\sheaf
O_\Delta\stackrel\sim\to\Omega_{X,*}$ yields the isomorphisms
$$\begin{array}{clcl} I(\sheaf F)\colon &\Ext^*_{X\times X}(\iota_*
\sheaf F, \iota_* \sheaf O_X)&&\\ &\cong \Ext^*_X((\iota^* \iota_*
\sheaf O_X) \otimes \sheaf F, \sheaf O_X)& \stackrel\sim\to&
\Ext^*_X(\Omega_{X, *} \otimes \sheaf F, \sheaf O_X)
\end{array}$$
        which are functorial in $\sheaf F$ (as object on $X$).
For $\sheaf F = \sheaf O_X$ this is the isomorphism $I^\HKR$ and
for $\sheaf F = \sheaf \omega_X\dual[-n]$ it is $\widetilde
I_\HKR$. (We continue to use the natural isomorphism $R\colon
\Ext^*_X(\Omega_{X, *} \otimes \sheaf \omega_X\dual[-n], \sheaf
O_X)\cong H^*(X, \bigwedge^* \sheaf T_X \otimes \omega_X[n])
\stackrel\sim\to \HO_{-*}(X)$ as in the definition of $\widetilde
I_\HKR$ above.)

By functoriality of $I(\sheaf F)$ applied to a morphism
$\beta\colon\sheaf\omega_X\dual[-n]\to\sheaf O_X[i-n]$ induced by a
class $\beta\in H^i(X,\Omega_X^n)$, we obtain
$$\widetilde I_\HKR(v\cap\iota_*\beta)=\widetilde I_\HKR(v\circ\iota_*\beta)=I^\HKR(v)\inner\beta$$
for all $v\in \HH^*(X)$ (our definition of $R$ is chosen exactly in a way such that this formula holds
without any additional signs).

Write a class $\beta$ of maximal
holomorphic degree as $\beta=I_\HKR(\alpha)=\widetilde
I_\HKR(\alpha)$. Then apply
 Lemma~\ref{lem:innermax} below to $w=I^\HKR(v)\in H^*(X,\bigwedge^j\sheaf T_X)$ to obtain:
 \[
 \begin{aligned}
 I_K(v \cap \iota_* \beta) & =
                a \wedge I_\HKR(v \cap \iota_* \beta)\\&
                 = \exp(-{\rm c}_1(X)/2)\wedge a^{-1} \wedge
                {\rm td}(X)\wedge I_\HKR(v\cap\iota_*\beta)\\
                &=\exp(-{\rm c}_1(X)/2)\wedge\left(a^{-1}\wedge\widetilde I_\HKR(v\cap
                \iota_*\beta)\right)\\
                 &=\exp(-{\rm c}_1(X)/2)\wedge\left(a^{-1}\wedge(I^\HKR(v) \inner
                 \beta)\right)
                \\
                & = \exp(-{\rm c}_1(X)/2)\wedge\left((a^{-1} \inner I^\HKR(v)) \inner \beta\right)\\
                &= \exp(-{\rm c}_1(X)/2)\wedge\left(I^\Ko(v) \inner \beta\right)\\
                &=\exp(-{\rm c}_1(X)/2)
                \wedge\left(I^\Ko(v)\inner I_\Ko(\iota_*\beta)\right).
            \end{aligned}
        \]

This proves the assertion up to the additional factor $\exp(-{\rm
c}_1(X)/2)$. To conclude, it suffices to show that
\begin{equation}\label{eqn:enough}
{\rm c}_1(X)\wedge(w\inner \gamma)=0
\end{equation}
whenever $\gamma$ has maximal holomorphic degree, i.e.\ $\gamma\in
H^*(X,\Omega_X^n)$. Using the compactness of $X$, (\ref{eqn:enough})
is equivalent to
\begin{equation}\label{eqn:integral}
\int_X\left({\rm c}_1(X)\wedge(w\inner
\gamma)\right)\wedge\delta=0
\end{equation}
for all $\delta\in \HO_*(X)$. Applying Lemma \ref{lem:innermax}
again, one finds
\begin{eqnarray*}
\left({\rm c}_1(X)\wedge(w\inner \gamma)\right)\wedge\delta=\pm
({\rm
c}_1(X)\wedge\delta)\wedge (w\inner\gamma)\\
=\pm(({\rm c}_1(X)\wedge\delta)\inner w)\inner\gamma\\
=\pm({\rm c}_1(X)\inner(\delta\inner w))\inner\gamma.
\end{eqnarray*}
Due to the assumption that $\gamma\in H^*(X,\Omega_X^n)$, only those
$\delta$ with $\delta\inner w\in H^*(X,\sheaf T_X)$ contribute to
(\ref{eqn:integral}). Then apply Lemma \ref{lem:second} below to
conclude.
\end{proof}

\begin{lemma}
        \label{lem:innermax}
        Let $\beta \in H^*(X,\omega_X)\subset\HO_n(X)$ be a form of maximal holomorphic
        degree. Then
        \[
           (-1)^\ell (\beta' \inner w) \inner \beta = \beta' \wedge (w \inner \beta)
        \]
        for all $w \in \HT^*(X)$ and all $\beta' \in H^*(X,\Omega_X^\ell)$.
\end{lemma}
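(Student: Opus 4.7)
The statement is $\sheaf O_X$-linear in each of $\beta'$, $w$, and $\beta$ and depends only on the pointwise fibrewise operations on the exterior algebras of the cotangent bundle, extended to Dolbeault cohomology by cup product. My plan is therefore to reduce it to the purely multilinear-algebraic identity on a single fibre: for a finite-dimensional complex vector space $V$ of dimension $n$, $\beta \in \bigwedge^n V^*$, $w \in \bigwedge^q V$ and $\beta' \in \bigwedge^\ell V^*$,
\[
(-1)^\ell (\beta' \inner w) \inner \beta = \beta' \wedge (w \inner \beta).
\]
Cup-product functoriality then transports this identity back to the cohomological statement in the lemma.

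I would prove the algebraic identity by induction on $\ell$. The case $\ell = 0$ is tautological, with the scalar $\beta'$ acting as multiplication on both sides. For the base case $\ell = 1$, write $w = v_1 \wedge \cdots \wedge v_q$; since contraction by a $1$-form $\alpha \in V^*$ is an odd derivation of $\bigwedge^* V$, expanding $\alpha \inner w$ as a signed sum of $(q-1)$-fold wedges and comparing term-by-term with $\alpha \wedge (w \inner \beta)$ (most transparently in a basis of $V$ in which $\beta = e_1^* \wedge \cdots \wedge e_n^*$) yields $-(\alpha \inner w) \inner \beta = \alpha \wedge (w \inner \beta)$. The overall sign $-1$ is exactly the one forced by the graded-symmetric convention for $\inner$ recorded on page~\pageref{page:conven}, and agrees with $(-1)^\ell$ for $\ell = 1$.

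For the inductive step, I decompose $\beta' = \alpha \wedge \beta''$ with $\alpha \in V^*$ and $|\beta''| = \ell - 1$. The compatibility of contraction with the wedge product of forms provides a relation of the shape $(\alpha \wedge \beta'') \inner w = \pm\, \beta'' \inner (\alpha \inner w)$, with the sign dictated by the same graded-symmetric convention. Applying the induction hypothesis first to the pair $(\beta'', \alpha \inner w)$ and then the base case to $(\alpha, w)$, together with the graded commutativity of the wedge product of forms needed to slide $\alpha$ back into position, collapses the intermediate signs to the single factor $(-1)^\ell$ appearing on the left-hand side of the stated identity.

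The main obstacle will be the careful bookkeeping of signs under the graded-symmetric convention for $\inner$, which inserts an additional $-1$ whenever two odd-total-degree arguments are swapped and therefore differs from the more familiar one-sided contraction convention. A useful sanity check at each step is the toy case $n = q = \ell = 1$ with $\beta = \beta' = e_1^*$ and $w = e_1$, which already fixes the global sign and confirms the compatibility of the factor $(-1)^\ell$ in the statement with the convention of page~\pageref{page:conven}.
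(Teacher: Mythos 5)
Your proposal is correct and takes essentially the same route as the paper: both reduce the lemma to the purely multilinear identity $(-1)^\ell(\beta' \inner w) \inner \beta = \beta' \wedge (w \inner \beta)$ on a single $n$-dimensional fibre $V$, after which everything is elementary. The only difference lies in how that fibrewise identity is verified---the paper specialises by linearity to explicit basis monomials and leaves the check to the reader, whereas you induct on $\ell$ via the derivation property of contraction by a $1$-form---and both verifications are routine once the graded-symmetric sign convention for $\inner$ from page~\pageref{page:conven} is fixed.
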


\begin{proof}
The lemma clearly follows from the following easy fact in linear
algebra: Let $V$ be an $n$-dimensional vector space and let
$\beta\in\bigwedge^nV\dual$, $\beta' \in \bigwedge^\ell V\dual$,
and $w \in \bigwedge^* V$. Then
\begin{equation}
            \label{eq:linalg}
            (-1)^\ell(\beta' \inner w) \inner \beta = \beta' \wedge (w \inner
            \beta).
\end{equation}
As the equation is linear in $\beta$, $\beta'$, and $w$, we may
assume that $\beta' = e^{k-\ell+1} \wedge \ldots \wedge e^{k}$,
$\beta = e^1 \wedge \ldots \wedge e^n$, and $w = e_{k} \wedge
\ldots \wedge e_2\wedge e_{1}$. Here $(e_j)$ is a basis of $V$ and
$(e^j)$ is the dual basis. The verification of (\ref{eq:linalg})
for this choice is straightforward and left to the reader, but
remember the sign convention for the inner product on page
\pageref{page:conven}.
\end{proof}

\begin{lemma}\label{lem:second}
For any complex manifold $X$ the contraction with the first Chern
class of $X$
$${\rm c}_1(X)\inner\cdot\colon H^*(X,\sheaf T_X)\to H^{*+1}(X,\sheaf O_X)$$
is trivial.
\end{lemma}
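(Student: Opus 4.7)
The plan is to realize $c_1(X)\inner(-) \colon H^*(X,\mathcal{T}_X) \to H^{*+1}(X,\mathcal{O}_X)$ as the connecting homomorphism of the Atiyah extension of the anticanonical bundle $\omega_X\dual$, and then to split that short exact sequence, as sheaves of $\mathbb{C}$-vector spaces, by means of the Lie derivative.

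Concretely, I would consider the short exact sequence
\[
0 \to \mathcal{O}_X \to \mathrm{Diff}^{\leq 1}(\omega_X\dual) \to \mathcal{T}_X \to 0,
\]
in which the middle term is the sheaf of holomorphic first-order differential operators on $\omega_X\dual$, the left map is the inclusion of zeroth-order operators, and the right map is the principal symbol. Its extension class in $\Ext^1(\mathcal{T}_X,\mathcal{O}_X) = H^1(X,\Omega_X^1)$ is the Atiyah class of $\omega_X\dual$, which for a line bundle coincides with $c_1(\omega_X\dual) = c_1(X)$. Unwinding the identification of Yoneda composition with cup-product-and-contract, the corresponding connecting homomorphism on cohomology is, up to sign, exactly the operation $v \mapsto c_1(X) \inner v$. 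The heart of the argument is then the observation that, while the sequence is non-split as $\mathcal{O}_X$-modules, it does split as a sequence of sheaves of $\mathbb{C}$-vector spaces: the Lie derivative $v \mapsto L_v$ gives a $\mathbb{C}$-linear section of the symbol map, since any holomorphic vector field $v$ acts by Lie differentiation on sections of $\omega_X\dual$ as a first-order holomorphic differential operator with principal symbol $v$. The failure of $\mathcal{O}_X$-linearity, expressed by the familiar formula $L_{fv} = f L_v - v(f)\cdot\mathrm{id}$, is precisely what is encoded in the Atiyah class.

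Since sheaf cohomology of coherent sheaves depends only on the underlying sheaves of abelian groups, a $\mathbb{C}$-linear splitting suffices: it induces a splitting of the long exact sequence in cohomology, forcing every connecting homomorphism $H^p(\mathcal{T}_X) \to H^{p+1}(\mathcal{O}_X)$ to vanish. The only step requiring genuine care — and thus the main potential obstacle — is the sign-correct identification of the connecting map of the Atiyah sequence with the contraction operation of Section \ref{Sect11}. Everything else rests on standard material: the Atiyah class of a line bundle is its first Chern class, and the Lie derivative on holomorphic tensor bundles is a natural holomorphic first-order differential operation available on any complex manifold.
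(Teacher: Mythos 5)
Your proof is correct, but it takes a genuinely different route from the one in the paper. You realise $\mathrm{c}_1(X)\inner\cdot$ as the connecting homomorphism of the Atiyah algebroid sequence $0 \to \sheaf O_X \to \sheaf D^{\leq 1}(\omega_X\dual) \to \sheaf T_X \to 0$ of the anticanonical bundle and then kill it by exhibiting the Lie derivative $v \mapsto L_v$ as a $\CC$-linear (though not $\sheaf O_X$-linear) section of the symbol map; since sheaf cohomology and its connecting maps only see the underlying sheaves of abelian groups, this splitting forces the boundary maps to vanish in all degrees. All the ingredients check out: the extension class of that sequence is $A(\omega_X\dual) = \tr A(\sheaf T_X) = \mathrm{c}_1(X)$, the connecting map of a short exact sequence is Yoneda composition with its extension class (the sign is irrelevant here), and $L_{fv} = fL_v - v(f)\cdot\id$ on sections of $\omega_X\dual$ confirms that the section is exactly $\CC$-linear and no better. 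The paper instead argues entirely inside the Atiyah-class formalism: naturality of the Atiyah class applied to $\varphi_v\colon \sheaf O_X[-i] \to \sheaf T_X$ shows that contracting $A(\sheaf T_X) \in H^1(X, \sheaf T_X \otimes \Omega_X \otimes \Omega_X)$ with $v$ in one $\Omega_X$-slot vanishes, and Kapranov's symmetry $A(\sheaf T_X) \in H^1(X, \sheaf T_X \otimes S^2\Omega_X)$ transfers this vanishing to the other slot before taking the trace. The trade-off: the paper's argument stays within machinery it has already set up and needs no actual splitting, but it imports the nontrivial symmetry of $A(\sheaf T_X)$; yours is more elementary and more transparent geometrically --- it isolates the real reason the lemma holds, namely that $\omega_X\dual$ is a natural bundle on which vector fields act by Lie derivative (equivalently, the canonical bundle deforms along with $X$) --- and the same argument gives $\mathrm{c}_1(L)\inner v = 0$ for any line bundle $L$ functorially attached to $X$. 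The only point deserving explicit care in a written version is the sign-correct match between the connecting map and the contraction of Section \ref{Sect11}, which, as you note, is immaterial for a vanishing statement.
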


\begin{proof}
The assertion can be proved by using the curvature as the
Dolbeault representative of the Atiyah class $A(\sheaf T_X)\in
H^1(X,\sheaf E nd(\sheaf T_X)\otimes\Omega_X)$ or, more
algebraically, by using the naturality of the Atiyah class as
follows: Any morphism $\varphi\colon \sheaf E_1\to \sheaf E_2$ of complexes yields a
commutative diagram

\begin{equation}\begin{CD}
\sheaf E_1@>{\mbox{}A(E_1)}>>\sheaf E_1\otimes\Omega_X[1]\\
@V{\varphi}VV@VV{\varphi\otimes \id}V\\
\sheaf E_2@>>{A(E_2)}>\sheaf E_2\otimes\Omega_X[1].
\end{CD}
\end{equation}

Applied to a class $v\in H^i(X,\sheaf T_X)$ viewed as a morphism
$\varphi_v\colon\sheaf O_X[-i]\to\sheaf T_X$ it shows that the
composition

$$\sheaf O_X[-i]\stackrel{\varphi_v}\longrightarrow\sheaf
T_X\stackrel{A(\sheaf T_X)}\longrightarrow\sheaf
T_X\otimes\Omega_X[1]$$ is trivial, for $A(\sheaf O_X)=0$.

As an element in $H^{i+1}(X,\sheaf T_X\otimes\Omega_X)$ this
trivial composition can also be computed as the contraction of
$A(\sheaf T_X)\in H^1(X,\sheaf E nd(\sheaf
T_X)\otimes\Omega_X)=H^1(X,(\sheaf T_X\otimes
\Omega_X)\otimes\Omega_X)$ by $v\in H^i(X,\sheaf T_X)$ in the
second factor of the product $(\sheaf
T_X\otimes\Omega_X)\otimes\Omega_X$. However, the Atiyah class
$A(\sheaf T_X)$ is symmetric in $\Omega_X$, i.e.\ $A(\sheaf
T_X)\in H^1(X,\sheaf T_X\otimes S^2(\Omega_X))$ (see e.g.\
\cite{kapranov}), and hence also the contraction of $A(\sheaf
T_X)$ by $v$ in the third factor of $\sheaf
T_X\otimes\Omega_X\otimes\Omega_X$ is trivial. Taking the trace
yields
$$0={\rm tr}(A(\sheaf T_X)\circ\varphi_v)={\rm tr}(v\inner A(\sheaf
T_X))=v\inner {\rm c}_1(X),$$ which is equivalent to the
assertion.
\end{proof}

\begin{remark}
That the contraction of $H^i(X,\sheaf T_X)$ with ${\rm c}_1(X)$
vanishes for $i=0,1$ is obvious, as it simply says that ${\rm
c}_1(X)$ stays of type $(1,1)$ under infinitesimal automorphisms
and deformations. The general case is geometrically less obvious.
Note that the contraction by ${\rm c}_1(X)$ of elements in
$H^*(X,\bigwedge^i\sheaf T_X)$ for $i>1$ will in general be
non-trivial.
\end{remark}

 Although in general different, there are interesting cases
when $\HO_*'(X)=\HO_*(X)$ and $\HH_*'(X)=\HH_*(X)$. E.g.,\ if the
canonical bundle $\omega_X=\Omega_X^n$ is trivial, then
$H^0(X,\Omega_X^n)$ gene\-rates the $\HT^*(X)$-module $\HO_*(X)$
and the image of
$$\iota_*:H^0(X,\Omega_X^n)=\Hom(\omega_X\dual,\sheaf
O_X)\hookrightarrow
\Ext^{-n}(\iota_*\omega_X\dual[-n],\iota_*\sheaf O_X)=\HH_n(X)$$
generates the $\HH^*(X)$-module $\HO_*(X)$. In this case, the
proposition yields

\begin{corollary}\label{cor:Calconj} Suppose $\omega_X\cong\sheaf O_X$.
Then the isomorphism $$I_K:\HH_*(X)\stackrel{\sim}{\to} \HO_*(X)$$
is an isomorphism of $\HH^*(X)$-modules, where the
$\HT^*(X)$-module $\HO_*(X)$ is viewed as an $\HH^*(X)$-module via
$I^K$.\qqed
\end{corollary}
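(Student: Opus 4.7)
The plan is to derive the corollary directly from Proposition \ref{prop:module} by showing that, under the assumption $\omega_X \cong \sheaf O_X$, the primed submodules exhaust the full Hochschild (co)homology: $\HO_*'(X) = \HO_*(X)$ and $\HH_*'(X) = \HH_*(X)$. Once these equalities are in hand, the proposition delivers the asserted $\HH^*(X)$-module isomorphism $I_\Ko \colon \HH_*(X) \stackrel\sim\to \HO_*(X)$ with no further work.

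The first step is to handle $\HO_*'(X) = \HO_*(X)$. I would fix a nowhere-vanishing section $s \in H^0(X, \omega_X) = H^0(X, \Omega_X^n)$ coming from a trivialization $\sheaf O_X \cong \omega_X$. The fiberwise perfect pairing between polyvectors and forms gives a sheaf isomorphism
\[
    \cdot \inner s \colon \bigwedge\nolimits^q \sheaf T_X \stackrel\sim\to \Omega_X^{n-q},
\]
because contraction with a trivializing top form is an isomorphism of vector bundles. Passing to cohomology, every class $\alpha \in H^p(X, \Omega_X^k)$ is of the form $v \inner s$ for a unique $v \in H^p(X, \bigwedge^{n-k} \sheaf T_X) \subset \HT^*(X)$. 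Summing over $p$ and $k$, the single element $s$ generates $\HO_*(X)$ as an $\HT^*(X)$-module, hence $\HO_*'(X) \supseteq \HO_*(X)$.

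For the second equality, I would argue by a dimension comparison. Proposition \ref{prop:module} supplies a $\CC$-linear isomorphism $I_\Ko \colon \HH_*'(X) \stackrel\sim\to \HO_*'(X)$, while globally $I_\Ko \colon \HH_*(X) \stackrel\sim\to \HO_*(X)$ is an isomorphism. Combined with the inclusion $\HH_*'(X) \subseteq \HH_*(X)$ and the equality $\HO_*'(X) = \HO_*(X)$ just established, this forces $\HH_*'(X) = \HH_*(X)$ (degree by degree). Applying Proposition \ref{prop:module} then gives exactly the claim.

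There is really no obstacle to this argument: the entire content of the corollary sits in Proposition \ref{prop:module}, and the case $\omega_X \cong \sheaf O_X$ is favorable precisely because the interior product with a trivializing section converts all of $\Omega^*_X$ into $\bigwedge^* \sheaf T_X$. The only tiny caveat to watch is the bidegree bookkeeping in $\HO_*(X) = \bigoplus_{q-p = *} H^p(X, \Omega_X^q)$ when checking that $\cdot \inner s$ hits every graded piece; this is immediate from $q = n-k$ and causes no difficulty.
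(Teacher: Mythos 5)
Your argument is correct and is essentially the paper's own: the paper deduces the corollary from Proposition \ref{prop:module} by observing that, for trivial $\omega_X$, contraction with the trivializing section of $\Omega_X^n$ identifies $\bigwedge^q\sheaf T_X$ with $\Omega_X^{n-q}$, so $H^0(X,\Omega_X^n)$ generates $\HO_*(X)$ over $\HT^*(X)$ and correspondingly $\HH_*'(X)=\HH_*(X)$. Your way of getting the second equality from the injectivity of $I_\Ko$ on all of $\HH_*(X)$ is a clean justification of what the paper merely asserts.
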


\subsection{Multiplicativity of $I_\Ko$ for algebraic classes.}

As before, we denote by
    \[
        \alpha = \alpha_X\colon \iota_* \sheaf O_X \to \iota_* \Omega_X[1]
    \]
the universal Atiyah class of $X$. Composition and projection onto
the symmetric algebra of $\Omega[1]$ yield morphisms
$\alpha^n\colon \iota_* \sheaf O_X \to\iota_* \Omega_X^n[n]$ and
thus the exponential of the Atiyah class
\[
        \exp(\alpha)\colon \iota_* \sheaf O_X \to \iota_*\Omega_{X,
        *}.
    \]
Seen as a morphism between Fourier--Mukai kernels, $\exp(\alpha)$
defines a natural transformation from the identity functor to the
functor given by tensoring with $\Omega_{X, *}$. Applied to
$\sheaf F$ it yields $$\exp(\alpha)(\sheaf F)=q_* (p^* \sheaf F
\otimes \exp(\alpha)):\sheaf F\to\sheaf F\otimes\Omega_{X,*},$$
whose trace is the Chern character of $\sheaf F$
    \[
        \ch(\sheaf F)=\tr_{\sheaf F} [q_* (p^* \sheaf F \otimes \exp(\alpha))]
         \colon \sheaf O_X \to \Omega_{X, *}.
    \]
    Here $\tr_{\sheaf F}$ denotes taking the trace with respect to $\sheaf F$.

    \begin{proposition}\label{prop:secondGraph}
        \label{prop:hh2_comp} Suppose that either
        $H^0(X,\bigwedge^2\sheaf T_X)=0$ or that $X$ is
        irreducible holomorphic symplectic.
        Let $\beta \in \HH_0(X)$ be an algebraic class (that is a class that is mapped to $\ch(\sheaf F)$ under $I_\HKR\colon \HH_*(X)
        \to \HO_*(X)$ for some $\sheaf F \in \DCb(X)$). Then
        \[
            I_\Ko(u \cap \beta) = I^\Ko(u) \inner I_\Ko(\beta)
        \]
        for all $u \in \HH^2(X)$.
    \end{proposition}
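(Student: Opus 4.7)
The plan is to decompose $\HH^2(X)$ under the (unmodified) HKR isomorphism into its three Hodge pieces,
\[
\HT^2(X) = H^0(X,\bigwedge\nolimits^2 \sheaf T_X) \oplus H^1(X,\sheaf T_X) \oplus H^2(X,\sheaf O_X),
\]
write $u = u_{20} + u_{11} + u_{02}$ accordingly, and check the identity $I_\Ko(u\cap\beta) = I^\Ko(u) \inner I_\Ko(\beta)$ on each summand separately by linearity. For the right-hand side I would use the explicit description $I_\HKR(\beta) = \ch(\sheaf F) = \tr_{\sheaf F}[\exp(\alpha)]$ for the algebraic class $\beta$, and interpret the cap-action $u \cap \beta$ via the naturality of the Atiyah-class presentation of $\ch$.

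For the components $u_{11}$ and $u_{02}$, I expect the identity to follow by a direct computation at the level of Atiyah-class representatives. The cap-action of $u_{11}$ on $\beta$ corresponds, via the naturality of $\alpha$, to a derivation-type modification of $\exp(\alpha)$ whose trace matches the contraction of $I^\HKR(u_{11})$ on $\ch(\sheaf F)$; the component $u_{02}$ acts simply by wedging. In neither case does the calculation involve a nontrivial ``wheel'' built out of $u$, so the correction factor $a = \hat A^{1/2}$ enters only as a scalar on the relevant graded pieces, and the naive HKR identity upgrades at once to the Kontsevich-modified one.

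The genuinely nontrivial case is $u_{20} \in H^0(X,\bigwedge^2 \sheaf T_X)$. Under the first hypothesis $H^0(X,\bigwedge^2\sheaf T_X)=0$ this piece vanishes and the argument is complete. Under the second hypothesis, $X$ irreducible holomorphic symplectic, $u_{20}$ is a scalar multiple of the bivector $\sigma^{-1}$ dual to the holomorphic symplectic form $\sigma$. Here I would translate the identity into Kontsevich's graph calculus on $(\bigwedge^\bullet \sheaf T_X, \Omega_X^\bullet)$ and invoke the Bar-Natan--Garoufalidis--Rozansky--Thurston wheeling theorem, which records exactly how the insertion of $\hat A^{1/2}$ absorbs the wheel-corrections that arise when a bivector is contracted into $\exp(\alpha)$. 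Since $\beta$ is algebraic (so only $\exp(\alpha)$-graphs enter) and only a single bivector $u_{20}$ is contracted, the potentially obstructive graphs are precisely the connected wheels killed by wheeling.

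The main obstacle is the IHS case for $u_{20}$: controlling the wheel-type corrections produced by $\hat A^{1/2}$ when a nontrivial bivector is involved. Both the algebraicity of $\beta$ (which limits the graphs to Chern-character shapes) and the IHS hypothesis (providing $c_1(X)=0$ and the identification $\sheaf T_X \cong \Omega_X$ via $\sigma$) are essential to apply the wheeling theorem cleanly; without them the wheel contributions would genuinely obstruct multiplicativity of $I_\Ko$ in degree two.
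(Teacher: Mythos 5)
Your proposal follows essentially the same route as the paper's proof: the paper likewise sets $v=I^\HKR(u)$, splits it into the three Hodge components of $\HT^2(X)$, reduces the claim to the trace formula $I_\HKR(u\cap\beta)=\tr_{\sheaf F}[\exp(\alpha)(\sheaf F)\circ v\circ\exp(\alpha)(\sheaf F)]$, handles the $H^2(X,\sheaf O_X)$ and $H^1(X,\sheaf T_X)$ pieces by wedge and derivation computations (checking that $v\inner a=0$), and isolates the bivector case in a separate lemma proved via graph homology and the wheeling theorem. The case analysis, the role of the two hypotheses, and the appeal to wheeling for the $H^0(X,\bigwedge^2\sheaf T_X)$ component all match the paper's argument.
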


 \begin{proof}
(The following proof has been inspired by the proof of~\cite[Thm.\
4.5]{caldararu:mukai2}.)

Set
        \[
            v := I^\HKR(u) \in \HT^2(X) = H^2(X, \sheaf O_X) \oplus H^1(X, \sheaf T_X)
            \oplus H^0(X, \bigwedge\nolimits^2 \sheaf T_X).
        \]
        As before we let $I\colon \iota^*\iota_*\sheaf O_X \to \Omega_{X, *}$
        denote the quasi-isomorphism
        (\ref{eqn:HKRcompl}). Then
        $v \circ I \in \Ext^*_X(\iota^* \iota_* \sheaf O_X, \sheaf O_X)$ is the morphism corresponding to
        $u$ under the adjunction $\iota^* \dashv \iota_*$, i.e.
        \[
            \iota_*(v \circ I) \circ \eta = u,
        \]
        where $\eta\colon \iota_* \sheaf O_X \to \iota_* \iota^*( \iota_* \sheaf O_X)$ is the
        natural adjunction morphism.

        With these notations, the class $\exp(\alpha)(\sheaf F)$ is given by
        \[
            I \circ q_* (p^* \sheaf F \otimes \eta) \in \Ext^0_X(\sheaf F, \sheaf F \otimes \Omega_{X, *}).
        \]
        Let us denote by $(u \circ \beta)' \in \Ext^2_X(\sheaf O_X, \iota^* \iota_* \sheaf O_X)$
        the element that corresponds to $u \cap \beta = u \circ \beta \in \HH_{-2}(X)=\Ext^2_{X\times X}(i_! \sheaf O_X, i_* \sheaf O_X)$
        under the adjunction
        $\iota_! \dashv \iota^*$. (Recall, $\iota_!(\sheaf F)=\iota_*(\sheaf F \otimes
        \omega_X\dual[-n])$ or, alternatively, $\iota_!=S_{X\times X}^{-1}\circ\iota_*\circ S_X$, where $S_Y$
        denotes the Serre functor on $Y$.)

        Let $\mu' \in \Ext^*_X(\iota^* \iota_* \sheaf O_X, \omega_X[n])$ be arbitrary and
        \[
            \mu = \iota_* \mu' \circ \eta \in \Ext^*_{X\times X}(\iota_* \sheaf O_X, \iota_* \omega_X[n])
        \]
        the corresponding element under the adjunction $\iota^* \dashv \iota_*$.

        By definition of $\iota_!$ via Serre duality on $X$ and $X\times X$, one has
        \[
            \int_X\mu' \circ (u \circ \beta)' = \int_{X \times X} \mu \circ u \circ \beta.
        \]

        It is $\beta = I_\HKR^{-1}(\ch(\sheaf F))$ the Hochschild--Chern character of $\sheaf F$
         (see~\cite{caldararu:mukai2}).
        By definition,
        \[
            \int_{X \times X} \mu \circ u \circ \beta
            = \int_X \tr_{\sheaf F} [q_*(p^* \sheaf F \otimes (\mu \circ u))].
        \]

        One has
        \[
            \begin{split}
                \int_X \mu' \circ (u \circ \beta)' & = \int_{X \times X} \mu \circ u \circ \beta \\
                & = \int_X \tr_{\sheaf F} [q_*(p^* \sheaf F \otimes (\mu \circ u))] \\
                & = \int_X \tr_{\sheaf F}
                    [q_*(p^* \sheaf F \otimes (\iota_* \mu' \circ \eta \circ \iota_* v \circ \iota_* I \circ \eta))] \\
                & = \int_X \mu' \circ \tr_{\sheaf F} [q_* (p^* \sheaf F \otimes (\eta \circ \iota_* v \circ \iota_* I \circ \eta))].
            \end{split}
        \]
        As $\mu'$ was arbitrary, it follows that
        \[
            (u \circ \beta)' = \tr_{\sheaf F} [q_* (p^* \sheaf F \otimes (\eta \circ \iota_* v \circ \iota_* I \circ \eta))].
        \]
        Applying $I$ to both sides yields:
        \begin{equation}
            \label{eq:cal}
            \begin{split}
                I_\HKR(u \cap \beta) & = I \circ (u \circ \beta)' \\
                & = \tr_{\sheaf F} [q_* (p^* \sheaf F \otimes (\iota_* I \circ \eta \circ \iota_* v \circ \iota_* I \circ \eta))] \\
                & = \tr_{\sheaf F} [\exp(\alpha)(\sheaf F) \circ v \circ \exp(\alpha)(\sheaf F)].
            \end{split}
        \end{equation}

        The rest of the proof is done by a case-by-case analysis:

\noindent {\bf The first case $v \in H^2(X, \sheaf O_X)$:} One has
        \[
            \begin{split}
               I_\HKR(u \cap \beta)&=\tr_{\sheaf F} [\exp(\alpha)(\sheaf F) \circ v \circ \exp(\alpha)(\sheaf
               F)]\\
                 &= \tr_{\sheaf F} [\exp(\alpha)(\sheaf F) \circ v] \\
                & = v \wedge \tr_{\sheaf F} [\exp(\alpha)(\sheaf F)]\\&  = v \wedge \ch(\sheaf F) \\
                & = v \inner \ch(\sheaf F).
            \end{split}
        \]
        Furthermore, $v = a^{-1} \inner v$.
        Thus, we have
        \[
            \begin{split}
                I_\Ko(u \cap \beta) & = \sah \wedge I_\HKR(u \cap \beta) \\
                & = \sah \wedge (v \inner \ch(\sheaf F)) \\
                & = v \inner (\sah \wedge \ch(\sheaf F)) \\
                & = (\sahi \inner v) \inner (\sah \wedge \ch(\sheaf F)) \\
                & = I^\Ko(u) \inner I_\Ko(\beta).
            \end{split}
        \]

\noindent
{\bf The second case $v \in H^1(X, \sheaf T_X)$:}
        In this case, $v \inner$ acts as a derivation (with respect to the wedge product). Thus, one has
        \[
            \begin{split}
I_\HKR(u \cap \beta)&=                \tr_{\sheaf F}
[\exp(\alpha)(\sheaf F) \circ v \circ \exp(\alpha)(\sheaf F)]\\
                & = \tr_{\sheaf F} [\exp(\alpha)(\sheaf F) \circ v \circ \alpha(\sheaf F)] \\
                & = \tr_{\sheaf F} [\exp(\alpha)(\sheaf F) \circ (v \inner \alpha(\sheaf F))] \\
                & = \tr_{\sheaf F} [v \inner \exp(\alpha)(\sheaf F)] \\
                & = v \inner \ch(\sheaf F).
            \end{split}
        \]
 Since the $(1,1)$-part of $\sahi$ is trivial, one clearly has $v = \sahi \inner
 v$. (In fact, an argument similar to the following shows that
 $v=b\inner v$ for any $b=1+\lambda{\rm c}_1(X)+\ldots$.)
 Furthermore, since $a=\hat A^{\frac 1 2}$ stays of pure type under any deformation of
 $X$,  also $v \inner \sah = 0$ (this follows also from Lemma~\ref{lem:second}). Hence
        \[
            \begin{split}
                I_\Ko(u \cap \beta)
                & = \sah \wedge I_\HKR(u \cap \beta) =\sah\wedge(v\inner\ch(\sheaf F))\\
                & = v \inner (\sah \wedge \ch(\sheaf F)) - (v \inner \sah) \wedge \ch(\sheaf F) \\
                & = (\sahi \inner v) \inner (\sah \wedge \ch(\sheaf F)) \\
                & = I^\Ko(v) \inner I_\Ko(\beta).
            \end{split}
        \]

\noindent
{\bf The final case $v \in H^0(X, \bigwedge^2 \sheaf
T_X)$:}
        By Lemma \ref{lem:hk} below, the right hand side
        of~\eqref{eq:cal} is $\sahi \wedge ((\sahi \inner v) \inner (\sah \wedge \ch(\sheaf F))$. Then we have
        \[
            \begin{split}
                I_\Ko(u \cap \beta) & = \sah \wedge I_\HKR(u \cap \beta) \\
                & = (\sahi \inner v) \inner (\sah \wedge \ch(\sheaf F)) \\
                & = I^K(u) \inner I_\Ko(\beta).
            \end{split}
        \]
    \end{proof}

    \begin{lemma}
        \label{lem:hk} Suppose $X$ is irreducible holomorphic
        symplectic.
        Let $v \in H^0(X, \bigwedge^2 \sheaf T_X) = \Hom(\Omega^2_X, \sheaf O_X)$ be
        a holomorphic bivector field and $\sheaf F \in \DCb(X)$. Then
        \[
            \tr_{\sheaf F} [\exp(\alpha)(\sheaf F) \circ v \circ \exp(\alpha)(\sheaf F)] =
            \sahi \wedge ((\sahi \inner v) \inner (\sah \wedge \ch(\sheaf F))).
        \]
    \end{lemma}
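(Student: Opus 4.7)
The plan is to interpret both sides of the identity diagrammatically and to invoke the wheeling theorem of Bar--Natan, Le, and Thurston, as announced in the introduction. By Kapranov's theorem the universal Atiyah class $\alpha$ endows $\sheaf T_X[-1]$ with the structure of a Lie algebra object in $\DCb(X)$, so the Chern character $\ch(\sheaf F)=\tr_{\sheaf F}[\exp(\alpha)(\sheaf F)]$ admits a canonical expansion as a formal sum of wheels, i.e.\ traces of iterated compositions of Atiyah classes against $\sheaf F$.

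First I would expand the left-hand side in the same diagrammatic language. Expanding both exponentials gives, for each pair $(m,n)$ of non-negative integers, a composition of $m$ copies of $\alpha$, then the bivector $v$, then $n$ further copies of $\alpha$; after applying $\tr_{\sheaf F}$ this contributes a wheel on $\sheaf F$ whose legs are joined in pairs, two of them being fused through the strut $v$. Summing and symmetrizing over $(m,n)$ expresses the left-hand side as the diagrammatic class obtained by inserting the $v$-strut into the sum of wheels representing $\ch(\sheaf F)$.

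Next I would analyse the right-hand side $a^{-1}\wedge\bigl((a^{-1}\inner v)\inner(a\wedge\ch(\sheaf F))\bigr)$. Here the factors $a=\hat A^{1/2}$ and $a^{-1}$ are precisely the Duflo/wheeling element and its inverse: by the wheeling theorem, $(a\wedge\cdot)$ and $(a^{-1}\wedge\cdot)$ intertwine the disjoint-union product in the graph algebra with the symmetric-algebra product on polyvector fields and forms, and the inner contractions then translate strut insertion on the module side into the explicit operations of the statement. The IHS hypothesis enters at exactly this point: the holomorphic symplectic form $\sigma$ supplies the non-degenerate invariant pairing on $\sheaf T_X$ required to make strut insertion well defined in the relevant graph algebra, while simultaneously identifying $v$ (up to scalar) with $\sigma^{-1}$, so that $v$-strut contraction is canonical and compatible with the weight system.

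The main obstacle will be the combinatorial bookkeeping: one must verify that the two diagrammatic expressions represent the \emph{same} graph class, with no stray Duflo correction surviving. Here the IHS assumption pays off again. Since $\omega_X\cong\sheaf O_X$ and $H^0(X,\sheaf T_X)=0$, the Todd class reduces to $\hat A$, the vanishing of $H^0(X,\sheaf T_X)$ kills the degree-one perturbations that would obstruct the identification, and Lemma~\ref{lem:second} prevents the appearance of $\mathrm{c}_1(X)$-type corrections such as those that were absorbed into the factor $\exp(-\mathrm{c}_1(X)/2)$ in the proof of Proposition~\ref{prop:module}. What remains is the classical Duflo/wheeling identity applied to a single strut insertion, which yields exactly the claimed formula.
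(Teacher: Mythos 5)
Your proposal follows essentially the same route as the paper: both sides are read off through the Rozansky--Witten weight system of $(X,\sigma,\sheaf F)$, with $a=\hat A^{1/2}$ appearing as the weight of the wheeling element $\Omega$ (Hitchin--Sawon) and the crux being a wheeling-theorem identity that lets the $v$-strut (weight $\sigma$, dual to $v$) pass through the $\Omega$-contraction --- exactly the content of the paper's Lemma~\ref{lem:wheeling2} and the computation \eqref{eq:cor}--\eqref{eq:final}. One superfluous point: your appeal to $H^0(X,\sheaf T_X)=0$ and Lemma~\ref{lem:second} to exclude $\mathrm{c}_1(X)$-corrections is vacuous here, since $\mathrm{c}_1(X)=0$ for an irreducible holomorphic symplectic manifold; the real remaining work is the diagrammatic bookkeeping you defer.
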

This lemma will be proven by means of graph homology in Appendix
\ref{app2}.


\section{Using $\HH^1$}

    The proof of the following result is a straightforward application of the invariance of the first Hochschild cohomology
    under derived equivalence.

    \begin{proposition}\label{mainprop:AV}
        Suppose $A$ is an abelian variety. If for a smooth projective variety $X$ there exists an exact equivalence
        $\DCb(A) \cong \DCb(X)$, then $X$ is as well an abelian variety.
    \end{proposition}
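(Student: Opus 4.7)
The plan is to exploit three layers of derived-equivalence invariance. Firstly, since any exact equivalence intertwines Serre functors, $S_X$ on $\DCb(X)$ coincides (up to natural isomorphism) with $S_A = [\dim A] = [n]$. Applied to $\sheaf O_X$, the identity $S_X(\sheaf O_X) = \omega_X[n]$ forces $\omega_X \cong \sheaf O_X$ and $\dim X = n$. Secondly, by the invariance of Hochschild cohomology recalled in Section \ref{Sect15}, one has
\[
\dim_\CC \HH^1(X) = \dim_\CC \HH^1(A) = 2n.
\]

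The heart of the proof is to decode this dimension via the HKR decomposition $\HH^1(X) \cong H^1(X, \sheaf O_X) \oplus H^0(X, \sheaf T_X)$. Using the trivialisation of $\omega_X$, contraction with a nowhere-vanishing section gives an isomorphism of sheaves $\sheaf T_X \cong \Omega^{n-1}_X$. Hodge symmetry together with Serre duality (applied to $X$ with trivial canonical bundle) then yield
\[
h^0(X, \sheaf T_X) = h^0(X, \Omega^{n-1}_X) = h^{n-1}(X, \sheaf O_X) = h^1(X, \sheaf O_X).
\]
Combined with the dimension count $h^1(\sheaf O_X) + h^0(\sheaf T_X) = 2n$, this forces the irregularity $q(X) = h^1(X, \sheaf O_X) = n$ and $h^0(X, \sheaf T_X) = n$.

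To finish, I would invoke two classical facts. Since $\omega_X$ is trivial, by the Bochner principle (or Lieberman's theorem) $\Aut^0(X)$ is an abelian variety, of dimension $h^0(X, \sheaf T_X) = n$. As an abelian variety acts properly, all its orbits are closed; an orbit of maximal dimension has dimension at most $n = \dim X$, and some orbit must attain this bound (since $X$ is irreducible of dimension $n$ and is the union of its orbits), so an orbit coincides with $X$. Writing $X \cong \Aut^0(X)/H$ for the (finite) stabiliser $H$ of a point, we conclude that $X$ is itself an abelian variety. The main conceptual point lies in the second paragraph: translating the derived invariant $\dim \HH^1$ into the sharp equality $q(X) = \dim X$, via the Calabi-Yau identification $\sheaf T_X \cong \Omega^{n-1}_X$. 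Everything else follows from standard structural results about groups acting on $K$-trivial projective varieties.
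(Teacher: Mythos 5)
Your route is genuinely different from ours: we pass to the Beauville--Bogomolov cover $\widehat X\cong B\times\prod Y_j\times\prod Z_k$, use the vanishing of $\HH^1$ for the symplectic and Calabi--Yau factors to force $\widehat X=B$, and then conclude via functoriality of the Albanese map that $X\to\Alb(X)$ is \'etale. You instead stay on $X$ itself: the identities $\omega_X\cong\sheaf O_X$, $\dim X=n$, $\dim\HH^1(X)=2n$, and the chain $h^0(X,\sheaf T_X)=h^0(X,\Omega_X^{n-1})=h^{n-1}(X,\sheaf O_X)=h^1(X,\sheaf O_X)$ are all correct and give $q(X)=h^0(X,\sheaf T_X)=n$ without ever invoking the decomposition theorem; this part is clean and arguably more economical than our argument. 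Note that the equality $q(X)=n$ is also what protects you from the examples in the remark following the proposition (non-abelian \'etale quotients of abelian varieties with trivial canonical bundle have $q<n$), so some genuinely geometric final step is unavoidable.

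That final step is where your argument has a gap. That $\Aut^0(X)$ is an abelian variety of dimension $h^0(X,\sheaf T_X)=n$ is fine (Matsumura/Chevalley, or Bochner). But the assertion that ``some orbit must attain the bound $n$ since $X$ is irreducible of dimension $n$ and is the union of its orbits'' is not a valid inference: an irreducible $n$-dimensional variety can perfectly well be covered by infinitely many pairwise disjoint closed orbits of dimension $<n$ (e.g.\ an elliptic curve acting by translation on the first factor of $E\times\mathbb P^1$ covers a surface by curves). The conclusion you want is in fact true for a \emph{faithful} action of an abelian variety of dimension $n$ on an $n$-dimensional $X$, but it requires an extra input: if all orbits had dimension $<n$, the general stabiliser would contain a positive-dimensional abelian subvariety, and by rigidity (an abelian variety has only countably many abelian subvarieties) a fixed such subvariety would stabilise a dense set of points and hence act trivially, contradicting faithfulness. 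Equivalently one may quote the Nishi--Matsumura theorem, or abandon the orbit argument at this point and use the Albanese map instead: by Kawamata, $a\colon X\to\Alb(X)$ is surjective with connected fibres, so $q(X)=n$ makes it birational, and $K$-triviality of both sides then forces it to be an isomorphism. With any one of these repairs your proof is complete; as written, the transitivity claim is unsupported.
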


    \begin{proof}
        The order of the canonical bundle and the dimension of a smooth projective variety are derived invariants
        (see e.g.~\cite[Prop.~4.1]{huybrechts:fm}). Hence, $\omega_X$ is trivial and $\dim(X) = \dim(A)$.

        By Theorem~\ref{thm:decomp}, there exists a finite \'etale cover $\pi\colon \widehat X \to X$
        with
        $\widehat X \cong
        B \times \prod_i Y_i \times \prod_j Z_j$ such that $B$ is an abelian
        variety, the $Y_i$ are irreducible holomorphic symplectic, and the $Z_j$
       are Calabi--Yau manifolds of dimension at least three.
        For the latter two types, the first Hochschild
        cohomology $\HH^1 \simeq H^0(\sheaf T) \oplus H^1(\sheaf O)$ is trivial.
        Thus the pull-back yields an injection
        \[
            \pi^*\colon \HH^1(X) \hookrightarrow \HH^1(B).
        \]
        On the other hand, the equivalence $\DCb(A) \simeq \DCb(X)$ induces an isomorphism $\HH^1(A) \cong \HH^1(X)$
        of vector spaces of dimension $2 \dim(A) = 2 \dim(X)$. Hence $2\dim(X) \leq 2 \dim(B)$.
        Therefore, $\widehat X = B$ and $\pi^*\colon \HH^*(X) \to \HH^*(B)$ must be bijective.

        Moreover, since the pull-back $\pi^*\colon \HH^1(X) \to \HH^1(B)$ respects the direct sum decomposition,
        both spaces $H^0(X, \sheaf T_X)$ and $H^1(X, \sheaf O_X)$ are of dimension $\dim(X)$. By
        functoriality of the Albanese morphism, the composition $B \to X \to \Alb(X)$ is
\'etale. Hence $X \to \Alb(X)$
        is étale as well and, therefore, $X$ is an abelian variety.
    \end{proof}

    \begin{remark}
        Note that there exist indeed \'etale quotients of abelian varieties with trivial canonical bundle
        which are not abelian varieties themselves (see~\cite[16.16]{ueno}). Those are however, due to the proposition,
        never derived equivalent to an abelian variety. This also explains why we had to use the Albanese map in
        the proof.
    \end{remark}

    \begin{remark}
        \label{rmk:sc}
        If in the proposition, one furthermore assumes that $X$ is an
        irreducible projective symplectic or a Calabi--Yau manifold of dimension at least three,
        then the contradiction is obtained more easily by using $\HH^1(A) \cong \HH^1(X)$ and the fact that
        $\HH^1(X) = H^0(X, \sheaf T_X) \oplus H^1(X, \sheaf O_X)$ is trivial for any such an $X$.
    \end{remark}

    \begin{remark}
        Alternatively to Remark \ref{rmk:sc} one could use the existence of spherical and projective objects
        (see~\cite{huybrechts:projective})
        on Calabi--Yau
        respectively irreducible symplectic manifolds (provided e.g.~by any line bundle),
        which do not exist on abelian varieties of dimension at least two.
    \end{remark}

    \section{Using $\HH^2$}

    The idea to prove that an irreducible holomorphic symplectic
manifold $Y$ can never be derived equivalent to a Calabi--Yau
manifold $Z$ of dimension at least three  goes as follows: The
infinitesimal deformations (commutative and non-commutative)
parametrised by the second Hochschild cohomology are identified
under any derived equivalence $\DCb(Y)\cong\DCb(Z)$,
i.e.~$\HH^2(Y) \cong \HH^2(Z)$. Algebraic classes $\alpha \in
\HO_0(Y)$ that stay algebraic under all these deformations are of
a very special form (see Proposition~\ref{prop:deforming}). In
particular, they have non-trivial square with respect to the Mukai
pairing. On the other hand, a skyscraper sheaf $k(z) \in \DCb(Z)$
does deform infinitesimally in all deformation directions
parametrised by $\HH^2(Z)$ (which are all commutative for $Z$ a
Calabi--Yau manifold), but its Mukai square is trivial.


\medskip

We shall use  the second Hochschild cohomology and $\cap:\HH^2(X)
\otimes \HH_0(X) \to \HH_{-2}(X)$ instead of the first Hochschild
cohomology as in the case of abelian
    varieties. Consider the two subspaces
    \[
        A(X) := \Set{\alpha \in \HH_0(X) \mid \forall v \in \HH^2(X): v \cap \alpha = 0}
    \]
    and
    \[
        R(X) := \Set{\alpha \in \HO_0(X) \mid
        \forall v \in \HT^2(X): v \inner \alpha = 0}.
    \]

    \begin{remark}
        The contraction with $v \in H^1(X, \sheaf T_X)$ of a cohomology class $\alpha \in H^{p, p}(X)$ measures
the change of the bidegree of $\alpha$ under the infinitesimal
deformation of $X$ corresponding to $v$ (Griffiths
transversality). In analogy, one should think of $R(X)$ as the set
of classes of pure type which stay of
        pure type under all infinitesimal deformations parametrised by the second Hochschild cohomology $\HH^2(X)$,
        which include the classical ones in $H^1(X, \sheaf T_X)$ as well as non-commutative and twisted ones in
        $H^0(X, \bigwedge^2 \sheaf T_X)$ and $H^2(X, \sheaf O_X)$, respectively.
    \end{remark}

    \begin{lemma}\label{lem:IKAR} Suppose $X$ has trivial canonical bundle. Then
        the isomorphism $I_\Ko\colon \HH_0(X) \stackrel{\sim}{\to} \HO_0(X)$ induces an isomorphism
        \[
           I_\Ko\colon A(X) \stackrel{\sim}{\longrightarrow} R(X).
        \]
        Moreover, if $\Phi:\DCb(X)\stackrel{\sim}{\to}\DCb(X')$ is an exact
        equivalence, then $\Phi^{\HH_*}$ defines an isomorphism
        $A(X)\cong A(X')$.
    \end{lemma}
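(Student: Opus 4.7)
The plan is to split the lemma into its two independent assertions and observe that each should follow formally from structural results already established. The first asserts a compatibility between $A(X)$ and $R(X)$ via $I_\Ko$ under the hypothesis of triviality of $\omega_X$, and the second asserts the invariance of $A(X)$ under derived equivalence.

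For the first part I would invoke Corollary \ref{cor:Calconj}: under $\omega_X\cong\sheaf O_X$, the isomorphism $I_\Ko\colon\HH_*(X)\stackrel{\sim}\to\HO_*(X)$ is $\HH^*(X)$-linear when $\HO_*(X)$ carries the module structure transported along $I^\Ko$. In particular, for $v\in\HH^2(X)$ and $\alpha\in\HH_0(X)$,
$$I_\Ko(v\cap\alpha)=I^\Ko(v)\inner I_\Ko(\alpha).$$
Since $I^\Ko$ is a graded $\CC$-algebra isomorphism $\HH^*(X)\stackrel{\sim}\to\HT^*(X)$, it restricts to a bijection in degree $2$. Hence $v\cap\alpha=0$ for every $v\in\HH^2(X)$ if and only if $w\inner I_\Ko(\alpha)=0$ for every $w\in\HT^2(X)$, which is precisely the statement that $I_\Ko$ sends $A(X)$ bijectively onto $R(X)$.

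For the second part I would use the compatibility recalled in Section \ref{Sect15}: an equivalence $\Phi$ induces a graded $\CC$-algebra isomorphism $\Phi^{\HH^*}$ together with a compatible graded module isomorphism $\Phi^{\HH_*}$, so that
$$\Phi^{\HH_*}(v\cap\alpha)=\Phi^{\HH^*}(v)\cap\Phi^{\HH_*}(\alpha).$$
Because $\Phi^{\HH^*}$ preserves the grading, it restricts to a bijection $\HH^2(X)\stackrel{\sim}\to\HH^2(X')$, and similarly $\Phi^{\HH_*}$ restricts to a bijection $\HH_0(X)\stackrel{\sim}\to\HH_0(X')$. For $\alpha\in A(X)$ and arbitrary $v'\in\HH^2(X')$, write $v'=\Phi^{\HH^*}(v)$; then $v'\cap\Phi^{\HH_*}(\alpha)=\Phi^{\HH_*}(v\cap\alpha)=0$, so $\Phi^{\HH_*}(\alpha)\in A(X')$. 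Applying the same argument to $\Phi^{-1}$ yields the reverse inclusion, establishing $A(X)\cong A(X')$.

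In sum, both assertions reduce to a rearrangement of definitions; the real content sits in Corollary \ref{cor:Calconj} and in the module-compatibility of $\Phi^{\HH_*}$ with $\Phi^{\HH^*}$. The only point where one needs to be a little careful is to verify that all the isomorphisms preserve the natural grading, so that $\HH^2$ and $\HH_0$ are sent to their counterparts---but this is already built into the cited statements, so I do not anticipate any genuine obstacle.
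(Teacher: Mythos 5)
Your proposal is correct and follows exactly the paper's own argument: the first assertion is deduced from Corollary \ref{cor:Calconj} and the second from the compatibility of $\Phi^{\HH_*}$ with $\Phi^{\HH^*}$ recalled in Section \ref{Sect15}. You have merely written out in detail the two reductions that the paper states in one line each.
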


    \begin{proof}
        This follows from the compatibility of the modified HKR-isomorphism with the module
        structures,
        see Corollary \ref{cor:Calconj}.

        The second assertion
    is a consequence of  the general fact that any exact equivalence
        induces an isomorphism of the Hochschild structure.
    \end{proof}

The above proof relies on Corollary \ref{cor:Calconj}, which in
turn uses \cite{vdbergh} and \cite{ramadoss}. If instead one
prefers to use Proposition \ref{prop:secondGraph}, one would get
the following result, which on the one hand works only for
algebraic classes, but, on the other hand, covers manifolds with
non-trivial canonical bundle.

\begin{lemma}\label{lem:IKARweak} Suppose that either
$H^0(X,\bigwedge^2\sheaf T_X)=0$ or that $X$ is irreducible
holomorphic symplectic. Let $\alpha\in \HH_0(X)$  be a class such
that $I_\Ko(\alpha)\in\HO_0(X)$ is algebraic. Then $\alpha\in
A(X)$ if and only if $I_\Ko(\alpha)\in R(X)$.
\end{lemma}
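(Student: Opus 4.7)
The plan is to derive the lemma directly from Proposition \ref{prop:secondGraph}. The hypothesis on $X$ there (either $H^0(X,\bigwedge^2\sheaf T_X)=0$ or $X$ irreducible holomorphic symplectic) coincides with the one here, and Proposition \ref{prop:secondGraph} establishes exactly the compatibility
\[
I_\Ko(u\cap\alpha)\;=\;I^\Ko(u)\inner I_\Ko(\alpha)
\]
for $u\in\HH^2(X)$ and algebraic $\alpha\in\HH_0(X)$. A preliminary check I would carry out is that the notion of algebraicity used in Proposition \ref{prop:secondGraph} (namely $I_\HKR(\alpha)=\ch(\sheaf F)$ for some $\sheaf F\in\DCb(X)$) matches the condition of the present lemma that $I_\Ko(\alpha)=a\wedge I_\HKR(\alpha)$ be algebraic; since $a=\hat A^{1/2}$ is invertible in $\HO_*(X)$, the two conditions cut out the same subspace of $\HH_0(X)$, so we are indeed in the scope of the proposition.

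With this in hand, the remainder of the argument is pure unwinding of definitions. Fixing $\alpha$ as in the statement, the displayed identity combined with $I_\Ko$ being a linear isomorphism yields, for every $u\in\HH^2(X)$, the equivalence $u\cap\alpha=0\Leftrightarrow I^\Ko(u)\inner I_\Ko(\alpha)=0$. Since $I^\Ko\colon\HH^2(X)\stackrel\sim\to\HT^2(X)$ is itself an isomorphism, the element $v:=I^\Ko(u)$ runs over all of $\HT^2(X)$ as $u$ runs over $\HH^2(X)$. Chaining these equivalences gives exactly that $\alpha$ annihilates all of $\HH^2(X)$ under $\cap$ if and only if $I_\Ko(\alpha)$ annihilates all of $\HT^2(X)$ under $\inner$, i.e.\ $\alpha\in A(X)\Leftrightarrow I_\Ko(\alpha)\in R(X)$.

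The main point I would emphasise is that there is no genuine obstacle beyond invoking Proposition \ref{prop:secondGraph}; the present lemma is simply that proposition rephrased in terms of the annihilator subspaces $A(X)$ and $R(X)$. The trade-off compared with Lemma \ref{lem:IKAR} is then transparent: Lemma \ref{lem:IKAR} rests on Corollary \ref{cor:Calconj} and hence on the full multiplicativity of $I_\Ko$, which is available under $\omega_X\cong\sheaf O_X$, whereas here the hypothesis on the canonical bundle is weakened at the cost of restricting to algebraic classes, a restriction which is still sufficient for the application to Theorem \ref{mainthm:HK}.
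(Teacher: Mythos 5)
Your proof is correct and follows the same route as the paper: the paper's own proof is the one-line observation that Proposition \ref{prop:secondGraph} gives $I_\Ko(v\cap\alpha)=I^\Ko(v)\inner I_\Ko(\alpha)$, "which suffices to conclude," and your unwinding via the isomorphisms $I^\Ko$ and $I_\Ko$ is exactly the intended conclusion. Your preliminary remark reconciling the two notions of algebraicity is a harmless (and slightly more careful) addition that the paper leaves implicit.
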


\begin{proof}
By Proposition \ref{prop:secondGraph} we have
$I_\Ko(v\cap\alpha)=I^\Ko(v)\inner I_\Ko(\alpha)$ for all $v\in
\HT^2(X)$, which suffices to conclude.
\end{proof}

    \begin{proposition}
        \label{prop:cy}
        Let $Z$ be a projective manifold of dimension $m$ with
$H^0(Z, \bigwedge^2 \sheaf T_Z) = 0$.
        Then $H^{m, m}(Z) \subset R(Z)$.

        If, moreover, $Z$ is a Calabi--Yau manifold  of dimension $m \ge 3$, then the following holds:
        \[
          (H^{0, 0} \oplus H^{1, 1} \oplus H^{m - 1, m - 1} \oplus H^{m, m})(Z) \subset R(Z).
        \]
    \end{proposition}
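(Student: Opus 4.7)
The plan is to unfold $R(Z)$ using the decomposition
\[
\HT^2(Z) = H^2(Z,\sheaf O_Z) \oplus H^1(Z,\sheaf T_Z) \oplus H^0(Z,\bigwedge^2\sheaf T_Z),
\]
and to check, for each listed class $\alpha \in H^{p,p}(Z) = H^p(Z,\Omega^p_Z)$ and each $v$ in one of these three summands, that $v \inner \alpha$ lies in a cohomology group forced to vanish. Recalling that contraction $H^p(Z,\bigwedge^q \sheaf T_Z) \otimes H^i(Z,\Omega^j_Z) \to H^{p+i}(Z,\Omega^{j-q}_Z)$ is homogeneous of the expected bidegree, everything reduces to bookkeeping.

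For the first assertion, I would fix $\alpha \in H^{m,m}(Z)$ and observe that contraction with the three summands of $\HT^2(Z)$ lands in $H^{m+2}(Z,\Omega^m_Z)$, $H^{m+1}(Z,\Omega^{m-1}_Z)$, and $H^m(Z,\Omega^{m-2}_Z)$ respectively. The first two vanish because the cohomological index exceeds $\dim Z = m$; the third contraction is identically zero because the entire summand $H^0(Z,\bigwedge^2\sheaf T_Z)$ is zero by hypothesis. Hence $H^{m,m}(Z) \subset R(Z)$.

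For the Calabi--Yau statement, the first step is to simplify $\HT^2(Z)$ itself. The trivialisation $\omega_Z \cong \sheaf O_Z$ yields an isomorphism $\bigwedge^2\sheaf T_Z \cong \Omega^{m-2}_Z$, and since $0 < m-2 < m$ the Calabi--Yau condition forces $H^0(Z,\bigwedge^2\sheaf T_Z) = 0$; Hodge symmetry combined with $H^0(Z,\Omega^2_Z)=0$ gives $H^2(Z,\sheaf O_Z) = 0$. Only $v \in H^1(Z,\sheaf T_Z)$ remains to be tested, and for $\alpha \in H^{p,p}(Z)$ the contraction lies in $H^{p+1}(Z,\Omega^{p-1}_Z)$. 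The cases $p \in \{0,m\}$ vanish for trivial dimensional reasons; $p = 1$ lands in $H^2(Z,\sheaf O_Z)=0$; and $p = m-1$ lands in $H^m(Z,\Omega^{m-2}_Z)$, which by Serre duality is dual to $H^0(Z,\Omega^2_Z) = 0$.

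Conceptually there is no genuine obstacle: once the bigrading is unwound, every relevant target group vanishes either because its cohomological index exceeds $\dim Z$ or because of the Calabi--Yau vanishing $H^0(Z,\Omega^i_Z) = 0$ for $0 < i < m$ (applied directly or through Serre duality / Hodge symmetry). The one bookkeeping point worth attending to is to identify $\bigwedge^2\sheaf T_Z$ with $\Omega^{m-2}_Z$ via the trivialisation of $\omega_Z$ before invoking the Calabi--Yau vanishing; otherwise the proof is a short case analysis on the three summands of $\HT^2(Z)$ and the four Hodge types of $\alpha$.
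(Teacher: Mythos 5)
Your proof is correct and follows essentially the same route as the paper: a case analysis over the three summands of $\HT^2(Z)$, with each contraction landing in a group that vanishes either for degree reasons or by the Calabi--Yau vanishing $H^0(Z,\Omega^i_Z)=0$ (applied via Hodge symmetry and Serre duality). The only difference is that you spell out the identification $\bigwedge^2\sheaf T_Z\cong\Omega^{m-2}_Z$ to justify $H^0(Z,\bigwedge^2\sheaf T_Z)=0$, a step the paper merely asserts.
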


    \begin{proof}
        The action of $HT^2(Z) \cong H^1(X, \sheaf T_X) \oplus H^2(X, \sheaf O_X)$ on $H^{m, m}(Z)$ is trivial
        for degree reasons. Thus, $H^{m, m}(Z) \subset R(Z)$.

Let now in addition $Z$ be Calabi--Yau of dimension at least
three. Thus, not only $H^0(Z, \bigwedge^2 \sheaf T_Z) = 0$ but
also  $H^2(Z, \sheaf O_Z) = 0$. Hence it suffices to show that the
contraction with classes in $H^1(Z, \sheaf T_Z) $ annihilates
$H^{q, q}(Z)$ for $q = 0, 1, m - 1, m$. This is trivial for $q =
0, m$. For $q = 1$ and $q = m - 1$ it follows from $H^{0, 2}(Z)
=0$ respectively $H^{m - 2, m}(Z) = (H^{2, 0}(Z))\dual = 0$ (use
Serre duality).
    \end{proof}

    \begin{proposition}
        \label{prop:deforming}
        Suppose $Y$ is a simply-connected, projective symplectic manifold of dimension $2n$. Then $R(Y)$ is the set of all
        classes $\alpha \in H^n(Y, \Omega_Y^n)$ satisfying
        \begin{enumerate}
            \item $\bar \sigma \wedge \alpha = 0$,
            \item $\Lambda_\sigma \alpha = 0$ and
            \item $\alpha \in H^n(Y', \Omega_{Y'}^n)$ for all deformations $Y'$ of $Y$
        \end{enumerate}
        for all holomorphic symplectic forms $\sigma \in H^{2, 0}(Z)$.
        Here, $\Lambda_\sigma$ denotes the dual Lefschetz operator to $L_\sigma := \sigma \wedge \cdot$
        (cf.\ \cite{fujiki}).
    \end{proposition}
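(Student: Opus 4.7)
The plan is to analyse how each of the three summands of
\[
\HT^2(Y) = H^2(\sheaf O_Y)\oplus H^1(\sheaf T_Y)\oplus H^0(\bigwedge\nolimits^2\sheaf T_Y)
\]
acts separately on the Hodge decomposition $\HO_0(Y)=\bigoplus_p H^p(Y,\Omega_Y^p)$. Writing $\alpha=\sum_p\alpha_p$ with $\alpha_p\in H^p(Y,\Omega_Y^p)$, the three contractions send $\alpha_p$ into three pairwise disjoint Hodge summands of $\HO_{-2}(Y)$: they shift the bidegree $(p,p)$ by $(0,2)$, $(-1,1)$ and $(-2,0)$ respectively. Hence $\alpha\in R(Y)$ if and only if each $\alpha_p$ is independently annihilated by each of the three summands.

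Next I would identify each summand's action with a classical Hodge-theoretic operator. For $Y$ simply-connected projective symplectic, $H^2(\sheaf O_Y)=H^{0,2}(Y)$ is spanned by the classes $\bar\sigma$ attached to holomorphic symplectic forms, and its action is cup product; this gives condition~(1). Using $\sigma$ to identify $\sheaf T_Y\cong\Omega_Y$, the summand $H^0(\bigwedge^2\sheaf T_Y)$ is spanned by the dual bivectors $\sigma^\vee$, and its action is contraction, which on cohomology coincides with the adjoint Lefschetz operator $\Lambda_\sigma$; this yields condition~(2). Finally, for $v\in H^1(\sheaf T_Y)$ the contraction $v\inner\alpha_p$ is the Kodaira--Spencer derivative of $\alpha_p$ along $v$ (Griffiths transversality); by the Bogomolov--Tian--Todorov unobstructedness of symplectic deformations, its simultaneous vanishing for all such $v$ is equivalent to $\alpha_p$ remaining of pure type $(p,p)$ on every small deformation $Y'$ of $Y$, i.e.\ condition~(3).

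To conclude that $\alpha_p=0$ unless $p=n$, I would invoke the Fujiki--Verbitsky analogue of Hard Lefschetz. The pointwise non-degeneracy of $\sigma$ yields a sheaf isomorphism $\sigma^{n-k}\wedge\cdot\colon\Omega_Y^k\stackrel{\sim}{\to}\Omega_Y^{2n-k}$ for $k\leq n$, so taking cohomology produces on each row $\bigoplus_k H^q(Y,\Omega_Y^k)$ an $\mathfrak{sl}_2$-action in which $H^q(Y,\Omega_Y^k)$ has weight $2(k-n)$. In particular $\ker\Lambda_\sigma\cap H^q(Y,\Omega_Y^k)=0$ whenever $k>n$, and specialising $q=p$, condition~(2) forces $\alpha_p=0$ for $p>n$. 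Dually, the Lefschetz isomorphism $L_{\bar\sigma}^{n-q}\colon H^q(Y,\Omega_Y^p)\stackrel{\sim}{\to}H^{2n-q}(Y,\Omega_Y^p)$ for $q\leq n$ shows that $L_{\bar\sigma}$ is injective on $H^q(Y,\Omega_Y^p)$ whenever $q<n$, so condition~(1) forces $\alpha_p=0$ for $p<n$. Conversely, every $\alpha\in H^n(Y,\Omega_Y^n)$ satisfying (1), (2) and (3) is annihilated by each summand of $\HT^2(Y)$, hence belongs to $R(Y)$.

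The main obstacle is the rigorous translation between the infinitesimal condition ``$v\inner\alpha_p=0$ for all $v\in H^1(\sheaf T_Y)$'' and the geometric condition~(3), which is phrased in terms of the full Kuranishi family; this step relies on the smoothness of that family, i.e.\ unobstructedness of deformations of holomorphic symplectic manifolds. Making precise the spanning statement ``$H^{0,2}(Y)$ is spanned by $\bar\sigma$ for symplectic $\sigma$'' requires a brief density argument when $Y$ is not irreducible but only simply-connected (so a product of irreducible holomorphic symplectic factors). The remainder is a routine bookkeeping of Hodge bidegrees combined with the two Hard Lefschetz isomorphisms attached to $\sigma$ and $\bar\sigma$.
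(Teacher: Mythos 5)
Your decomposition of $\HT^2(Y)$ into three summands, the observation that the three contractions land in disjoint Hodge summands so the conditions decouple, and the two Lefschetz arguments (the $\mathfrak{sl}_2$ attached to $\sigma$ acting on the holomorphic degree to kill $\alpha_p$ for $p>n$ via condition (2), and the conjugate Lefschetz attached to $\bar\sigma$ to kill $\alpha_p$ for $p<n$ via condition (1)) all match the paper's proof, which likewise reduces to irreducible factors via the decomposition theorem and then confines $\alpha$ to $H^n(Y,\Omega_Y^n)$ by exactly these two applications of Lefschetz theory.

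The genuine gap is in your treatment of the $H^1(Y,\sheaf T_Y)$-summand. You assert that the simultaneous vanishing of $v\inner\alpha$ for all $v\in H^1(Y,\sheaf T_Y)$ is \emph{equivalent} to $\alpha$ remaining of type $(n,n)$ on every small deformation, and you attribute this to Bogomolov--Tian--Todorov unobstructedness. Unobstructedness only gives smoothness of the Kuranishi base; it says nothing about the Hodge locus of $\alpha$ inside it. The condition $v\inner\alpha=0$ for all $v$ is the \emph{first-order} statement that the differential of the period map kills $\alpha$'s escape from $F^n$; in general the Hodge locus of $\alpha$ can be a proper analytic subset whose defining equations merely have vanishing differential at the base point, so first-order invariance does not imply invariance at all orders. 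This is precisely the direction needed for the inclusion $R(Y)\subseteq\{\alpha:\ (1),(2),(3)\}$ (the converse direction, condition (3) implies the infinitesimal vanishing, is the easy one). The paper closes this gap using input specific to hyper-K\"ahler geometry rather than unobstructedness: infinitesimal invariance forces $\alpha$ to stay of type $(n,n)$ on every hyper-K\"ahler rotation of $Y$, and since any two points of the moduli space are connected by twistor lines, invariance propagates to all deformations. You need to replace your appeal to BTT by this twistor-line (or equivalently Verbitsky's $\mathfrak{so}$-action on the subring generated by $H^2$) argument; as stated, your justification would equally ``prove'' the analogous false statement for a general Calabi--Yau manifold.
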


    \begin{proof}
        As we later only use that any class in $R(Y)$ satisfies (1)--(3), we only prove this inclusion.
        The other relies on the same arguments and is left to the
        reader. For the general theory of hyper-K\"ahler manifolds
        we refer to \cite{fujiki} and \cite{ghj}.

        By the Decomposition Theorem~\ref{thm:decomp}, we have $Y = \prod_i Y_i$,
        where each factor $Y_i$ is an irreducible projective symplectic manifold. Via the isomorphism $I^\Ko$, one has
        \begin{equation}
            \label{eq:hh2onhk}
            \HH^2(Y) \cong \bigoplus_i \left(H^2(Y_i, \sheaf O_{Y_i}) \oplus H^1(Y_i, \sheaf T_{Y_i})
            \oplus H^0(Y_i, \bigwedge\nolimits^2 \sheaf
            T_{Y_i})\right)
        \end{equation}
        with $H^2(X, \sheaf O_{Y_i})$ spanned by the anti-holomorphic symplectic form $\bar\sigma_i$ on $Y_i$,
        and $H^0(Y_i, \bigwedge^2 \sheaf T_{Y_i})$ identified with $H^0(Y_i, \sheaf O_{Y_i})$ by contraction
        with $\sigma_i$.

        Consider $\alpha \in R(Y) \subset \HO_*(Y)$. As the
        direct sum decomposition~\eqref{eq:hh2onhk} respects the decomposition
        $\HO_*(Y) \cong \bigotimes_i \HO_*(Y_i)$, it follows that
        $\alpha \in \bigotimes_i R(Y_i)$. Thus we may assume that $Y = Y_i$.

        Since $\alpha \in R(Y)$, we have $\bar\sigma \inner \alpha = 0$.
        Standard Lefschetz theory of the anti-holomorphic
        form $\bar \sigma$ on $Y$ then shows $\alpha \in\bigoplus_{p\geq n} H^{p}(Y, \Omega_Y^{p})$.

        Contraction with the canonical generator $v \in H^0(Y, \bigwedge^2 \sheaf T_Y)$
        is given by the dual Lefschetz operator to $\sigma$,
        i.e.~$v \inner \alpha = \Lambda_{\sigma}\alpha = 0$. Again by Lefschetz theory, this means that
        $\alpha \in \bigoplus_{p\le n}H^{p}(Y, \Omega_Y^{p})$ and together with
        the above result this yields
        $\alpha \in H^n(Y, \Omega_Y^n)$.

        By definition of $R(Y)$, we know that $v \inner \alpha = 0$ for all $v \in H^1(Y, \sheaf T_Y)$. Thus
        infinitesimally, $\alpha$ stays of type $(n, n)$. By the general theory of hyper-K\"ahler manifolds, this also
        shows that $\alpha$ stays of type $(n, n)$ on any hyper-Kähler rotation of $Y$. As any two
        deformations are connected by twistor lines, it follows that $\alpha$ is of type $(n, n)$ on any deformation
        of $Y$.
    \end{proof}

    \begin{remark}
        In other words, the only classes $\alpha$
        of pure type on a simply-connected holomorphic symplectic manifold $Y$ that
        stay of pure type under any infinitesimal deformation in $\HH^2(Y)$
        are the classes in $H^n(Y, \Omega_Y^n)$ that are
        primitive with respect to all holomorphic
        symplectic forms and their anti-holomorphic conjugates, and which stay of
        type $(n, n)$ under any classical deformation.
    \end{remark}

    \begin{remark}
        Note that for rational cohomology classes in $H^{n}(Y, \Omega_Y^n)$,
        so in particular for algebraic ones, conditions (1)
        and (2) are equivalent. Also note that a class satisfying (3) need not necessarily
        be primitive, e.g.~$c_2(Y)$ on a four-dimensional $Y$ is not.
    \end{remark}

    \begin{lemma}
        \label{lem:qform}
        Let $X$ be a smooth projective variety such that there exists an exact equivalence $\DCb(X) \cong \DCb(Y)$, where
        $Y$ is an irreducible projective symplectic manifold of dimension $2n$. Then there exists a non-degenerate
        quadratic form
        \[
            q\colon \HH^2(X) \to \sqrt[n]{\HH^{2n}(X)}
        \]
        such that
        $q(\alpha)^n = \alpha^{2n}$ for all $\alpha \in \HH^2(X)$ and such that the subring of $\HH^*(X)$ generated
        by $\HH^2(X)$ is given by
        \[
            \Sym\HH^2(X) = \Sym^*\HH^2(X)/\langle\alpha^{n + 1}\mid q(\alpha) = 0\rangle.
        \]
    \end{lemma}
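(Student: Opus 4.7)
The plan is to transport the entire statement from $X$ to the cohomology ring $H^*(Y,\mathbb{C})$, where it reduces to the classical combination of the Beauville--Bogomolov--Fujiki form and Verbitsky's theorem on the subring generated by $H^2$. Concretely, I would compose the three graded $\mathbb{C}$-algebra isomorphisms
\[
\HH^*(X)\xrightarrow{\Phi^{\HH^*}}\HH^*(Y)\xrightarrow{I^\Ko}\HT^*(Y)\xrightarrow{\Psi_\sigma}H^*(Y,\mathbb{C}),
\]
where the first comes from the derived equivalence (see Section~\ref{Sect15}), the second is the Kontsevich--HKR isomorphism, known to be multiplicative by \cite{vdbergh}, and the third, depending on a choice of holomorphic symplectic form $\sigma\in H^0(Y,\Omega^2_Y)$, is induced by the isomorphism $\sigma\colon \sheaf T_Y\stackrel\sim\to\Omega_Y$ and its exterior powers $\wedge^q\sheaf T_Y\cong\Omega^q_Y$.

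The first step is to verify that $\Psi_\sigma$ really is a graded \emph{ring} isomorphism: since $\sigma$ is a global section the induced map on exterior algebras is multiplicative, and cup product on sheaf cohomology is natural, so $\Psi_\sigma$ intertwines the wedge product on polyvector fields with cup product on $H^*(Y,\mathbb{C})$. In particular $\HT^{4n}(Y)\cong H^{4n}(Y,\mathbb{C})\cong\mathbb{C}$, and the line $\sqrt[n]{\HH^{2n}(X)}$ appearing in the statement is most naturally interpreted as the formal $n$-th tensor root of the one-dimensional top piece of $\HH^*(X)$ in which the powers $\alpha^{2n}$ live.

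The second step is to invoke, on the right-hand side, the non-degenerate Beauville--Bogomolov--Fujiki quadratic form $q_Y\colon H^2(Y,\mathbb{C})\to\mathbb{C}$ together with Fujiki's relation $\beta^{2n}=c_Y\cdot q_Y(\beta)^n$, and Verbitsky's theorem identifying the subring of $H^*(Y,\mathbb{C})$ generated by $H^2(Y,\mathbb{C})$ with $\Sym^* H^2(Y,\mathbb{C})/\langle\beta^{n+1}\mid q_Y(\beta)=0\rangle$. Pulling $q_Y$ and the presentation back along the displayed composition then yields both the desired $q$ on $\HH^2(X)$ and the ring presentation of $\Sym\HH^2(X)$.

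The main obstacle I anticipate is independence of the construction from the auxiliary choice of $\sigma$: rescaling $\sigma\mapsto\lambda\sigma$ scales $\Psi_\sigma$ by $\lambda^q$ on the $(p,q)$-summand and rescales $q_Y$ by a compensating factor, and one must carefully check that these cancellations make the resulting $q$ on $\HH^2(X)$ canonical. Once this compatibility is in place, non-degeneracy of $q$ and the ring presentation are immediate consequences of the corresponding facts for $q_Y$.
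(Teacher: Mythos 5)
Your proposal is correct and follows essentially the same route as the paper: both compose the derived-equivalence isomorphism, the multiplicative HKR isomorphism $I^\Ko$, and the isomorphism $\HT^*(Y)\cong H^*(Y,\Omega_Y^*)$ induced by the symplectic form, and then invoke the Beauville--Bogomolov--Fujiki relation together with Verbitsky's theorem. The dependence on the choice of $\sigma$ that you flag is absorbed by formulating the target of $q$ as the formal root $\sqrt[n]{\HH^{2n}(X)}$, which is why the paper does not dwell on it.
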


    \begin{proof}
        One has the following isomorphisms of graded rings:
        \[
            \HH^*(X) \cong \HH^*(Y) \cong \HT^*(Y) = H^*(Y, \bigwedge\nolimits^* \sheaf T_Y) \cong H^*(Y, \Omega_Y^*),
        \]
        where the first isomorphism is induced by the exact equivalence $\DCb(X) \cong \DCb(Y)$,
        the second isomorphism is the modified HKR-isomorphism $I^\Ko$,
        and the third isomorphism comes from the isomorpism $\sheaf T_Y \cong \Omega_Y$ induced by
        the symplectic form on $Y$.

        By~\cite{beauville:kummer, fujiki}, the map $\alpha \mapsto \alpha^{2n}$ on $H^2(Y, {\mathbb C})$ possesses an $n$-th root,
        namely the Beauville--Bogomolov quadratic form $q_Y$ of $Y$. By a result of Verbitsky (see
        \cite{bogomolov}), one has
        \[
            \Sym H^2(Y, {\mathbb C}) = \Sym^*H^2(Y, {\mathbb C})/\langle\alpha^{n + 1}\mid q(\alpha) = 0\rangle.
        \]
        Since $\HH^*(X) \cong H^*(Y, \Omega_Y^*)$, the claim of the Lemma follows.
    \end{proof}

    \begin{theorem}\label{mainthm:HK}
        Let $Y$ be an irreducible projective symplectic manifold. If for a smooth projective variety $X$ there
        exists an exact equivalence $\Phi\colon \DCb(Y) \cong \DCb(X)$, then $X$ is also an irreducible
        projective symplectic manifold.
    \end{theorem}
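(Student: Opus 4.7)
The plan is to apply the Bogomolov--Beauville Decomposition Theorem \ref{thm:decomp} to $X$ and to rule out every factorization except $X$ itself being a single irreducible projective symplectic manifold. From the derived equivalence one obtains the standard invariants $\omega_X \cong \sheaf O_X$ and $\dim X = 2n$, together with the compatible isomorphism of Hochschild structures $(\HH^*(X), \HH_*(X)) \cong (\HH^*(Y), \HH_*(Y))$ preserving the Hochschild--Chern character (Section \ref{Sect15}) and the Mukai pairing on $\HH_0$. Writing $\widehat X = \prod A_i \times \prod Y_j \times \prod Z_k$ for a finite \'etale cover $\widehat X \to X$, the abelian factors are ruled out exactly as in Proposition \ref{mainprop:AV}: the identity $\HH^1(X) \cong \HH^1(Y) = H^1(\sheaf O_Y) \oplus H^0(\sheaf T_Y) = 0$ (the first summand since $Y$ is simply connected, the second since $\sheaf T_Y \cong \Omega_Y$ makes $H^0(\sheaf T_Y) = H^{1,0}(Y) = 0$) forces the Albanese of $X$ to be trivial.

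The core step is to rule out any Calabi--Yau factor in the decomposition of $\widehat X$ by comparing point sheaves via the Mukai pairing. Assuming for contradiction such a factor of dimension $\geq 3$ exists, pick a smooth closed point $x \in X$ lying over it and set $\beta := I_\HKR^{-1}(\ch k(x)) \in \HH_0(X)$. On a Calabi--Yau factor the cohomologies $H^2(\sheaf O)$ and $H^0(\wedge^2 \sheaf T)$ vanish, so combining Proposition \ref{prop:cy} with a K\"unneth analysis on the decomposition of $\widehat X$ places $\ch k(x)$ in $R(X)$; then Corollary \ref{cor:Calconj} (applicable since $\omega_X \cong \sheaf O_X$) gives $\beta \in A(X)$. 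Transporting via $\Phi^{\HH_*}$, the class $\beta' := \Phi^{\HH_*}(\beta) \in A(Y)$ has image $\alpha := I_\Ko(\beta') \in R(Y)$; by Proposition \ref{prop:deforming}, $\alpha$ is a class in $H^n(Y, \Omega_Y^n)$ primitive with respect to every holomorphic symplectic form and its conjugate, and staying of type $(n,n)$ under every deformation of $Y$.

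The contradiction is produced by comparing Mukai pairings. On the one hand, since $\dim X > 0$, one has $\langle \beta, \beta \rangle = \chi(k(x), k(x)) = 0$; by the derived invariance of the pairing and the fact that Todd corrections vanish for degree reasons on middle Hodge classes, this translates to $\int_Y \alpha \wedge \alpha = 0$. On the other hand, rotating along the twistor family of $Y$ to a complex structure for which $\Re \sigma$ is K\"ahler turns primitivity with respect to $\sigma, \bar\sigma$ into K\"ahler primitivity, whereupon the Hodge--Riemann bilinear relations force $(-1)^n \int_Y \alpha \wedge \alpha > 0$ unless $\alpha = 0$. Hence $\alpha = 0$ and therefore $\beta = 0$, contradicting the non-vanishing of the Hochschild--Chern character of the nonzero sheaf $k(x)$. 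A product of several irreducible symplectic factors in $\widehat X$ is then excluded via Lemma \ref{lem:qform}: for such a product one can exhibit isotropic classes $\alpha \in \HH^2(X)$ with $\alpha^{n+1} \neq 0$ (e.g.\ $\alpha = \alpha_1 + \alpha_2$ with $q_1(\alpha_1) = -q_2(\alpha_2) \neq 0$), contradicting the Verbitsky relation transferred from $Y$; and the \'etale cover is shown to be trivial by numerical constraints (such as matching $\chi(\sheaf O_X) = \chi(\sheaf O_Y) = n+1$ with the degree of the cover and the Hodge structure of $\HH^2$). The main obstacle I anticipate is the Hodge--Riemann step: one must carefully transfer primitivity from the symplectic forms to a K\"ahler class via the twistor rotation, and handle the interaction of the Mukai pairing with the Todd class on middle-degree Hodge classes.
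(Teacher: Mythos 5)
Your central mechanism --- sending the point class $I_\HKR^{-1}(\ch k(x))$ into $A(X)$, transporting it to $A(Y)$ via $\Phi^{\HH_*}$ and the commutativity of \eqref{eq:diach}, landing in $R(Y)$ and hence in $H^n(Y,\Omega_Y^n)$ by Proposition \ref{prop:deforming}, and then playing $\chi(k(x),k(x))=0$ off against the Hodge--Riemann relations after a hyper-K\"ahler rotation --- is exactly the paper's argument. But the surrounding architecture has a genuine gap. You run this under the hypothesis ``$\widehat X$ has a Calabi--Yau factor'' and assert that Proposition \ref{prop:cy} plus a K\"unneth analysis puts $\ch k(x)$ into $R(X)$. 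Proposition \ref{prop:cy} requires $H^0(X,\bigwedge^2\sheaf T_X)=0$: the summands $H^2(X,\sheaf O_X)$ and $H^1(X,\sheaf T_X)$ of $\HT^2(X)$ annihilate the top class for degree reasons, but a non-zero bivector field contracts $H^{2n}(X,\Omega_X^{2n})$ non-trivially into $H^{2n}(X,\Omega_X^{2n-2})$. If $\widehat X$ contains a Calabi--Yau factor \emph{and} a symplectic factor (already for $X=Y_1\times Z_1$), then $H^0(X,\bigwedge^2\sheaf T_X)\neq 0$ and the point class is \emph{not} in $R(X)$, so your contradiction never gets started in precisely the mixed case you must exclude; the same vanishing is also needed to apply Lemma \ref{lem:IKARweak}. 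The paper avoids this by not decomposing $X$ at all: it assumes only that $X$ carries no symplectic form and deduces from Lemma \ref{lem:qform} (the transported Verbitsky presentation, which has no relations below degree $n+1$) that any $v\in H^0(X,\bigwedge^2\sheaf T_X)$ with $v^n=0$ must itself vanish, so $H^0(X,\bigwedge^2\sheaf T_X)=0$ outright. That dichotomy --- a symplectic form exists, or there are no bivector fields at all --- is what legitimizes the use of Proposition \ref{prop:cy}.

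The endgame is also under-argued. Once $X$ is known to be symplectic you must still show it is irreducible, and your two suggestions (isotropic classes with $\alpha^{n+1}\neq0$ on a product, and ``numerical constraints'' on the cover degree) are sketches rather than proofs: $\chi(\sheaf O_X)$ is not an evident derived invariant, and $H^1(X,\sheaf O_X)=0$ alone does not exclude an abelian factor of $\widehat X$, since only the deck-group invariants of $H^1(\widehat X,\sheaf O_{\widehat X})$ descend and these can vanish. The paper extracts the spaces $H^{k,0}(X)$ from the Verbitsky presentation of $\Sym\HH^2(X)$ and then invokes Proposition \ref{prop:ihs}, whose proof is a non-trivial holonomy argument (all factors of $\widehat X$ have equal dimension, leading to $e\cdot k\cdot(1+k)=2^k$). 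You should either cite that proposition or supply an argument of comparable substance; as written, this part of your proposal does not close.
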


    \begin{proof}
        As the order of the canonical bundle and the dimension are  derived invariants (see e.g.\ \cite[Prop.\ 4.1]{huybrechts:fm}), we have
         $\sheaf \omega_X \cong \sheaf O_X$ and $\dim X = \dim Y =: 2n$.

        Assume that $X$ does not possess a symplectic form. Then for any
        $v \in H^0(X, \bigwedge^2 \sheaf T_X)$ one would have  $v^n = 0$.
        By the description of $\Sym\HH^2(X)$ given in Lemma~\ref{lem:qform}, this would show that $v = 0$.
        Thus, $H^0(X, \bigwedge^2 \sheaf T_X) = 0$.

        Consider a closed point $x \in X$ and its structure sheaf $k(x)$. Its Mukai vector
        $\ch(k(x)) $ is the generator of $H^{2n}(X,
        \Omega_X^{2n})$, which by Proposition~\ref{prop:cy}  is contained in $R(X)$. As $\Phi$ is
        an equivalence, there exists a complex $\sheaf F_x \in \DCb(Y)$ with $\Phi(\sheaf F_x) \cong k(x)$.
It follows that $\gamma := \ch(\sheaf F_x)\wedge
\sqrt{\td(Y)}={\rm ch}(\sheaf F_x)\wedge a \in R(Y)$. Indeed, by
commutativity of~\eqref{eq:diach} and by Lemma \ref{lem:IKAR} or
\ref{lem:IKARweak}, one has
\begin{eqnarray*}
\Phi^{\HH_*}(I_\Ko^{-1}(\gamma))&=&\Phi^{\HH_*}(I_\HKR^{-1}(\ch(\sheaf
F_x)))=I_\HKR^{-1}(\ch(k(x)))\\&=&I_K^{-1}(\ch(k(x)))\in A(X)
\end{eqnarray*} and hence $I_\Ko^{-1}(\gamma)\in A(Y)$, which is equivalent
to $\gamma\in R(Y)$ (again by Lemma \ref{lem:IKAR} or
\ref{lem:IKARweak}).
        Due to
        Proposition~\ref{prop:deforming}, one thus has $\gamma \in H^n(Y, \Omega_Y^n)$.

        Since $\Phi$ is an equivalence, $\chi(\sheaf F_x, \sheaf F_x) = \chi(k(x), k(x)) = 0$.
        This leads to a contradiction as follows: On the one hand,
        \[
            0 = \chi(\sheaf F_x, \sheaf F_x) =(-1)^n\int_Y\gamma\wedge\gamma.
        \]
        (For the last  equality use  that $\gamma\in
        H^{n}(Y,\Omega_Y^n)$.)
        On the other hand, again due to Proposition~\ref{prop:deforming}, one knows that $\gamma \in H^n(Y, \Omega_Y^n)$
        is also of type $(n, n)$ on any hyper-K\"ahler rotation $Y'$ defined by a complex structure say $J$. Now write
        $\sigma = \omega_J + i \omega_K$, where $\omega_J$ is the K\"ahler form on $Y'$. Using
        $\bar\sigma \wedge \gamma = 0$ of Proposition~\ref{prop:deforming} and the fact that $\gamma$ as an algebraic Chern
        class is real, we find that $\omega_J \wedge \gamma = 0$, i.e.~$\gamma$ is an $\omega_J$-primitive $(n, n)$-class
        on $Y'$. Thus by the Hodge--Riemann bilinear relations
        \[
            \int_Y \gamma \wedge \gamma \neq 0,
        \]
        a contradiction. Thus our assumption has to be wrong and therefore
        $X$ admits a symplectic form.

        It remains to show that $X$ is irreducible symplectic.
        As
        \[
            \Sym\HH^2(X) = \Sym^*\HH^2(X)/\langle\alpha^{n + 1}\mid q(\alpha) = 0\rangle,
        \]
        it follows that the assumptions of Proposition~\ref{prop:ihs} in the appendix about the spaces $H^{k, 0}(X)$ are fulfilled, which
        finally proves the theorem.
    \end{proof}

    \begin{corollary}
        An irreducible projective symplectic manifold can never be derived equivalent to a Calabi--Yau manifold
        of dimension at least three.
        \qed
    \end{corollary}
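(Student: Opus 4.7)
The plan is to deduce the corollary directly from Theorem~\ref{mainthm:HK}, so the main task is really just to identify the contradiction between the two structures once that theorem is invoked; no further cohomological machinery should be needed.

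First I would argue by contradiction: suppose there exist an irreducible projective symplectic manifold $Y$ and a Calabi--Yau manifold $Z$ with $\dim Z \geq 3$ such that $\DCb(Y) \cong \DCb(Z)$. Applying Theorem~\ref{mainthm:HK} (with $X := Z$) yields that $Z$ must itself be an irreducible projective symplectic manifold. The contradiction then comes from comparing the Hodge-theoretic definitions given at the start of the paper: an irreducible holomorphic symplectic manifold carries an everywhere non-degenerate holomorphic two-form, so in particular $H^0(Z,\Omega^2_Z) \neq 0$, whereas by definition a Calabi--Yau manifold satisfies $H^0(Z,\Omega^i_Z) = 0$ for all $0 < i < \dim Z$, and since $\dim Z \geq 3$ this forces $H^0(Z,\Omega^2_Z) = 0$.

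The only step that requires any thought is checking that no additional hypothesis of Theorem~\ref{mainthm:HK} needs to be verified on the $Z$-side: the theorem takes $X$ to be an arbitrary smooth projective variety derived equivalent to an irreducible projective symplectic manifold, and a Calabi--Yau manifold is certainly smooth projective (for $\dim \geq 3$, as noted in the very first paragraph of the paper). So the application is immediate.

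I do not expect a genuine obstacle here; the substance of the argument is contained entirely in Theorem~\ref{mainthm:HK}, and the corollary is essentially a definitional unpacking. The only minor point worth flagging is that one could equally well argue by contradicting simple-connectedness plus uniqueness of the symplectic form versus $h^{2,0}=0$, but the cleanest route is the two-form comparison above.
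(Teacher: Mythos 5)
Your proposal is correct and is exactly the argument the paper intends: the corollary is stated with a \qed as an immediate consequence of Theorem~\ref{mainthm:HK}, since a Calabi--Yau manifold of dimension at least three has $H^0(Z,\Omega^2_Z)=0$ and so cannot be holomorphic symplectic. Nothing further is needed.
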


    As another step towards answering our general Question~\ref{qu:general}, we prove the following partial result:
    \begin{proposition}
        Let $Y_1, \ldots, Y_n$ and $Y'_1, \ldots, Y'_m$ be irreducible
        projective symplectic varieties.
        Set $Y := \prod_i Y_i$ and $Y' := \prod_j Y_j'$.
        Assume that there exists an exact
        equivalence $\DCb(Y) \cong \DCb(Y')$. Then there exists a bijection
        $\sigma\colon \Set{1, \ldots, n} \to \Set{1, \ldots, m}$ with $\dim(Y_i) = \dim(Y'_{\sigma(i)})$ and
        $b_2(Y_i) = b_2(Y'_{\sigma(j)})$.
    \end{proposition}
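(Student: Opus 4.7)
The plan is to pass from the derived equivalence to an isomorphism of graded cohomology rings, extract from the cup-power $\alpha\mapsto\alpha^{2n}$ on degree two a polynomial factorisation, and read off the combinatorics of the factors via unique factorisation.

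Since dimension and the order of the canonical bundle are derived invariants, $\omega_Y\cong\omega_{Y'}\cong\sheaf O$ and $\dim Y=\dim Y'=:2n$. Combining the Kontsevich--van den Bergh ring isomorphism $I^\Ko\colon\HH^*\stackrel\sim\to\HT^*$ with the identifications $\sheaf T_{Y_i}\cong\Omega_{Y_i}$ coming from the symplectic forms (and analogously on the $Y'_j$), and applying Künneth, I obtain graded ring isomorphisms $\HH^*(Y)\cong H^*(Y,\mathbb{C})$ and $\HH^*(Y')\cong H^*(Y',\mathbb{C})$. Composing with $\Phi^{\HH^*}$ from Section \ref{Sect15} yields a graded $\mathbb{C}$-algebra isomorphism
\[
  \Psi\colon H^*(Y,\mathbb{C})\stackrel{\sim}{\to}H^*(Y',\mathbb{C}),
\]
whose restriction $\Psi^2$ to $H^2$ intertwines the cup-power maps $\alpha\mapsto\alpha^{2n}$ up to a nonzero scalar, since $\Psi$ identifies the one-dimensional top-degree pieces.

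Next I invoke the Fujiki relation \cite{beauville:kummer,fujiki}: for each irreducible symplectic factor $Y_i$ of dimension $2n_i$ there is a non-degenerate quadratic form $q_i$ on $H^2(Y_i,\mathbb{C})$ (the Beauville--Bogomolov form) and a constant $c_i\neq0$ with $\alpha_i^{2n_i}=c_i\cdot q_i(\alpha_i)^{n_i}$. Writing $\alpha=\sum_i\alpha_i\in H^2(Y,\mathbb{C})=\bigoplus_iH^2(Y_i,\mathbb{C})$, Künneth gives
\[
  \alpha^{2n}=\binom{2n}{2n_1,\ldots,2n_r}\prod_i\alpha_i^{2n_i}=C\prod_iq_i(\alpha_i)^{n_i}
\]
in $H^{4n}(Y,\mathbb{C})\cong\mathbb{C}$, and similarly $\beta^{2n}=C'\prod_jq'_j(\beta_j)^{n'_j}$ on $Y'$. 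Transporting the second identity along $\Psi^2$ produces, inside the polynomial ring $\mathbb{C}[H^2(Y,\mathbb{C})]$, an equality
\[
  C\prod_iq_i^{n_i}=\lambda C'\prod_j(q'_j\circ\Psi^2)^{n'_j}
\]
for some $\lambda\neq0$.

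To conclude I apply unique factorisation. Each irreducible symplectic $Y_i$ satisfies $b_2(Y_i)\geq3$ (as $H^2$ contains the holomorphic symplectic class, its conjugate, and a Kähler class), so each $q_i$ is a non-degenerate quadratic form in at least three variables and is therefore irreducible as a polynomial over $\mathbb{C}$; moreover the $q_i$ involve pairwise disjoint sets of coordinates under the decomposition $H^2(Y,\mathbb{C})=\bigoplus_iH^2(Y_i,\mathbb{C})$, hence are pairwise non-associate, and the same holds for the $q'_j\circ\Psi^2$. Unique factorisation then supplies a bijection $\sigma\colon\{1,\dots,n\}\to\{1,\dots,m\}$ with $n_i=n'_{\sigma(i)}$ and $q_i$ proportional to $q'_{\sigma(i)}\circ\Psi^2$; the former gives $\dim Y_i=\dim Y'_{\sigma(i)}$, and comparing the ranks of the non-degenerate quadratic forms forces $b_2(Y_i)=b_2(Y'_{\sigma(i)})$. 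I expect the only real hurdle to be the careful bookkeeping required for the chain of ring isomorphisms in the first step; the genuine mathematical input is the irreducibility of each $q_i$, which is precisely where $b_2(Y_i)\geq3$ enters.
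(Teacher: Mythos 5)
Your proof is correct and is essentially the paper's own argument: the paper considers the hypersurface $Q=\Set{[\alpha]\mid \alpha^{2n}=0}\subset\mathbb{P}(\HH^2(Y))$ and matches its irreducible components (quadrics of rank $b_2(Y_i)$) across the derived equivalence, which is exactly the geometric form of your unique-factorisation argument for $\alpha^{2n}=C\prod_i q_i(\alpha_i)^{n_i}$. Your version usefully makes explicit two points the paper leaves implicit, namely that $b_2(Y_i)\ge 3$ guarantees irreducibility of each $q_i$ and that the exponents $n_i$ recover the dimensions.
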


    \begin{proof}
        Consider the variety
        \[
            Q := \Set{[\alpha] \mid \alpha^{\dim Y} = 0} \subset {\mathbb P}(\HH^2(Y)).
        \]
        By Lemma~\ref{lem:qform}, the irreducible components of $Q$ are in bijection to the irreducible factors $Y_i$ of $Y$.
        Furthermore, each irreducible component $Q_i$ is a quadric whose rank is the second Betti number of
        the corresponding irreducible factor. As $Q$ is a derived invariant, the claim of the Proposition follows.
    \end{proof}

 \begin{remark}
For the time being we cannot exclude that a Calabi--Yau manifold
$Z$ is derived equivalent to a smooth projective variety $X$ which
itself is not Calabi--Yau (in our restrictive sense). E.g.\ we can
neither exclude that $X$ is a finite \'etale quotient of a
Calabi--Yau manifold nor that it is a product of Calabi--Yau
manifolds with possibly an irreducible holomorphic symplectic
factor. An abelian factor can easily be excluded by studying the
first Hochschild cohomology.
 \end{remark}
    \appendix

    \section{Irreducible holomorphic symplectic manifolds}

The following result might be known to the experts, but we were
unable to find a reference for it.
    \begin{proposition}
        \label{prop:ihs}
        Let $X$ be a holomorphic symplectic manifold of complex dimension $2n$
        such that $H^{k, 0}(X) \cong \CC$ for $k = 0, 2, \ldots, 2n$ and $H^{k, 0}(X) = 0$ otherwise.

        Then $X$ is simply-connected, i.e.~$X$ is an irreducible holomorphic symplectic manifold.
    \end{proposition}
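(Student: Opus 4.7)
The plan is to apply the Bogomolov--Beauville decomposition (Theorem~\ref{thm:decomp}) to $X$ and then use the hypothesis on $H^{*,0}(X)$ to eliminate all factors of the decomposition except a single irreducible holomorphic symplectic one, while simultaneously forcing the étale cover to have degree one.

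First I would replace the étale cover by its Galois closure $\pi\colon\widehat X\to X$, which is again a product of tori, irreducible holomorphic symplectic, and Calabi--Yau factors (because the latter two kinds are simply connected, only a torus factor can acquire a nontrivial étale cover). Writing $\widehat X = T\times\prod_j Y_j\times\prod_k Z_k$ and $G$ for the Galois group, the hypothesis gives
\[
\chi(\mathcal O_X) = \sum_{k=0}^{2n}(-1)^k h^{k,0}(X) = n+1 > 0.
\]
Combining multiplicativity of $\chi(\mathcal O)$ for products, $\chi(\mathcal O_{\widehat X}) = |G|\,\chi(\mathcal O_X)$, and the vanishings $\chi(\mathcal O_T) = 0$ for $\dim T > 0$ and $\chi(\mathcal O_{Z_k}) = 0$ for odd-dimensional Calabi--Yau factors, I would conclude that $T$ is a point and each $Z_k$ has even dimension. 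Next, the symplectic form $\sigma_X$ pulls back to a nowhere-degenerate $2$-form on $\widehat X$; since $H^{1,0}(Y_j)=H^{1,0}(Z_k)=0$, Künneth gives $H^{2,0}(\widehat X) = \bigoplus_j\mathbb C\,\sigma_j$, so $\pi^*\sigma_X = \sum c_j\sigma_j$ would vanish in every $Z_k$-direction, contradicting non-degeneracy unless there are no $Z_k$ factors. Hence $\widehat X = \prod_{j=1}^r Y_j$.

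Now $h^{2,0}(X)=1$, together with $c_j\neq 0$ for all $j$, forces $G$ to act transitively on $\{Y_j\}$ with each stabilizer acting trivially on the corresponding $\sigma_j$; accordingly every $Y_j$ is isomorphic to a fixed irreducible holomorphic symplectic manifold $Y$ of dimension $2m$ with $n=rm$, and the Euler characteristic relation reads $|G|(n+1) = (m+1)^r$.

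The main obstacle is to show $r=1$. Here my plan is to exploit the freeness of the $G$-action on $\widehat X = Y^r$ via a holomorphic Lefschetz calculation. For any $g\in G\setminus\{e\}$, decompose the induced permutation of $\{1,\ldots,r\}$ into cycles. For every cycle of length $\ell\ge 2$, the relation $g^\ell = e$ forces the cyclic composition of the corresponding symplectic automorphisms $\phi_\ell\cdots\phi_1$ to equal the identity, and a direct computation then identifies the fixed locus of $g$ along those factors with the graph $\{(y,\phi_1(y),\phi_2\phi_1(y),\ldots)\mid y\in Y\}\cong Y$, which is non-empty. On any trivial cycle, $g$ acts by a finite-order symplectic automorphism $\phi$ of $Y$; using $\phi^*\bar\sigma^k = \overline{\phi^*\sigma^k} = \bar\sigma^k$, the holomorphic Lefschetz number computes to
\[
L_{\mathrm{hol}}(\phi) = \sum_{k=0}^{2m}(-1)^k\,\tr\bigl(\phi^*\mid H^k(Y,\mathcal O_Y)\bigr) = \sum_{k=0}^{m} 1 = m+1 \neq 0,
\]
so $\phi$ has fixed points. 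In either case $g$ has non-empty fixed locus on $\widehat X$, contradicting freeness; therefore $G$ is trivial. The identity $(m+1)^r = rm+1$ then forces $r=1$ by Bernoulli's inequality, and we conclude $X = \widehat X = Y_1$ is irreducible holomorphic symplectic, hence simply-connected.
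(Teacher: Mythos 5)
Your argument is correct in substance but follows a genuinely different route from the paper's at almost every stage. The paper first invokes a hyper-K\"ahler metric and the holonomy decomposition, so that Calabi--Yau factors are excluded automatically (the holonomy sits in $\Sp(n)$) and abelian factors are excluded via the fundamental group; you instead kill the torus factor with $\chi(\sheaf O_{\widehat X})=|G|\,\chi(\sheaf O_X)=|G|(n+1)>0$ and the Calabi--Yau factors with the K\"unneth computation $H^{2,0}(\widehat X)=\bigoplus_j\CC\sigma_j$ against non-degeneracy of $\pi^*\sigma_X$, which is more elementary. Both proofs then use $h^{2,0}(X)=1$ to get transitivity of the relevant group on the factors. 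The decisive difference is the endgame: the paper realises the deck group inside the normaliser of $\Sp(n_1)\times\cdots\times\Sp(n_k)$ in $\Sp(n)$, uses the two-dimensionality of the $N$-invariants in $\bigwedge^4\CC^{2n}$ to force all factors to be K3 surfaces, and finishes with the Diophantine equation $e\cdot k\cdot(1+k)=2^k$; you avoid the reduction to $m=1$ entirely by showing directly, via the Atiyah--Bott holomorphic Lefschetz theorem, that every non-identity deck transformation would have a fixed point (Lefschetz number $m+1\neq 0$ on a factor it preserves, a twisted diagonal on a non-trivial cycle), contradicting freeness. This buys a cleaner and more conceptual conclusion at the price of importing the fixed-point theorem, whereas the paper stays within holonomy theory and invariant theory. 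Note that both you and the paper implicitly use that automorphisms of $\prod_j Y_j$ permute the irreducible factors (uniqueness of the decomposition); the paper encodes this in its Lemma on the normaliser, while you assert it.

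One local slip: for a cycle of length $\ell\ge 2$ you claim that ``$g^\ell=e$'' forces $\phi_\ell\cdots\phi_1=\mathrm{id}$. But $g$ need not have order $\ell$, so $\psi:=\phi_\ell\cdots\phi_1$ may well be a non-trivial automorphism of $Y$. This does not break the proof: a fixed point of $g$ on that block is exactly a point $(y,\phi_1(y),\ldots)$ with $\psi(y)=y$, the invariance of $\sum_jc_j\sigma_j$ gives $\psi^*\sigma=\sigma$, and your own Lefschetz computation (applied now to $\psi$ rather than only to the stabilisers of trivial cycles) shows $\psi$ has a fixed point whenever $\psi\ne\mathrm{id}$. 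With that one-sentence repair the argument is complete.
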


    \begin{proof}
        Let $\pi\colon \widehat X \to X$ be a cover of $X$ as in the Decomposition Theorem~\ref{thm:decomp}
        and let  $d$ be its degree.

        Assume that $\widehat X$ contains an abelian factor. In this case, the fundamental group of $X$ would contain an infinite
        free abelian group, which contradicts the assumption that $H^{1, 0}(X) = 0$.
        Thus, $\widehat X$ is simply-connected.

        Endow $X$ with a hyper-K\"ahler metric $g$,
        which induces a hyper-K\"ahler metric also on $\widehat X$. By the Decomposition Theorem~\ref{thm:decomp},
        $\widehat X$ splits into a product $Y_1 \times \ldots \times Y_k$ of irreducible hyper-K\"ahler manifolds.
        Set $n_i := \frac 1 2 \dim Y_i$.

        The holomorphic Euler characteristic of $\widehat X$ is given by
        \[
            \chi(\widehat X, \sheaf O_{\widehat X}) = \chi(Y_1) \cdots \chi(Y_k) = (1 + n_1) \cdots (1 + n_k),
        \]
        while the holomorphic Euler characteristic of $X$ is by assumption
        \[
            \chi(X, \sheaf O_X) = 1 + n.
        \]
        As $\chi(\widehat X, \sheaf O_{\widehat X}) = d \cdot \chi(X, \sheaf O_X)$, one has the following
        equality:
        \begin{equation}
            \label{eq:chi}
            d \cdot (1 + n) = (1 + n_1) \cdots (1 + n_k).
        \end{equation}

        Fix a point $p \in \widehat X$ and let $q:=\pi(p)$. The splitting $\widehat X = Y_1 \times \cdots \times Y_k$ induces
        a splitting $\sheaf T_X(p) \cong \CC^{2 n_1} \oplus \cdots \oplus \CC^{2 n_k}$ of the
        holomorphic tangent space at $p$ as a unitary vector space.
        We can choose the isomorphism in such a way that under it the holonomy group of $\widehat X$ at $p$ is
        identified with $H^0 := \Sp(n_1) \times \cdots \times \Sp(n_k)$ with
        its natural action.

        Let $H$ be the holonomy group of $X$ at $q$
        acting unitarily on $\CC^{2n} = \CC^{2 n_1} \oplus \cdots \oplus
\CC^{2 n_k}$. Then $H/H^0$ is a certain quotient group of
$\pi_1(X, q)$.

        By the holonomy principle, the space $H^{2, 0}(X) = H^0(X, \Omega^2_X)$ is given by the invariants of
        $\bigwedge^2 \CC^{2n}$ under the action of $H$. By assumption,
$H^{2, 0}(X)$ is spanned by a unique (up to scaling) holomorphic
symplectic form $\sigma$ and its pull-back can be written as
$\pi^* \sigma = \sigma_1 + \cdots + \sigma_k$
        with $\sigma_i$ a symplectic form on $Y_i$. We can choose the isomorphism
        $\sheaf T_X(p) \cong \CC^{2 n}$ from above such that $\pi^* \sigma(p)$ becomes the
        standard symplectic form on $\CC^{2n}$. In particular, $\sigma_i(p)$ becomes the standard symplectic
        form on the summand $\CC^{2 n_i}$, which we denote again by $\sigma_i$. In this way, we realize
         $H$ as a subgroup of the standard $\Sp(n)$.

         As $H^0$ is a normal subgroup of $H$,
        it follows that $H$ is a subgroup of the normaliser $N$ of $H^0$ in $\Sp(n)$.
        By Lemma~\ref{lem:norm} below, there is a group homomorphism $\rho\colon N \to \SG k$ such that
        $A \sigma_i = \sigma_{\rho(A)(i)}$ for all $i = 1, \ldots, k$ for $A \in N$. Furthermore, one clearly has $n_i = n_{\rho(A)(i)}$.
        Denote the image of $H$ under $\rho$ by $\mathfrak S$.

        Since $H^{2, 0}(X)$ is one-dimensional, the holonomy
        principle yields that
        the space of invariants of $\bigwedge^2 \CC^{2n}$ under the action of $H$ is
         one-dimensional as well. On the other hand, the space of invariants under the action of $H^0$ is given by
        elements of the form $\lambda_1 \sigma_1 + \cdots + \lambda_k \sigma_k$ with $\lambda_i \in \CC$ arbitrary.

        Thus $\mathfrak S$ has to act transitively on the $\sigma_1, \ldots, \sigma_k$.
        It follows that $m := n_1 = \ldots = n_k$, i.e.~all factors are of the same dimension. Furthermore,
        the order of $\mathfrak S$ has to be a multiple of $k$.

        Assume that $m > 1$. We may further
        assume that $k > 1$, as otherwise $d = 1$ by~\eqref{eq:chi} and $X$ would
        already  be simply-connected.
        Then the space of invariants of $\bigwedge^4 \CC^{2n}$ under the action of $N$
        (and thus $H$)
        includes the two-dimensional subspace spanned by
        $\sum_i \sigma_i^2$ and $ \sum_{i < j} \sigma_i \sigma_j$. Hence $\dim (H^{4, 0}(X)) > 1$
        by the holonomy principle, contradicting the assumption. Thus  only the case $m = 1$ remains.

        By construction,  $\mathfrak S$ is a subquotient of $\pi_1(X, q)$ and, therefore,
        its order divides  $d=|\pi_1(X, q)|$.
        In particular, $d$ has to be a multiple of $k$, say $d = e \cdot k$ for a natural number $e$.
        Together with \eqref{eq:chi} this yields the equation (with $n_i = m = 1$)
        \[
            e\cdot k\cdot (1 + k) = 2^k
        \]
        having  $e = k = 1$ as its unique solution. Thus $X$
        is already simply-connected.
    \end{proof}

    \begin{lemma}
        \label{lem:norm}
        Let $n = n_1 + \ldots + n_k$ be a partition of a natural number.
        Consider $\CC^{2n} = \CC^{2 n_1} \oplus \cdots \oplus \CC^{2 n_k}$ with its standard
        hermitian complex-symplectic structure. Denote the standard symplectic form on the summand
        $\CC^{2n_i}$ by $\sigma_i$.

        Let $N$ be the normaliser of
        $G := \Sp(n_1) \times \cdots \times \Sp(n_k)$ in $\Sp(n)$.
        The natural action of $\Sp(n)$ induces a natural action of $N$ on $\CC^{2n}$ and $\bigwedge^2 \CC^{2n}$.

        Then there is a group homomorphism
        $\rho\colon N \to \SG k$ such that $A \sigma_i = \sigma_{\rho(A)(i)}$
        for all $A \in N$ and $i = 1, \ldots, k$.

        Furthermore, the image of $\rho$ is a subgroup containing only permutations $\tau$ with
        $n_{\tau(i)} = n_i$ for all $i = 1, \ldots, k$.
    \end{lemma}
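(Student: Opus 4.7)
The plan is to exploit three structural facts in sequence: (a) conjugation by $A\in N$ is an automorphism of $G$ and thus permutes its (almost) simple factors; (b) the decomposition $\CC^{2n}=\bigoplus V_i$, with $V_i:=\CC^{2n_i}$, is canonically determined by the $G$-representation structure, so $A$ permutes the $V_i$ accordingly; (c) the $G_i$-invariant symplectic form on $V_i$ is unique up to scalar, and the scalar is then forced to be $1$ by the fact that $A\in\Sp(n)$ preserves the standard symplectic form on $\CC^{2n}$.

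First, I would observe that since $A\in N$ normalises $G$, conjugation gives a Lie group automorphism of $G$. Writing $G_i:=1\times\cdots\times\Sp(n_i)\times\cdots\times 1$ for the $i$-th (almost) simple normal factor, any automorphism of $G$ permutes these factors; this defines a map $\rho\colon N\to\SG{k}$ characterised by $A\,G_i\,A^{-1}=G_{\rho(A)(i)}$. Since conjugation is multiplicative, $\rho$ is a group homomorphism; and since $\Sp(n_i)\cong\Sp(n_{\rho(A)(i)})$, one necessarily has $n_i=n_{\rho(A)(i)}$, which already settles the last assertion of the lemma.

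Next, I would show $AV_i=V_{\rho(A)(i)}$. The key point is that $V_i$ can be characterised intrinsically as the unique irreducible sub-$G$-representation of $\CC^{2n}$ on which $G_i$ acts non-trivially; even when several $n_i$ coincide, the summands remain pairwise non-isomorphic as $G$-representations, because they are distinguished by \emph{which} factor acts non-trivially. Since $A$ intertwines the $G$-action via conjugation by $A$ itself, it must carry $V_i$ to the subspace on which $A\,G_i\,A^{-1}=G_{\rho(A)(i)}$ acts non-trivially, namely $V_{\rho(A)(i)}$.

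Finally, I would deduce that $A\sigma_i$ is supported on $V_{\rho(A)(i)}\times V_{\rho(A)(i)}$ (since $A$ maps $V_i$ to $V_{\rho(A)(i)}$, and the orthogonal decomposition with respect to $\sigma=\sum_j\sigma_j$ is likewise permuted) and that its restriction to that summand is $G_{\rho(A)(i)}$-invariant; by uniqueness of the invariant symplectic form on the standard representation of $\Sp(n_{\rho(A)(i)})$, one has $A\sigma_i=\lambda_i\,\sigma_{\rho(A)(i)}$ for some scalar $\lambda_i\in\CC$. To pin down $\lambda_i=1$, I would use that $A\in\Sp(n)$ preserves $\sigma=\sum_j\sigma_j$, yielding $\sum_i\lambda_i\,\sigma_{\rho(A)(i)}=\sum_j\sigma_j$, and then compare coefficients using the linear independence of the $\sigma_j$ (which live on disjoint summands). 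The main subtlety to guard against is the canonical characterisation of $V_i$ inside $\CC^{2n}$ when two factors have equal rank; the argument succeeds because each $G_j$ acts trivially on the other summands, providing an intrinsic labelling of the $V_i$ even in the presence of repetition.
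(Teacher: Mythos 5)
Your proof is correct, but it follows a genuinely different route from the paper's. The paper argues entirely at the level of the two-forms: since $N$ normalises $G$, it preserves the space of $G$-invariants in $\bigwedge^2\CC^{2n}$, which is spanned by $\sigma_1,\ldots,\sigma_k$, so $A\sigma_i=\sum_j\lambda_i^j\sigma_j$; because the $\sigma_j$ are supported on disjoint summands, the rank of such a combination is $\sum_{\lambda_i^j\neq 0}2n_j$, and an induction on the rank (starting from the $\sigma_i$ of minimal rank and using invertibility of $A$, resp.\ $A^{-1}$) forces $A\sigma_i=\lambda_i\sigma_{\rho(A)(i)}$ with $n_i=n_{\rho(A)(i)}$; the normalisation $\lambda_i=1$ is then extracted from $A\sigma=\sigma$ exactly as in your argument. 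You instead work through structure theory: conjugation by $A$ permutes the almost simple factors $G_i$ (equivalently, the simple ideals of the Lie algebra), the summands $\CC^{2n_i}$ are recovered as the pairwise non-isomorphic isotypic components of the $G$-action (the point you rightly flag, that they stay non-isomorphic even when $n_i=n_j$ because they are distinguished by which factor acts non-trivially), and hence $A$ permutes the subspaces themselves, after which the statement about the forms follows from the one-dimensionality of the space of $\Sp(m)$-invariant alternating forms on $\CC^{2m}$. Your route uses heavier input but is more conceptual, proves slightly more (that $A$ permutes the subspaces $\CC^{2n_i}$, not merely the forms $\sigma_i$), and makes the homomorphism property of $\rho$ and the constraint $n_{\rho(A)(i)}=n_i$ immediate from the definition; the paper's rank induction is more elementary and stays entirely within the linear algebra of two-forms. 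Both arguments are complete.
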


    \begin{proof}
        As $N$ is the normaliser of $G$, it maps the space of $G$-invariants  into itself. The space of
        invariants of $G$ is spanned by $\sigma_1$, \ldots, $\sigma_k$.

        Let $A \in N$ and pick $i$ such that $\sigma_i$ is of minimal rank
        (i.e.~$n_i$ is minimal among the $n_1, \ldots, n_k$). By the same
         reasoning as in the proof of the Proposition, we have $A \sigma_i = \sum_j
        \lambda^j_i \sigma_j$ for certain $\lambda^j_i \in \CC$.
        The rank of a two-form does not change
        under a general linear transformation. Thus
        the sum in $A \sigma_i$ can only consist of one term, i.e.~
        we have to have $A \sigma_i = \lambda_i \sigma_{\rho(A)(i)}$ with $n_i = n_{\rho(A)(i)}$
        for a certain $\rho(A)(i)$ and $\lambda_i\in \CC^*$.

        In this way, we define $\rho(A)(i)$ for all $i$ with minimal $n_i$. As $A$ is invertible,
        $\rho(A)$ must be a permutation of those $i$ for which $n_i$ is
        minimal.

        Suppose we have shown that all $A\in N$ permute (up to scaling) the forms
        $\sigma_\ell$ of rank $2n_\ell<r$ and let $\sigma_i$ be of rank
        $r$. Then $A \sigma_i = \sum_j \lambda^j_i \sigma_j$ for certain $\lambda^j_i \in \CC^* $.
        Clearly, $r=\rk (\sigma_i)=\rk(A\sigma_i)=\sum_{\lambda_j\ne0}\rk(\sigma_j)$. Thus, either
        $A\sigma_i= \lambda_i^j\sigma_j$ for some $j$ or $A\sigma_i$ is a linear combination of symplectic forms
        $\sigma_j$ of strictly  smaller rank. As $A$ is invertible, the latter would contradict the assumption
        that $A^{-1}$ permutes all $\sigma_\ell$ of rank $<r$. Hence $A\sigma_i=\lambda_i^j\sigma_j=\lambda_i\sigma_{\rho(A)(i)}$
         and $\rk(\sigma_i)=\rk(\sigma_{\rho(A)(i)})$, i.e.\ $n_i=n_{\rho(A)(i)}$. Hence, $A$ permutes
forms $\sigma_\ell$ of rank $\leq r$ up to  scaling by
$\lambda_\ell\in\CC^*$.

Continuing in this way, we obtain a permutation $\rho(A)$ of
$\Set{1, \ldots, k}$ with
        $n_{\rho(A)(i)} = n_i$ and such that
        $A\sigma_i = \lambda_i \sigma_{\rho(A)(i)}$ for certain $\lambda_i\in\CC^*$.
        It is clear that $A \mapsto \rho(A)$ is a group homomorphism.

        Now $\sigma = \sigma_1 + \cdots + \sigma_k$ is  invariant under the action of the group $\Sp(n)$, thus also
        under the normaliser $N$, and hence
        $\lambda_1 = \ldots = \lambda_k = 1$, which proves the Lemma.
    \end{proof}

\section{Proof of Lemma \ref{lem:hk}}\label{app2}

    \subsection{Graph homology}

    We need to introduce a bit of notation from graph homology. We follow mostly~\cite{thurston} and refer the reader
    it
    for details:

    A \emph{Jacobi diagram} is a uni-trivalent graph with a chosen cyclic orientation of the set of half-edges at each
    trivalent vertex. By
    \[
        \mathcal A(\astar_{x_1} \cdots \astar_{x_k} \, \orint_{y_1} \cdots \orint_{y_l} \, \orcir_{z_1} \cdots
        \orcir_{z_m})
    \]
    we denote the (suitably completed, see~\cite[Sect.\ 2.5, 3.3]{thurston})
    ${\mathbb Q}$-vector space spanned by all Jacobi diagrams whose set of univalent vertices is partitioned into sets
    labelled by
    $x_1, \ldots, x_k$, linearly ordered sets labelled by $y_1, \ldots, y_l$ and cyclicly ordered sets labelled by $z_1, \ldots, z_m$
    modulo the AS, IHX and STU relations (for the definition of the AS, IHX and STU
    relations, see~\cite{thurston}). Elements of these spaces are called \emph{graph homology classes}.
    In what follows, let $X$ and $X'$ denote disjoint collections of disjointly
    labelled stars, oriented intervals, and oriented circles.

    \begin{example}
        By $_x\astrut_y \in \mathcal A(\astar_x \, \astar_y)$ we denote the graph homology class defined by the Jacobi diagram
        consisting of two univalent vertices labelled with $x$ and $y$, respectively. We call this graph homology class a
        \emph{strut}.
    \end{example}

    There are commutative, associative products
    \begin{eqnarray*}
        \mathcal A(\astar_{x_1} \cdots \astar_{x_k} \, X) \otimes \mathcal A(\astar_{x_1} \cdots \astar_{x_k} \, X')
        &\to& \mathcal A(\astar_{x_1} \cdots \astar_{x_k} \, X \, X'),\\
        C \otimes D &\mapsto& C \cup_{x_1 \cdots x_k} D,
    \end{eqnarray*}
    which are given by the disjoint union of the underlying Jacobi diagrams.
    There are further associative products
    \[
        \mathcal A(\orint_x \, X) \otimes \mathcal A(\orint_x \, X') \to \mathcal A(\orint_x \, X \, X'),\quad
        C \otimes D \mapsto C \juxta_x D,
    \]
    which are given by juxtaposition of the orders of the univalent vertices labelled by $x$.
    There is a natural linear map
    \[
        \pi_x\colon \mathcal A(\astar_x \, X) \to \mathcal A(\orint_x \, X),
    \]
    which is given by averaging over all possibilities to linearly order
the univalent vertices labelled by $x$. This map is an isomorphism
of vector spaces (see, e.g.~\cite[Sect.\ 3.3]{thurston}). Thus we
can identify the two spaces.

    There is another natural linear map
    \[
        \tr_x\colon \mathcal A(\orint_x \, X) \to \mathcal A(\orcir_x \, X),
    \]
    which is given by applying the forgetful map that turns the linear order of the univalent vertices labelled by $x$ into
    a cyclic order. By~\cite{thurston}, this map is an isomorphism whenever $X$ consists only of oriented circles, i.e.~does not
    include stars or oriented intervals.

    We introduce bilinear forms
    \[
        \mathcal A(\astar_{x_1} \cdots \astar_{x_k} \, X) \otimes
        \mathcal A_0(\astar_{x_1} \cdots \astar_{x_k} \, X') \to \mathcal A_0(X \, X'),\quad
        C \otimes D \mapsto \langle C, D\rangle_{x_1 \cdots x_k}
    \]
    which are given by summing over all ways of gluing all $x_i$-labelled univalent vertices of $C$ pairwise to all
    $x_i$-labelled univalent vertices of $D$, $i = 1 \ldots k$.
    Here $\mathcal A_0(\cdots)$ denotes the subspace of $\mathcal A(\cdots)$ generated
    by those Jacobi diagrams that do not include any component $_{x_i}\astrut_{x_i}$.

    We shall also need an operator that allows us to relabel graph homology classes: Let $\Delta_x^y$ be the linear operator
    that is given by replacing the labels of $y$ by $x$.
    For vertices labelled with stars, this operation can be
    extended: Let
    \[
        \Delta_{x_1 x_2}^y\colon \mathcal A(\astar_y \, X) \to \mathcal A(\astar_{x_1} \, \astar_{x_2} X)
    \]
    be the linear operator whose value is the sum over all ways of replacing the labels $y$ by one of $x_1$ and $x_2$.

    \begin{example}
        One has
        \[
            \langle C, D_1 \cup_x D_2\rangle_x = \langle \Delta_{y_1 y_2}^x C, (\Delta_{y_1}^x D_1) \otimes
                (\Delta_{y_2}^x D_2)\rangle_{y_1 y_2}
        \]
        when either $C$ or $D_1$ and $D_2$ having no struts $_x\astrut_x$.
    \end{example}

    In~\cite{thurston},
    D.~Thurston also introduces diagrammatic differential operators $\partial_C$. Here they are denoted slightly differently:
    Let
    \[
        \mathcal A_0(\astar_x \, X) \otimes \mathcal A(\astar_x \, X') \to \mathcal A(\astar_x \, X \, X'),\quad
        C \otimes D \mapsto C \inner_x D,
    \]
    be the differential operator (with respect to $\cup_x$) that is given by summing over all ways of gluing all $x$-labelled
    univalent vertices of $C$ to some $x$-labelled univalent vertices of $D$.
    (We can also define the inner product if the right hand side has no struts, but the left hand side (possibly) has.)

    \begin{example}
        This differential operator can be expressed in terms of the inner product $\langle\cdot,\cdot\rangle$, namely
        \[
            \Delta^x_y (C \inner_x D) = \langle C \cup_x \exp(_x\astrut_y), D\rangle_x.
        \]
        Here, $\exp$ means the exponential series with respect to the product $\cup_{xy}$.
    \end{example}

    \subsection{Weight systems}

    The reason why we have recalled these facts about graph homology is that graph homology classes define certain cohomology classes
    on holomorphic symplectic manifolds, the so-called~\emph{Rozansky--Witten classes}
    (for a detailed treatment see, e.g.,~\cite{nieper:book} or~\cite{rowi}): Let $C \in \mathcal A(\astar_x \, \orint_y)$
    be a graph homology class which is
    represented by a Jacobi diagram $\Gamma$, whose univalent are labelled by $x$ and $y$, and which has a linear order of the
    univalent vertices labelled by $y$ chosen. Let the number of trivalent vertices be $k$, the number of univalent
    vertices labelled $x$ be $\ell$ and the number of univalent vertices labelled $y$ be $m$.

    Let $\mathfrak g$ be a finite-dimensional
    metric Lie algebra over ${\mathbb C}$, i.e.~a Lie algebra endowed with a scalar product
    $\langle\cdot, \cdot\rangle$ with respect to which the
    adjoint maps are skew-symmetric. Let $E$ be a finite-dimensional $\mathfrak g$-module. Then we can construct an element
    \begin{equation}
        \Gamma(\mathfrak g, \langle\cdot,\cdot\rangle, E) \in \Sym^\ell \mathfrak g \otimes \End(E)
    \end{equation}
    as follows: Use the scalar product on $\mathfrak g$ to identify $\mathfrak g$ and $\mathfrak g\dual$. In particular,
    we can view the Lie bracket as an element in $\mathfrak g^{\otimes 3}$. Take one tensor copy of this element for each of the $k$
    trivalent vertices in $\Gamma$ and one copy of the scalar product as an element
    in $\mathfrak g^{\otimes 2}$ for each edge connecting two univalent vertices.
    Use the scalar product to contract the resulting tensor
    tensor along the edges of
    the graph $\Gamma$ connecting trivalent vertices. This process yields an element in
    $\mathfrak g^{\otimes \ell} \otimes \mathfrak g^{\otimes m}$.
    The action of $\mathfrak g$ on $E$ can be viewed as a map $\mathfrak g \to \End(E)$. Applying this map to the last $m$ tensor
    factors in our element in $\mathfrak g^{\otimes \ell} \otimes g^{\otimes m}$ and then composing the endomorphisms in the order given
    by the linear order of the univalent vertices labelled $y$ yields an element in $\mathfrak g^{\otimes l} \otimes \End(E)$. Finally
    project down to the symmetric tensors to get a well-defined element
    $\Gamma(\mathfrak g, \langle\cdot, \cdot\rangle, E) \in \Sym^\ell \mathfrak g \otimes \End(E)$. Using the properties of the Lie bracket,
    the ad-invariance of the scalar product and the fact that the map $\mathfrak g \to \End(E)$ is a morphism of Lie algebras allows one
    to show that $\Gamma(\mathfrak g, \langle\cdot, \cdot\rangle, E)$ does only depend on the graph homology class $C$ of $\Gamma$
    and not on $\Gamma$ itself. Thus, we have defined an element
    \[
        C(\mathfrak g, \langle\cdot, \cdot\rangle, E) \in \Sym^\ell \mathfrak g \otimes \End(E).
    \]
Note that for $m=0$ we obtain an element in $\Sym^\ell\mathfrak
g\cong S^\ell \mathfrak g \otimes \id_E$.
    Given a graph homology class $D \in \mathcal A(\astar_x \, \orcir_y)$, we set
    \[
        D(\mathfrak g, \langle\cdot, \cdot\rangle, E) := \tr_E C(\mathfrak g, \langle\cdot,
        \cdot\rangle, E) \in \Sym^\ell \mathfrak g,
    \]
    where $C$ is any lift of $D$ under the natural map $\tr_y\colon \mathcal A(\astar_x \, \orint_y) \to \mathcal A(\astar_x \, \orcir_y)$.
    This is a well-defined assignment due to the cyclic invariance of the trace map $\tr_E$ on $\End(E)$.

    This construction applies to holomorphic symplectic manifolds as follows: The (shifted) tangent sheaf $\sheaf T_X[-1]$
    of a holomorphic symplectic manifold $X$ can be viewed as a Lie algebra in the bounded
    derived category of $X$: The Lie bracket
    is given by the Atiyah class
    \[
        A(\sheaf T_X)\colon \sheaf T_X[-1] \otimes \sheaf T_X[-1] \to \sheaf T_X[-1].
    \]
    (In fact, this is true on any complex manifold.)
    A holomorphic symplectic form $\sigma$ endows $\sheaf T_X[-1]$ with an invariant metric (of degree $2$)
    \[
        \sigma\colon \sheaf T_X[-1] \otimes \sheaf T_X[-1] \to \sheaf O_X[-2].
    \]
    Finally, every bounded complex $\sheaf F$ of coherent sheaves becomes a $\sheaf T_X[-1]$-module, where the module action
    is given by the Atiyah class
    \[
        A(\sheaf F)\colon \sheaf T_X[-1] \otimes \sheaf F \to \sheaf F.
    \]
    As the above construction of $C(\mathfrak g, \langle\cdot, \cdot\rangle, E)$ also works in the more general context of Lie algebra
    objects in suitable categories, one can associate
    to each such triple $(X, \sigma, \sheaf F)$ classes
    $$        C(X, \sigma, \sheaf F)  \in H^k(X, \Omega_X^\ell \otimes \sheaf E nd(\sheaf
        F))$$
(resp.\ $C(X, \sigma) = C(X, \sigma, \sheaf F)\in H^k(X,\Omega_X^\ell)$ if $m=0$)
and
$$D(X, \sigma, \sheaf F)  \in H^k(X, \Omega_X^\ell).$$
    (Here, we have identified $\Omega_X$ with $\sheaf T_X$ by means of the symplectic form.)
    The maps $C \mapsto C(X, \sigma, \sheaf F)$ and $D \mapsto D(X, \sigma, \sheaf F)$ are called the \emph{(Rozansky--Witten)
    weight system given by $(X, \sigma, \sheaf F)$}.

    The following examples all follow from the definitions (see also~\cite{nieper:book}).
    \begin{example}
        Let $\Omega_x \in \mathcal A(\astar_x)$ be the so-called Wheeling element (see~\cite[Sect.\ 2.7]{thurston}). Then
        \[
            \Omega_x(X, \sigma) = \sah(X) \in \HO_0(X)
        \]
        (see~\cite{hitchin-sawon}).
    \end{example}

    \begin{example}
        Consider $_x\astrut_x \in \mathcal A(\astar_x)$. One has
        \[
            _x\astrut_x(X, \sigma, \sheaf F) = 2 \sigma.
        \]
    \end{example}

    \begin{example}
        Let $C \in \mathcal A(\astar_x \, X)$ and $D \in \mathcal A(\astar_x)$. Then
        \[
            (C \cup_x D)(X, \sigma, \sheaf F) = C(X, \sigma, \sheaf F) \wedge D(X, \sigma, \sheaf F).
        \]
    \end{example}

    \begin{example}
        Let $C \in \mathcal A(\astar_x \, \orint_y)$ and $D \in \mathcal A(\orint_y)$. Then
        \[
            (C \juxta_y D)(X, \sigma, \sheaf F) = C(X, \sigma, \sheaf F) \circ D(X, \sigma, \sheaf F).
        \]
    \end{example}

    \begin{example}
        Consider $\exp({_x \astrut_y}) \in \mathcal A(\astar_x \, \orint_y)$. Then
        \[
            \exp({_x \astrut_y})(X, \sigma, \sheaf F) = \exp(\alpha)(\sheaf F).
        \]
    \end{example}

    \begin{example}
        Let $C \in \mathcal A(\astar_x)$ and $D \in \mathcal A(\astar_x)$. Then
        \[
            (C \inner_x D)(X, \sigma, \sheaf F) = C(X, \sigma, \sheaf F) \inner_x D(X, \sigma, \sheaf F),
        \]
        where we view $C(X, \sigma, \sheaf F)$ as an element in $\HT^*(X)$ by means of the isomorphism
        $H^p(X,\bigwedge^q\sheaf T_X) \cong H^p(X,\Omega_X^q)$ induced by the symplectic form $\sigma$.
    \end{example}

    \begin{example}
        Let $\pi\colon \mathcal A(\astar_x \, X) \to \mathcal A(\orint_x \, X)$ be the natural map. Then
        \[
            \pi(C)(X, \sigma, \sheaf F) = C(X, \sigma, \sheaf F) \circ \exp(\alpha)(\sheaf F),
        \]
        where we view $C(X, \sigma, \sheaf F)$ as an element in $\HT^*(X)$ by means of the isomorphism
       $H^p(X,\bigwedge^q\sheaf T_X) \cong H^p(X,\Omega_X^q)$ as
       above.
    \end{example}

    \subsection{The proof}

    \begin{proof}[Proof of Lemma~\ref{lem:hk}]
Only the case $v \neq 0$ needs a proof, but then we may assume
that $v$ is dual to the holomorphic symplectic form $\sigma$.

        By Lemma~\ref{lem:wheeling2} below, we have
        \begin{equation}
            \label{eq:cor}
            \tr_x [(\Omega_x \inner_x \exp(_x\astrut_y)) \juxta_x (\Omega_x \inner_x \frac 1 2 {_x\astrut_x})]
            = \tr_x [\Omega_x \inner_x (\exp(_x \astrut_y) \cup_x \frac 1 2 {_x\astrut_x})].
        \end{equation}
        It is easy to check that
        \[
            \Omega_x \inner_x(\exp(_x\astrut_y) \cup_x \frac 1 2 {_x\astrut_x})
            = \frac 1 2 {_y\astrut_y} \inner_y (\Omega_y \cup_y \exp(_x\astrut_y)).
        \]
        Furthermore, one has
        \[
            \Omega_x \inner_x \exp(_x\astrut_y) = \Omega_y \cup_y \exp(_x\astrut_y),
        \]
        so~\eqref{eq:cor} becomes
        \begin{equation}
            \label{eq:final}
            \begin{split}
                &\Omega_y \cup_y \tr_x [\exp(_x\astrut_y) \juxta_x (\Omega_x \inner_x \frac 1 2
                {_x\astrut_x})]\\
                =&  \tr_x [(\Omega_x \inner_x \exp(_x\astrut_y)) \juxta_x (\Omega_x \inner_x \frac 1 2 {_x\astrut_x})] \\
                =& \tr_x [\frac 1 2 {_y\astrut_y} \inner_y (\Omega_y \cup_y \exp(_x\astrut_y))] \\
                =&  \frac 1 2 {_y\astrut_y} \inner_y (\Omega_y \cup_y \tr_x [\exp(_x\astrut_y)]).
            \end{split}
        \end{equation}

        Applying the Rozansky--Witten weight system given by $(X, \sigma, \sheaf F)$ to~\eqref{eq:final} then yields
        \[
            \begin{split}
                \sah \wedge \tr_{\sheaf F} [\exp(\alpha)(\sheaf F) \circ (\sah \inner v) \circ \exp(\alpha)(\sheaf
                F)]
                & = v \inner (\sah \wedge \tr_{\sheaf F} [\exp(\alpha)(\sheaf F)]) \\
                & = v \inner (\sah \wedge \ch(\sheaf F)),
            \end{split}
        \]

        from which the claim of the Lemma easily follows.
    \end{proof}

    \begin{lemma}
        \label{lem:wheeling2}
        The following relation holds in the graph homology space $\mathcal A(\orcir_x \, \astar_y)$:
        \[
            \tr_x[(\Omega_x \inner_x X) \juxta_x (\Omega_x \inner_x Y)] = \tr_x [\Omega_x \inner_x (C \cup_x D)]
        \]
        for all $C \in \mathcal A(\astar_x \, \astar_y)$ and $D \in \sheaf A(\astar _x)$.
    \end{lemma}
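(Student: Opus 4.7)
The identity to be proved is a diagrammatic Duflo-type formula, and the plan is to deduce it from the Wheeling Theorem of Bar-Natan--Le--Thurston, combined with the standard interplay between the operations $\cup_x$, $\juxta_x$, $\inner_x$, the symmetrization map $\pi_x$, and the trace $\tr_x$.

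The first step is to pass the left-hand side through $\pi_x$. Under the trace this costs nothing: averaging over linear orders of the $x$-legs and then cyclically closing them up yields the same class as closing them up directly, so $\tr_x \circ \pi_x = \tr_x$ on the appropriate domain. Hence the left-hand side equals $\tr_x[\pi_x(\Omega_x \inner_x C)\juxta_x \pi_x(\Omega_x \inner_x D)]$, which is the form in which the Wheeling Theorem can be applied.

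The second step exploits that $\Omega_x$ is the exponential (under $\cup_x$) of a series of connected wheels. Consequently, $\Omega_x$ is group-like for the coproduct that splits the $x$-legs of a diagram into two disjoint groups, yielding a Sweedler-style expansion
$$\Omega_x \inner_x (C \cup_x D) \;=\; \sum (\Omega_x^{(1)} \inner_x C) \cup_x (\Omega_x^{(2)} \inner_x D).$$
The Wheeling Theorem, in the form
$$\pi_x(\Omega_x \cup_x A \cup_x B) \;=\; \pi_x(\Omega_x \cup_x A) \juxta_x \pi_x(\Omega_x \cup_x B),$$
then converts the internal $\cup_x$ appearing in the expansion into $\juxta_x$ after $\pi_x$, matching the structure of the left-hand side.

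The main obstacle is combinatorial bookkeeping: one must track how many $x$-legs of $\Omega_x$ contract into $C$, into $D$, or close through the cyclic trace, and verify that the numerical coefficients (in particular the $\tfrac12$ prefactor on struts that already appears in the proof of Lemma~\ref{lem:hk}), the AS and STU signs, and the multiplicities produced by the coproduct on $\Omega_x$ agree on the two sides. Once this accounting is carried out, the identity reduces to the Wheeling Theorem, with cyclic invariance of $\tr_x$ absorbing the residual non-commutativity of $\juxta_x$.
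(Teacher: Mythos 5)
Your instinct that this is a traced form of the Wheeling/Duflo isomorphism is correct, and the paper's own proof is indeed a run of D.~Thurston's ``$1+1=2$'' wheeling argument. But the central step of your plan is false, and it is false at exactly the point where the content of the lemma lies. The expansion
\[
\Omega_x \inner_x (C \cup_x D) \;=\; \sum \bigl(\Omega_x^{(1)} \inner_x C\bigr) \cup_x \bigl(\Omega_x^{(2)} \inner_x D\bigr)
\]
does not hold. The operator $\Omega_x\inner_x\cdot$ glues the legs of each wheel of $\Omega_x$ to legs of $C\cup_x D$ in all possible ways, and a single connected wheel may send some legs into $C$ and the rest into $D$; such bridging terms are connected diagrams meeting both $C$ and $D$ and cannot be written as a disjoint union of something glued to $C$ with something glued to $D$. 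The coproduct for which $\Omega_x$ is group-like splits connected components, whereas the Leibniz rule for $\inner_x$ with respect to $\cup_x$ is governed by the leg-splitting operator $\Delta^x_{x_1x_2}$, and $\Delta^x_{x_1x_2}\Omega_x\neq\Omega_{x_1}\cup\Omega_{x_2}$ precisely because of the mixed wheels. These cross terms are the whole point: the lemma asserts that, after $\tr_x$, they exactly compensate the discrepancy between fully symmetrizing the leftover $x$-legs (right-hand side) and juxtaposing them block-wise (left-hand side).

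Your invocation of Wheeling is also misstated: $\pi_x(\Omega_x\cup_x A\cup_x B)=\pi_x(\Omega_x\cup_x A)\juxta_x\pi_x(\Omega_x\cup_x B)$ is not the Wheeling theorem and is false; the correct form replaces the inner $\cup_x$ by $\inner_x$, i.e.\ $\pi_x(\Omega_x\inner_x(A\cup_x B))=\pi_x(\Omega_x\inner_x A)\juxta_x\pi_x(\Omega_x\inner_x B)$. Had you that statement in the generality needed here --- with the spectator label $y$ on $C$ --- the lemma would follow at once by applying $\tr_x$, and your Sweedler expansion would be superfluous; but the references prove Wheeling for $\mathcal A(\astar_x)\to\mathcal A(\orint_x)$ without extra colours, so the coloured, traced version is exactly what must be established. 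The paper does this by first applying $\tr_y$ (an isomorphism onto $\mathcal A(\orcir_x\,\orcir_y)$) and then using the bead-on-a-wire element: the comultiplicativity $\Delta^x_{x_1x_2}H_{z,x}=H_{z,x_1}\juxta_z H_{z,x_2}$ combined with $H_{z,x}=\tr_x[\exp({_x\astrut_z}\cup_x\Omega_x)]$ is what converts $\cup_x$ into $\juxta_x$ under the trace. Deferring all of this to ``combinatorial bookkeeping'' leaves the essential step unproved.
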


    \begin{proof}
        Let $H_{z, x} \in \mathcal A(\astar_z \orcir_x)$ be the Kontsevich integral of a bead $x$ on a wire $z$
        (see~\cite{thurston}).
        It has the property
        \[
            \Delta^x_{x_1, x_2} H_{z, x} = H_{z, x_1} \juxta_z H_{z, x_2}
            \in \mathcal A(\astar_z \, \orcir_{x_1} \,
            \orcir_{x_2}).
        \]

        Define an operator
        \[
            \Phi\colon \mathcal A(\astar_x \, \orcir_y) \to \mathcal A(\astar_z \, \orcir_y),
            \ X \mapsto \langle H_{z, x}, X\rangle_x := \langle \tilde H_{z, x}, X\rangle_x,
        \]
        where $\tilde H_{z, x} \in \mathcal A(\astar_x \, \astar_z)$ is any lift of $H_{z, x}$ under the map
        $\tr_x \circ \pi_x\colon \mathcal A(\astar_z \, \astar_x) \to \mathcal A(\astar_z \, \orcir_x)$. (That
        the operator is independent of the chosen lift follows from the same considerations
        as in the proof of~\cite[Lemma 5.4]{thurston}.)
        By \cite[Thm.\ 4]{thurston}, we moreover have
        \[
            H_{z, x} = \tr_x [\exp(_x\astrut_z \cup_x \Omega_x)].
        \]
        It follows that
        \[
            \Phi(C) = \langle (\exp(_x\astrut_z) \cup_x \Omega_x, C\rangle_x = \Delta_z^x (\Omega_x \inner_x C).
        \]
        Thus one has
        \[
            \begin{split}
                &\tr_y\tr_x [(\Omega_x \inner C) \juxta_x (\Omega_x \inner_x
                D)]\\
                 =& \tr_x [(\Omega_x \inner \tr_y(C)) \juxta_x (\Omega_x \inner_x D)] \\
                =&  \tr_x [\langle H_{x, x_1} \juxta_x H_{x, x_2},
                \Delta^x_{x_1} \tr_y(C) \otimes \Delta^x_{x_2}(D)\rangle_{x_1, x_2}] \\
                =&  \tr_x [\langle\Delta_{x_1, x_2}^z H_{x, z}, \Delta^x_{x_1}\tr_y(C) \otimes \Delta^x_{x_2}(D)\rangle_{x_1, x_2}] \\
                =&  \tr_x [\Delta^z_x \langle H_{z, x}, \tr_y (C) \cup_x D\rangle_x] \\
                =&  \tr_x [\Omega_x \inner_x (\tr_y(C) \cup_x D)] \\
                =&  \tr_y \tr_x [\Omega_x \inner_x (C \cup_x D)].
            \end{split}
        \]
        Finally recall from~\cite{thurston}
        that $\tr_y\colon\mathcal A(\orcir_x \, \astar_y) \to \mathcal A(\orcir_x \, \orcir_y)$
        is an isomorphism.
    \end{proof}

    \bibliographystyle{amsplain}
    \bibliography{my}

\providecommand{\bysame}{\leavevmode\hbox to3em{\hrulefill}\thinspace}
\providecommand{\MR}{\relax\ifhmode\unskip\space\fi MR }
\providecommand{\MRhref}[2]{%
  \href{http://www.ams.org/mathscinet-getitem?mr=#1}{#2}
}
\providecommand{\href}[2]{#2}
\begin{thebibliography}{10}

\bibitem{beauville:Progr}
A.~Beauville, \emph{{Some remarks on {K}\"ahler manifolds with {$c\sb{1}=0$}}},
  Classification of algebraic and analytic manifolds (Katata, 1982) Progr.
  Math.\ 39. 1--26, 1983.

\bibitem{beauville:kummer}
\bysame, \emph{{Vari\'et\'es k\"ahleriennes dont la premi\`ere classe de Chern
  est nulle}}, J. Diff. Geom. \textbf{18} (1983), 755--782.

\bibitem{bogomolov}
F.A. Bogomolov, \emph{{On the cohomology ring of a simple hyperk\"ahler
  manifold (on the results of Verbitsky)}}, Geom. Funct. Anal. \textbf{6}
  (1996), no.~4, 612--618.

\bibitem{buchweitz-flenner}
R.-O. Buchweitz and H.~Flenner, \emph{{The global decomposition theorem for
  Hochschild (co-)homology of singular spaces via the Atiyah--Chern
  character}}, Adv. Math. \textbf{217} (2008), no.~1, 243--281.

\bibitem{caldararu1}
A.~C{\u a}ld{\u a}raru, \emph{{The Mukai pairing, I: The Hochschild
  structure}}, {Preprint \tt arXiv:math.AG/0308079}.

\bibitem{caldararu:mukai2}
\bysame, \emph{{The Mukai pairing., II: The Hochschild--Kostant--Rosenberg
  isomorphism}}, Adv. Math. \textbf{194} (2005), no.~1, 34--66.

\bibitem{vdbergh}
M.~Van den Bergh and D.~Calaque, \emph{{Hochschild cohomology and Atiyah
  classes}}, {Preprint \tt arXiv:0708.2725}.

\bibitem{fujiki}
A.~Fujiki, \emph{{On the de Rham cohomology group of a compact K\"ahler
  symplectic manifold}}, {Algebraic geometry, Proc. Symp., Sendai/Jap. 1985,
  Adv. Stud. Pure Math. 10, 105--165.}, 1987.

\bibitem{ghj}
M.~Gross, D.~Huybrechts, and D.~Joyce, \emph{{Calabi--Yau manifolds and related
  geometries. Lectures at a summer school in Nordfjordeid, Norway, June 2001}},
  {Universitext. Berlin: Springer.}, 2003.

\bibitem{hitchin-sawon}
N.~Hitchin and J.~Sawon, \emph{{Curvature and characteristic numbers of
  hyper-K\"ahler mani\-folds}}, Duke Math. J. \textbf{106} (2001), no.~3,
  599--615.

\bibitem{huybrechts:fm}
D.~Huybrechts, \emph{{Fourier-Mukai transforms in algebraic geometry}}, {Oxford
  Mathematical Monographs.}, 2006.

\bibitem{huybrechts:projective}
D.~Huybrechts and R.~Thomas, \emph{{$\mathbb P$-objects and autoequivalences of
  derived categories}}, Math. Res. Lett. \textbf{13} (2006), no.~1, 87--98.

\bibitem{kapranov}
M.~Kapranov, \emph{{Rozansky-Witten invariants via Atiyah classes}}, Compositio
  Math. \textbf{115} (1999), 71--113.

\bibitem{kontsevich}
M.~Kontsevich, \emph{{Deformation quantization of Poisson manifolds}}, Lett.
  Math. Phys. \textbf{66} (2003), no.~3, 157--216.

\bibitem{namikawa}
Y.~Namikawa, \emph{{Counter-example to global Torelli problem for irreducible
  symplectic manifolds}}, Math. Ann. \textbf{324} (2002), no.~4, 841--845.

\bibitem{nieper:book}
M.~Nieper-Wi{\ss}kirchen, \emph{Chern numbers and {R}ozansky--{W}itten
  invariants of compact hyper-{K}\"ahler manifolds}, World Scientific
  Publishing Co. Inc., 2004.

\bibitem{orlov}
D.O. Orlov, \emph{{Derived categories of coherent sheaves and equivalences
  between them}}, Russ. Math. Surv. \textbf{58} (2003), no.~3, 511--591.

\bibitem{ploog:thesis}
D.~Ploog, \emph{{Equivariant autoequivalences for finite group actions}}, Adv.
  Math. \textbf{216} (2007), no.~1, 62--74.

\bibitem{ramadoss}
A.~Ramadoss, \emph{{The relative Riemann--Roch theorem from Hochschild
  homology}}, {Preprint \tt arXiv:math/0603127}.

\bibitem{rowi}
J.~Roberts and S.~Willerton, \emph{{On the Rozansky-Witten weight systems}},
  {Preprint \tt arXiv:math.AG/0602653}.

\bibitem{thurston}
D.~Thurston, \emph{{A Diagrammatic analogue of the Duflo isomorphism}}, Ph.D.
  thesis, {University of California at Berkeley}, 2000.

\bibitem{ueno}
K.~Ueno, \emph{{Classification theory of algebraic varieties and compact
  complex spaces}}, {Lecture Notes in Mathematics. 439. Springer-Verlag.},
  1975.

\end{thebibliography}

\end{document}